\documentclass[11pt]{article}

\topmargin  = 0 in \oddsidemargin = 0.25 in
\setlength{\textheight}{8.6 in} \setlength{\textwidth}{6 in}
\setlength{\topmargin}{-0.8cm}
\setlength{\unitlength}{1.0 mm}

\usepackage{pifont}  

\usepackage{amsmath}
\usepackage{amsfonts}
\usepackage{color}
\usepackage{float}   
\usepackage{amssymb}
\usepackage{graphicx,bm}
\usepackage{enumerate}
\usepackage{amsthm,amscd}
\usepackage[all]{xy}

 \allowdisplaybreaks

 \usepackage[colorlinks, linkcolor=red, anchorcolor=green, citecolor=blue]{hyperref}

\newtheorem{theorem}{Theorem}[section]
\newtheorem{corollary}[theorem]{Corollary}

\newtheorem{conjecture}[theorem]{Conjecture}

\newtheorem{lemma}[theorem]{Lemma}

\newtheorem{problem}[theorem]{Problem}

\newcommand{\blue}{\color{blue}}


\begin{document}

\title{Spectral supersaturation: Triangles and bowties}

\author{Yongtao Li$^{1}$ 
\quad  
Lihua Feng$^{1,}$\footnote{Corresponding authors. This paper was published on European Journal of Combinatorics. \\ 
E-mail addresses:  \url{ytli0921@hnu.edu.cn} (Y. Li), 
\url{fenglh@163.com} (L. Feng), 
\url{ypeng1@hnu.edu.cn} (Y. Peng)}  
\quad 
Yuejian Peng$^{2,*}$  \\
{\small $^{1}$School of Mathematics and Statistics,  Central South University,  Changsha, China} \\ 
 {\small $^{2}$School of Mathematics, Hunan University, Changsha, China}  
 }

\date{\today}
 
\maketitle

\vspace{-0.8cm}

\begin{abstract}
A classical result of Erd\H{o}s and Rademacher (1955) demonstrates a fundamental supersaturation phenomenon in extremal combinatorics: every graph on $n$ vertices with more than $\lfloor n^2/4\rfloor$ edges contains at least $\lfloor n/2 \rfloor$ triangles.  
Let $\lambda (G)$ be the spectral radius of the adjacency matrix of a graph $G$. 
Recently, Ning and Zhai (2023) proved that 
every $n$-vertex graph $G$ with $\lambda (G) \ge 
\sqrt{\lfloor n^2/4\rfloor}$ contains at least $\lfloor n/2\rfloor -1$ triangles, 
unless $G$ is a balanced complete bipartite graph $K_{\lceil \frac{n}{2} \rceil, \lfloor \frac{n}{2} \rfloor}$. 
The aim of this paper is two-fold. 
Using a different approach which we term 
the supersaturation-stability method, 
we prove a stability variant of the Ning--Zhai result by showing that such a graph $G$ contains at least $n-3$ triangles if no vertex lies in all triangles of $G$. This bound is the best possible and it could also be viewed as a spectral analogue of a theorem of  Xiao and Katona (2021), which guarantees $n-2$ triangles under the assumption that $e(G)> \lfloor n^2/4\rfloor$ and  no vertex is in all triangles of $G$. 

The second part concerns with the spectral supersaturation for the bowtie, which consists of two triangles sharing a vertex. 
Erd\H{o}s, F\"{u}redi, Gould and Gunderson (1995) proved 
that every $n$-vertex graph with more than $\lfloor n^2/4\rfloor +1$ edges contains a bowtie.   
The spectral supersaturation problem has not been investigated for non-color-critical substructures in graphs with given order. 
We give the first such result by counting bowties.   
Let $K_{\lceil \frac{n}{2} \rceil, \lfloor \frac{n}{2} \rfloor}^{+2}$ 
be the graph obtained from $K_{\lceil \frac{n}{2} \rceil, \lfloor \frac{n}{2} \rfloor}$ by embedding two disjoint edges in the vertex part of size $\lceil \frac{n}{2} \rceil$.  We show that if $n\ge 8.8 \times 10^6$ and 
$\lambda (G)\ge \lambda (K_{\lceil \frac{n}{2} \rceil, \lfloor \frac{n}{2} \rfloor}^{+2})$, then $G$ has at least $\lfloor \frac{n}{2} \rfloor$  bowties,  
and $K_{\lceil \frac{n}{2} \rceil, \lfloor \frac{n}{2} \rfloor}^{+2}$ is the unique spectral extremal graph.  
This gives a spectral correspondence of a result of Kang, Makai and Pikhurko (2020). 
The method developed in our paper could be helpful in establishing the spectral results for counting other substructures, even for non-color-critical graphs.  
 \end{abstract}

{{\bf Key words:} Supersaturation; spectral  radius; triangle; bowtie. }

{{\bf 2010 Mathematics Subject Classification.}  05C35; 05C50.}


\section{Introduction} 
Extremal combinatorics is one of the essential branches of 
discrete mathematics and it has experienced an impressive growth 
in the last few decades. 
Generally speaking, it deals with the problem of determining or estimating the maximum or minimum possible size of a combinatorial object that satisfies certain restrictions.  
Such types of problems are often related to other mathematical areas, such as  theoretical computer science, coding theory, 
discrete geometry, 
ergodic theory and harmonic analysis. 
One of the most significant problems in extremal combinatorics  is the famous Tur\'{a}n type problem, 
which can be roughly formulated as follows: 
given a `forbidden' substructure, 
what is the maximum number of edges in a graph 
that does not contain the specific substructure.  
The first study of the Tur\'{a}n type problem dates back perhaps to a classical result of Mantel, 
which asserts that each graph on $n$ vertices 
with more than $\lfloor n^2/4\rfloor $ edges must contain a triangle. 
The bound is tight as witnessed by 
a balanced complete bipartite graph $T_{n,2}$. 
The Tur\'{a}n type problem could be viewed as 
 a starting point of extremal graph 
theory and it has inspired extensive research in the past hundred years. 
A large number of results have  
been proved for a range of different substructures using a wide variety of different methods; 
see, e.g., \cite{Bollobas78}. 

\medskip 
A result of Rademacher (unpublished, see Erd\H{o}s \cite{Erd1955,Erdos1964}) shows that 
any $n$-vertex graph with more than $\lfloor n^2/4\rfloor $ edges has at least $\lfloor n/2\rfloor$ triangles. 
 This bound can be achieved 
 by adding an edge to the larger vertex part of $T_{n,2}$. 
 This phenomenon is known as `supersaturation' in the literature.  
Subsequently, 
Lov\'{a}sz and Simonovits \cite{LS1975,LS1983}  proved that 
if $1\le t <{n}/{2}$ is an integer and $G$ is an $n$-vertex graph with
$e(G)\ge \lfloor {n^2}/{4} \rfloor + t$,  
then $G$ contains at least $t \lfloor {n}/{2}\rfloor $
triangles.   
An analogous question for other classes of graphs 
has been studied extensively in extremal graph theory.  
We refer the readers to 
the works of Mubayi \cite{Mub2010}, and 
Pikhurko and Yilma \cite{PY2017} 
for counting color-critical graphs, 
such as cliques, odd cycles, etc. 
The stability results on Erd\H{o}s--Rademacher's problem 
can be found in \cite{XK2021,LM2022-Erd-Rad,BC2023}.

\medskip 
Another interesting research direction 
studies which other substructures 
must appear in a graph with more than $\lfloor n^2/4\rfloor $ edges.  
Apart from the number of triangles, 
 a similar extremal graph problem concerns the minimum 
 number of edges that 
are contained in a triangle. 
For instance, every $n$-vertex graph with more than 
 $ \lfloor {n^2}/{4} \rfloor$ edges has 
 at least $2\lfloor n/2\rfloor +1$ edges contained in triangles;  see, e.g.,  
\cite{EFR1992,FM2017,GL2018}. 
Other examples are results on finding many cliques 
that share one or more vertices and edges, 
such as,  
every graph on $n$ vertices with more than 
 $ \lfloor {n^2}/{4} \rfloor$ edges contains a book of size 
 $n/6$, that is,  $n/6$ 
 triangles sharing a common edge; see \cite{BN2005,BN2011,CFS2020-Siam}. 
 Moreover,  each $n$-vertex graph with more than $ \lfloor {n^2}/{4} \rfloor +1$ edges contains a bowtie, i.e., 
 two triangles sharing a vertex; 
see \cite{Erdos95, Chen03,KMP2020}.

\subsection{Counting triangles via the spectral radius}

In the past few years, 
an extremely popular trend has been the expansion of 
spectral extremal graph theory, in which the structural properties of a graph are studied by means of eigenvalues of the associated matrices.  
In this paper, we focus mainly on 
the adjacency matrix $A(G)=[a_{i,j}]_{n \times n}$, 
where $a_{i,j}=1$ if $v_i$ and $v_j$ are adjacent in $G$, 
and $a_{i,j}=0$ otherwise. 
The spectral radius $\lambda (G)$ of $G$
 is defined as the maximum modulus 
of eigenvalues of $A(G)$. 

\medskip 
The spectral Tur\'{a}n problem 
studies the maximum spectral radius of 
a graph that forbids specific subgraphs.  
For example, a spectral Mantel theorem due to Nikiforov \cite{Niki2007laa2} states that 
a graph $G$ of order $n$ with $\lambda (G)> \lambda (T_{n,2})$ contains a triangle. 
This spectral version extends Mantel's theorem  
by invoking the fact  $\lambda (G) \ge 2m/n$. 
The spectral Tur\'{a}n problems have attracted considerable attention in the last twenty years, continuing to be a fascinating and long-standing topic in the field; see, e.g., 
\cite{Wil1986,Niki2002cpc,Niki2007laa2,LP2022second,ZhangST2024} for graphs without cliques, 
\cite{LNW2021,ZS2022dm,LFP2023-solution,LLH2022} for non-bipartite graphs without triangles,  
\cite{Niki2009cpc} for a spectral Erd\H{o}s--Stone--Bollob\'{a}s theorem,  
\cite{Niki2009jgt} for a spectral Erd\H{o}s--Simonovits stability theorem, 
\cite{BG2009,Niki2010laa} for a spectral  K\H{o}vari--S\'{o}s--Tur\'{a}n theorem, 
\cite{CDT2023} for a spectral Erd\H{o}s--S\'{o}s theorem, 
\cite{LLP2024-AAM} for a spectral Erd\H{o}s--Rademacher theorem, 
\cite{LFP2024-triangular} for a spectral Erd\H{o}s--Faudree--Rousseau theorem, 
\cite{Niki2021,ZL2022jgt} for books, 
\cite{CDT21,ZHL2021} for wheels, 
\cite{Wang2022} for a spectral extremal result on a class of graphs, 
\cite{CLZ2019-GC} for disjoint paths, 
\cite{NWK2023} for disjoint cliques, 
 \cite{LL2024} for 
disjoint color-critical graphs, 
\cite{TT2017,LN2021outplanar} for planar and outerplanar graphs, and 
\cite{FLS2024, ZL2024} for disjoint cycles in planar graphs.

\medskip 
There are a large number of 
articles involving the results on counting substructures in the literature.  
Nevertheless, it is extremely \textbf{rare} to establish the counting results 
in terms of the spectral radius of a graph. 
In 2007, Bollob\'{a}s and Nikiforov \cite{BN2007jctb}  showed that every $n$-vertex graph $G$ contains 
at least $  (\frac{\lambda (G)}{n} -1 + \frac{1}{r}) 
\frac{r(r-1)}{r+1}(\frac{n}{r})^{r+1} $ copies of $K_{r+1}$. 
This result reveals the spectral supersaturation for complete graphs. 
Namely, for any $r\ge 2$ and $\varepsilon>0$, if  $G$ is a graph with 
$\lambda (G) \ge (1-\frac{1}{r} + \varepsilon)n$, then $G$ has at least $\delta (r,\varepsilon)\cdot n^{r+1}$ copies of $K_{r+1}$, 
where $\delta (r,\varepsilon):=\frac{r-1}{(r+1)r^r} \varepsilon$ 
is linearly dependent on $\varepsilon$. 
And beyond that, 
a recent result due to
 Ning and Zhai \cite{NZ2021} 
established an exact spectral supersaturation 
for triangles.

\begin{theorem}[Ning--Zhai, 2023] \label{thmNZ2021}
If $G$ is an $n$-vertex graph  with 
\[   \lambda (G) \ge \lambda (T_{n,2}), \]  
then  $G$ contains at least 
$ \lfloor \frac{n}{2} \rfloor -1 $ triangles, 
unless $G=T_{n,2}$.  
\end{theorem}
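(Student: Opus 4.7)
My plan is to combine three ingredients: Nikiforov's spectral Mantel theorem \cite{Niki2007laa2}, the Lov\'asz--Simonovits supersaturation theorem \cite{LS1975,LS1983} on triangle counts, and the trace identity $\tr(A^3) = 6\,t(G)$ relating eigenvalues to triangles; the proof will then split according to the edge count $m := e(G)$. To begin, if $G$ were triangle-free then Nikiforov's theorem would force $\lambda(G)\le \lambda(T_{n,2})$ with equality iff $G=T_{n,2}$, contradicting the hypothesis. Hence $G$ contains at least one triangle, and it remains to produce the missing $\lfloor n/2\rfloor -2$ copies.

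Writing $\lambda := \lambda(G)$, from $\sum_{i=1}^n \lambda_i^3 = 6\,t(G)$ together with $|\lambda_i| \le \lambda$ and $\sum_i \lambda_i^2 = 2m$, one gets $\sum_{i\ge 2}|\lambda_i|^3 \le \lambda(2m-\lambda^2)$, hence
$$ 6\,t(G) \;=\; \lambda^3 + \sum_{i\ge 2}\lambda_i^3 \;\ge\; \lambda^3 - \lambda(2m-\lambda^2) \;=\; 2\lambda(\lambda^2 - m). $$
Inserting $\lambda^2 \ge \lfloor n^2/4\rfloor$ and $\lambda \ge (n-1)/2$ into this gives $t(G) \ge \lfloor n/2\rfloor -1$ whenever $m \le \lfloor n^2/4\rfloor -3$. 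At the other extreme, if $m \ge \lfloor n^2/4\rfloor +1$, then Lov\'asz--Simonovits directly gives $t(G) \ge \lfloor n/2\rfloor$, which is strictly stronger than what is claimed.

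This leaves the delicate intermediate window $\lfloor n^2/4\rfloor -2 \le m \le \lfloor n^2/4\rfloor$, where both of the above bounds collapse. For this regime I would invoke a spectral stability argument: since $\lambda(G)$ almost saturates the spectral Mantel bound and $G \neq T_{n,2}$, $G$ must be structurally very close to $T_{n,2}$. Using the positive Perron eigenvector to pick out a near-bipartition $(A,B)$ on which $G$ nearly agrees with $K_{|A|,|B|}$, one argues that $G$ is obtained from $T_{n,2}$ by inserting a bounded number of intra-part edges and deleting a comparable number of crossing edges. Each such intra-part edge $uv$ then lies in a common neighborhood across the bipartition of size close to $\lfloor n/2\rfloor$, yielding nearly $\lfloor n/2\rfloor$ triangles through $uv$, while the $O(1)$ crossing deletions destroy only a handful of them.

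The main obstacle is sharpening this intermediate-range count from $\lfloor n/2\rfloor - O(1)$ down to exactly $\lfloor n/2\rfloor -1$ and isolating the extremal graph. This will require a careful inspection of the eigenvalue equation $\lambda x_u = \sum_{v\sim u} x_v$ at a Perron-maximal vertex $u$, a parity-based split on $n$, and a tight spectral comparison ruling out all near-extremal swap modifications of $T_{n,2}$. Pinning down exactly the threshold value and the unique extremal graph through this residual case analysis is the most technical part of the proof.
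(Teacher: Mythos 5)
This theorem is not proved in the paper under review---it is imported verbatim from Ning and Zhai \cite{NZ2021}, so there is no ``paper's own proof'' to compare against. Evaluating the proposal on its own merits: the framework is sound, and the two extreme regimes are handled correctly. Your derivation of
\[
6\,t(G) \;\ge\; 2\lambda\bigl(\lambda^2-m\bigr)
\]
is precisely the spectral triangle-counting inequality that appears as Lemma~\ref{thm-BN-CFTZ-NZ}, and the argument that this forces $t(G)\ge \lambda \ge \lfloor n/2\rfloor - 1$ when $m \le \lfloor n^2/4\rfloor - 3$ is correct, as is the application of Lov\'asz--Simonovits when $m \ge \lfloor n^2/4\rfloor + 1$.

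However, there is a genuine gap. In the window $\lfloor n^2/4\rfloor - 2 \le m \le \lfloor n^2/4\rfloor$, the quantity $\lambda^2 - m$ can be as small as $0$, so the spectral count gives only $t(G)\gtrsim \tfrac{2}{3}\lambda$ or nothing at all, and Lov\'asz--Simonovits is not applicable because $m$ is at or below the Tur\'an threshold. Your sketch that ``$G$ is structurally very close to $T_{n,2}$, insert a bounded number of intra-part edges, delete a comparable number of crossing edges, each intra-part edge yields nearly $\lfloor n/2\rfloor$ triangles'' is exactly the hard part of the theorem, and it is not carried out: you neither quantify ``bounded'' and ``comparable,'' nor control the interaction between the inserted intra-part edges and the deleted crossing edges (a deleted crossing edge could destroy many of the triangles created by an intra-part edge if they are incident), nor rule out that $G$ instead concentrates its excess spectral radius in an unbalanced or locally dense configuration with few triangles. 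In particular, the equality case and the precise constant $\lfloor n/2\rfloor - 1$ (as opposed to $\lfloor n/2\rfloor - O(1)$) depend entirely on this missing analysis. As you yourself acknowledge, this ``residual case analysis is the most technical part of the proof''---so what you have written is a correct reduction plus an honest admission that the central step is left unproved, not a complete proof.
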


In the sequel, we shall make an effort
 to the study on spectral counting problems. 
 For a graph $G$ with $\lambda (G)\ge 
 \lambda (T_{n,2})$, 
 we establish structural results and improve the bound on the number of triangles.   
In particular, we give a spectral stability by showing that if 
a graph $G$ satisfies $\lambda (G) \ge \lambda(T_{n,2})$ 
and $G$ has few triangles, 
then it is close to a complete bipartite graph. 
With this structural result, we present a stability variant on Theorem \ref{thmNZ2021}.

\subsection{Extremal problems for friendship graphs} 

\label{sub-sec1-2}

A graph $G$ is $F$-free if it does not contain a subgraph isomorphic to $F$.  
 By convention, we write $\mathrm{ex}(n,F)$ for 
the maximum number of edges in an $F$-free graph of order $n$. 
An $F$-free graph 
with the maximum number of edges 
is called an extremal graph for $F$. 
We denote by $\mathrm{Ex}(n,F)$ 
the set of all $n$-vertex extremal graphs for $F$. 

\medskip 
Let $F_k$ be the graph of order $2k+1$ 
consisting of $k$ triangles
that intersect in exactly one common vertex.
This graph is known as the friendship graph
because it is the unique extremal graph of the
well-known Friendship Theorem\footnote{It states that if $G$ is a finite graph in which any two vertices have
precisely one common neighbor, then there is a vertex which is adjacent to
all other vertices and  $G=F_k$ for some $k$; see \cite[page 307]{AZ2014}}. 
In 1995, Erd\H{o}s, F\"{u}redi,
Gould and Gunderson \cite{Erdos95} proved that  
for every $k \geq 1$ and $n\geq 50k^2$, we have 
\[ \mathrm{ex}(n, F_k)= \left\lfloor \frac {n^2}{4}\right \rfloor+ \left\{
\begin{array}{ll}
k^2-k \quad~~  \mbox{if $k$ is odd; } \\
k^2-\frac32 k \quad \mbox{if $k$ is even}.
\end{array}
\right. \] 
Moreover, the extremal graphs in $\mathrm{Ex}(n,F_k)$  
 are determined in \cite{Erdos95}. For odd $k$,
the extremal graphs are constructed from $T_{n,2}$
by embedding two vertex-disjoint copies of $K_k$ in one side. 
For even $k$, the extremal graphs
are obtained by taking $T_{n,2}$ and embedding
any of the graphs with $2k-1$ vertices, $k^2- {3}k/2$ edges and maximum degree $k-1$ in one side.

\medskip   
In 2020, Cioab\u{a}, Feng,
Tait and Zhang \cite{CFTZ20} showed that 
for every fixed $k\ge 2$ and sufficiently large $n$, 
if $G$ is an $F_k$-free graph on $n$ vertices 
with the maximum spectral radius, then
$  G \in  \mathrm{Ex}(n,F_k)$. 
This work introduces a valuable approach to characterize the spectral extremal graphs when we forbid non-bipartite graphs.  
Furthermore, Zhai, Liu and Xue \cite{ZLX2022} provided a complete 
characterization by determining the unique spectral extremal graph for $F_k$, 
i.e., they found out the unique embedded graph among 
all graphs  of order $2k-1$ with $k^2-3k/2$ edges 
and maximum degree $k-1$. 
In \cite{CFTZ20}, the celebrated triangle removal lemma\footnote{It states that for all $\varepsilon >0$, there exists $\delta >0$ such that 
any graph on $n$ vertices with at most $\delta n^3$ triangles can be made triangle-free by removing at most $\varepsilon n^2$ edge. 
The best bound on $\delta^{-1}$ 
is a tower function with base $2$ and logarithmic height in $\varepsilon^{-1}$; see \cite{CF2013}. So, it leads to a worse bound on the order of the  graph.} was applied so that 
it must require the order $n$ to be sufficiently large.  
Recently, 
Li, Feng and Peng \cite{LFP2024-triangular}  presented  
a  new simple method that avoids the use of triangle removal lemma and 
shows the main result \cite{CFTZ20} holding for every $n\ge (21k)^4$.

 \begin{figure}[htbp]
\centering
\includegraphics[scale=0.18]{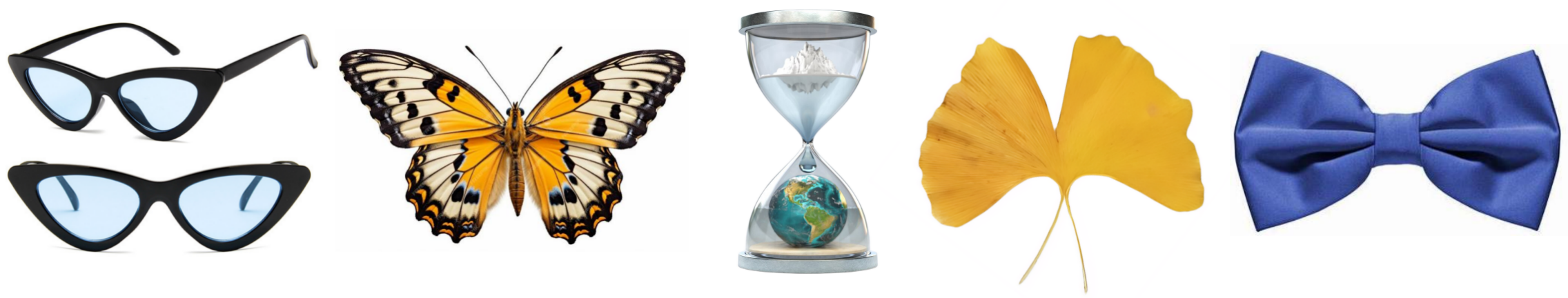} 
\vspace{-4mm}
\caption{Some pictures whose shapes resemble $F_2$.}
\label{fig-bowtie}
\end{figure}

\medskip 
 In this paper, we investigate 
 the spectral supersaturation for the graph $F_2$, which consists of two triangles 
merged at a vertex. 
The graph $F_2$ is also known as the {\it bowtie}.  
The bowtie-free graphs are  extremely 
important in several areas of graph theory,
such as in the Ramsey problem \cite{HN2018} and the 
supersaturation problem \cite{KMP2020}. 
In fact, the Tur\'{a}n number of bowtie
was also obtained by
Erd\H{o}s, F\"{u}redi, Gould and Gunderson \cite{Erdos95} for every $n\ge 5$.

\begin{theorem}[Erd\H{o}s et al., 1995] \label{EFGG-F2}
For $n\ge 5$,  we have 
\[  \mathrm{ex}(n,F_2) 
= \left\lfloor \frac{n^2}{4} \right\rfloor +1. \]
The extremal graphs are obtained from $T_{n,2}$ by adding 
an edge to one part.  
\end{theorem}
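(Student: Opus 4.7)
The argument splits into a lower bound construction and an upper bound structural analysis.

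For the \emph{lower bound}, consider the graph $H$ obtained from $T_{n,2}$ by adding one edge $e=uu'$ inside the larger part. Then $e(H)=\lfloor n^2/4\rfloor+1$, and every triangle of $H$ must use $e$ (since $T_{n,2}$ is bipartite and $e$ is the only intra-part edge), so any two triangles share both endpoints of $e$ and in particular never share exactly one vertex. Hence $H$ is $F_2$-free.

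For the \emph{upper bound}, suppose for contradiction that $G$ is an $n$-vertex $F_2$-free graph with $e(G)\ge \lfloor n^2/4\rfloor+2$. The key observation is that $F_2 \cong K_1 + 2K_2$, so $G$ contains $F_2$ iff some vertex $v$ has a matching of size $2$ in $G[N(v)]$; equivalently, $F_2$-freeness means that for every $v$ the edges of $G[N(v)]$ either are absent, form a single star, or form a triangle. Since $e(G)>\lfloor n^2/4\rfloor$, the Erd\H{o}s--Rademacher result recalled in the introduction guarantees at least $\lfloor n/2\rfloor$ triangles, so we may fix one triangle $T=xyz$ in $G$. By $F_2$-freeness, every other triangle of $G$ either shares an edge with $T$ or is vertex-disjoint from $T$, and I would split along this dichotomy.

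If no triangle is vertex-disjoint from $T$, then $G-\{x,y,z\}$ is triangle-free, and combining the Mantel bound on $G-\{x,y,z\}$ with the degree bounds at $x,y,z$ forced by the neighborhood-star condition contradicts the assumed edge count. If instead there exists a triangle $T'$ vertex-disjoint from $T$, then for every vertex $w$ the $F_2$-free condition forbids $w$ from simultaneously having two neighbors in $T$ and two neighbors in $T'$, for otherwise $N(w)$ would contain two disjoint edges (one inside each triangle). Applying this to $w\in V(T)\cup V(T')$ bounds $e(V(T),V(T'))\le 3$, while applying it to outside vertices bounds the per-vertex contribution from $V(G)\setminus(V(T)\cup V(T'))$; combining with the $F_2$-free (hence edge-sparse) subgraph on $V(G)\setminus(V(T)\cup V(T'))$ yields the required contradiction. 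The uniqueness statement then follows by tracking equality cases, which force precisely the bipartition structure of $T_{n,2}$ together with a single extra intra-part edge.

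The chief technical obstacle is the disjoint-triangle case: locally many edges are permitted around $V(T)\cup V(T')$, so the bookkeeping must be meticulous in order to reclaim the surplus of two edges above the Mantel threshold. The neighborhood-matching characterization is the central tool, and a short induction on $n$ (or iterative application of the same dichotomy on $G-\{x,y,z\}$) would likely be needed to make the bounds tight.
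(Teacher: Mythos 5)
The paper quotes this theorem from Erd\H{o}s--F\"{u}redi--Gould--Gunderson without giving a proof, so there is no author argument to compare against; I am evaluating your sketch on its own.

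Your lower-bound construction is correct, and the reformulation of $F_2$-freeness as ``for every $v$, $G[N(v)]$ has matching number at most $1$'' is both correct and the right tool. The dichotomy (every other triangle shares an edge with $T$, or some triangle $T'$ is vertex-disjoint from $T$) is also valid, since two triangles sharing exactly one vertex form a bowtie.

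However, the upper bound does not close as sketched, and the gap is quantitative rather than cosmetic. In the disjoint-triangle case, set $A=V(T)\cup V(T')$ and $B=V(G)\setminus A$. Your bounds give $e(A)\le 9$ (three edges per triangle plus $e(V(T),V(T'))\le 3$), $e(B)\le\lfloor(n-6)^2/4\rfloor+1$ by induction, and, since each $w\in B$ has $\min\bigl(d_{V(T)}(w),d_{V(T')}(w)\bigr)\le 1$, at most $d_A(w)\le 4$. Summing, $e(G)\le 9+4(n-6)+\lfloor(n-6)^2/4\rfloor+1\approx \tfrac{n^2}{4}+n-5$, which exceeds the target $\lfloor n^2/4\rfloor+1$ by roughly $n$. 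So ``meticulous bookkeeping'' along these lines is not a matter of tidying constants: you need a structurally stronger statement, e.g.\ that all vertices of $B$ with two neighbours in $V(T)$ lie over a common edge of $T$ (otherwise two such vertices already give a bowtie at the shared vertex), that this book is an independent set, that it sends no edges to the analogous book over $T'$, etc. The same problem lurks in the no-disjoint-triangle case: a single large book over one edge of $T$ makes $e(V(T),B)$ nearly $2(n-3)$, which the ``neighbourhood-star condition at $x,y,z$'' alone does not forbid, so the Mantel-plus-degree bound you describe is again off by a linear term. Finally, the induction and the asymptotic bounds both fail for small $n$ (e.g.\ $n=5,6$), and the stated theorem is for all $n\ge 5$, so those cases require a separate check. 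The approach is reasonable in outline, but the central estimates you assert do not yet yield a contradiction and would need to be replaced by a genuinely finer structural analysis of the books around $T$ and $T'$.
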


In 2020, Kang, Makai and Pikhurko \cite{KMP2020} 
proved that the only way to construct a graph of order $n$ with more than $\lfloor n^2/4\rfloor +1$ edges and containing as few bowties as possible is to add additional edges to the Tur\'{a}n graph $T_{n,2}$.   

\begin{theorem}[Kang--Makai--Pikhurko, 2020] \label{thm-KMP}
There exists $\delta >0$ such that 
if $1\le q\le \delta n^2$ and 
$H$ is a graph on $n\ge 1/ \delta$ vertices with 
$ \lfloor n^2/4 \rfloor+ q$ edges, 
and $H$ has the minimum number of copies of the bowtie $F_2$, 
then $H$ contains $T_{n,2}$ as a subgraph. 
\end{theorem}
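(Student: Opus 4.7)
My plan is a supersaturation-stability argument: first show that any bowtie-minimizer is close to $T_{n,2}$, then clean up via careful counting to force strict containment.

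I would begin with an explicit construction that bounds the minimum bowtie count from above. Take $T_{n,2}$ with parts $A\cup B$, $|A|=\lceil n/2\rceil$, and add $q$ edges inside $A$ spread as evenly as possible (a matching when $q\le \lfloor |A|/2\rfloor$, and a nearly-regular graph on $A$ otherwise). Every bowtie in this graph is centered at a vertex of $B$ and consists of two vertex-disjoint added edges, yielding at most $\lceil n/2\rceil\binom{q}{2}+O(q^2)$ copies of $F_2$. Hence any minimizer $H$ has at most this many bowties, and in particular at most $O(nq^2)\le O(\delta^2 n^5)$ copies of $F_2$.

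Next I would run a stability argument. If $H$ were $\varepsilon n^2$-far from being bipartite for some $\varepsilon\gg\sqrt{\delta}$, then by a quantitative Erd\H{o}s--Simonovits-type supersaturation count $H$ would contain $\Omega_\varepsilon(n^3)$ triangles; pairing these triangles via shared vertices and applying convexity would give $\Omega(\varepsilon^2 n^5)$ bowties, contradicting the upper bound from the construction. Thus $H$ admits a max-cut bipartition $V(H)=X\cup Y$ with $a=O(\sqrt{\delta}\,n^2)$ within-part edges and $m=O(\sqrt{\delta}\,n^2)$ missing crossing edges, where $a=q+m+O(1)$ by edge counting.

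The main obstacle is the cleaning step, namely forcing $m=0$. I would iterate a finer stability bound on the bowtie count in terms of $a$ and $m$, roughly of the form $\#F_2(H)\ge (\lceil n/2\rceil-O(a))\binom{a}{2}-O(a^2)$, obtained by convexity of the pairwise triangle-overlap count and a careful accounting of within-part triangles that pair with crossing triangles. Substituting $a=q+m$ and comparing with the construction bound $\lceil n/2\rceil\binom{q}{2}+O(q^2)$ forces $m=0$ once $n$ is large relative to $1/\delta$. The delicate point is quantifying the convexity rigorously: one must show that concentrating within-part edges at a few vertices creates strictly more bowties (through extra triangles interior to the part pairing with crossing triangles), so that the balanced spread configuration is worst-case for supersaturation. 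Once $m=0$, $H$ contains $K_{|X|,|Y|}$ as a subgraph, and minimality of the bowtie count forces $|X|$ and $|Y|$ to differ by at most $1$, yielding $T_{n,2}\subseteq H$.
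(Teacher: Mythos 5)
This statement is not proved in the paper: it is cited verbatim from Kang, Makai, and Pikhurko [KMP2020], and the authors themselves point out (after Lemma 5.3) that the constant $\delta$ in this theorem is ``a fuzzy small constant,'' which is precisely why they avoid invoking it. So there is no in-paper proof to compare your attempt against; what follows is an assessment of your proposal on its own terms.

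Your high-level plan (explicit construction for an upper bound, supersaturation for approximate bipartiteness, then a quantitative cleaning step) is the natural framework, but the hard part is exactly the part you describe loosely. Several concrete issues. First, a small slip in the construction: with the $q$ edges placed inside the part of size $\lceil n/2\rceil$, the bowties are centered in the \emph{other} part, giving $\lfloor n/2\rfloor\binom{q}{2}$, not $\lceil n/2\rceil\binom{q}{2}$; and once $q > n/4$ a matching does not fit in one part, so the $\binom{q}{2}$ count of disjoint pairs degrades and bowties centered inside $A$ start to contribute on the same order. Second, passing from $\Omega_\varepsilon(n^3)$ triangles to $\Omega(\varepsilon^2 n^5)$ bowties is not just ``pairing triangles via shared vertices'': pairs of triangles sharing an edge are not bowties, and one must explicitly subtract the $\sum_{uv}\binom{\mathrm{codeg}(u,v)}{2}$ term and show it is lower order; your sketch omits this. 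Third, and most seriously, the ``finer stability bound'' $\#F_2(H)\ge(\lceil n/2\rceil - O(a))\binom{a}{2}-O(a^2)$ is not a valid a priori lower bound: if the within-part edges form a star, the number of disjoint pairs is zero, so $\binom{a}{2}$ is the wrong quantity to have on the right; what actually needs proving is that the near-matching configuration \emph{minimizes}, i.e., that concentration is penalized by bowties centered at the high-degree vertex, and your proposal asserts this rather than proving it. Fourth, the identity $a = q + m + O(1)$ is false in general; the correct identity is $a - m = q + (\lfloor n^2/4\rfloor - |X||Y|)$, and the imbalance term could be as large as $\Theta(\delta n^2)$ at the stage where you want to use it, so balancedness must be established in tandem with (or before) the cleaning. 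Finally, the conclusion requires $m = 0$ \emph{exactly} and the bipartition to be balanced to within one, which is a rigidity statement; the natural route is a local swap (add a missing crossing edge, delete a suitable within-part edge, show the bowtie count strictly drops), and this involves casework your proposal doesn't address. In short, your outline names the right ingredients, but the central quantitative lemma---that the bowtie count is lower-bounded by something forcing $m=0$ and a near-matching configuration---is stated in a form that is not yet correct and is left unproved; this is where the real work in Kang--Makai--Pikhurko's argument lies.
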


Another direction concerns the counting results on other substructures,  
such as the triangular edges\footnote{An edge is called {\it triangular} if it is contained in a triangle.}, books and bowties, which 
can be guaranteed by the adjacency spectral radius.  
The spectral results for the triangular edges and the books were recently studied in \cite{LFP2024-triangular} and \cite{ZL2022jgt}, respectively.  
Motivated by these results, the second aim in our study is to 
count the number of bowties and 
investigate a spectral correspondence of Theorem \ref{thm-KMP}.

\section{Main results}

We are interested in finding 
the maximum number of substructures under certain spectral conditions. 
These types of questions arise in a wide variety of contexts. 
In this study, we will present two main results on 
the spectral stability and supersaturation 
by counting the number of triangles as well as bowties.

\subsection{A stability of Ning--Zhai's result}
\label{sec1.1}

The extremal graph in Erd\H{o}s--Rademacher's theorem 
is obtained from $T_{n,2}$ by adding an edge to the part of size $\lceil n/2\rceil$. 
In 2021, 
Xiao and Katona \cite{XK2021} 
presented a stability variant 
 by showing that 
if $G$ has more than $ \lfloor {n^2}/{4}  \rfloor  $ edges 
and there is no vertex contained in all triangles,  
then $G$ has at least $n-2$ triangles. 
The {\it triangle covering number} $\tau_3(G)$ is the minimum number of vertices in a set that contains at least one vertex of each triangle in $G$.  
We define $\tau_3(G)=0$ when $G$ contains no triangles.   
In particular,  $\tau_3(G)=1$ if and only if all triangles of $G$ share a common vertex.

\begin{theorem}[Xiao--Katona, 2021] \label{thm-XK}
If $G$ is an $n$-vertex graph with  $e(G)> \lfloor {n^2}/{4}  \rfloor $
 and 
  the triangle covering number  $\tau_3(G) \ge 2$, 
  then $G$ contains at least $n-2$ triangles.  
\end{theorem}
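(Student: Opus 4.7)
The plan is to split on the excess $e(G) - \lfloor n^2/4 \rfloor$ and then use the $\tau_3 \ge 2$ hypothesis to exclude the unique Lov\'{a}sz--Simonovits extremal configuration.

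If $e(G) \ge \lfloor n^2/4 \rfloor + 2$, the Lov\'{a}sz--Simonovits theorem recalled in the excerpt already gives at least $2 \lfloor n/2 \rfloor \ge n - 1$ triangles, which exceeds $n-2$, and we are done without invoking the $\tau_3$ condition. So the interesting case is $e(G) = \lfloor n^2/4 \rfloor + 1$, where Lov\'{a}sz--Simonovits only guarantees $\lfloor n/2 \rfloor$ triangles. The key observation is that equality is attained uniquely by $T_{n,2}+e$ (obtained by adding one extra edge $e$ inside a part of $T_{n,2}$), and in this graph every triangle contains both endpoints of $e$, forcing $\tau_3 = 1$. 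Hence the hypothesis $\tau_3(G) \ge 2$ excludes this tight example, and the task reduces to quantifying the resulting jump in the triangle count.

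For this, I would take a maximum cut $V(G) = A \sqcup B$ with $|A| = a \ge b = |B|$ and set $s = e(A)+e(B)$ and $\Delta = ab - e(A,B)$, so that $s = 1 + \Delta + (\lfloor n^2/4 \rfloor - ab)$. The max-cut condition $d_B(v) \ge d_A(v)$ for $v \in A$ (and symmetrically for $B$) implies that each inside edge $uv \in E(A)$ lies in at least $d_B(u) + d_B(v) - b$ triangles, which is close to $b$ whenever few crossing edges are missing near $u$ and $v$. A short case analysis on the placement of the inside edges then finishes the proof. The case $s = 1$ forces $G = T_{n,2}+e$ and is ruled out by $\tau_3 \ge 2$; if instead all inside edges share a common vertex $w$, then every triangle of $G$ passes through $w$ and again $\tau_3 = 1$, contradicting the hypothesis. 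So $\tau_3 \ge 2$ must produce two ``independent'' inside edges---either two vertex-disjoint edges in a common part (giving roughly $2b$ triangles) or one inside edge in each of $A$ and $B$ (giving roughly $a+b = n$ triangles)---and even after subtracting the triangle losses from the $\Delta$ missing crossing edges, at least $n - 2$ triangles remain.

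The main obstacle is the bookkeeping in the case $s \ge 2$, because a missing crossing edge incident to an endpoint of an inside edge can shave one triangle off a book of size $b$. A closer accounting shows that the tight configuration is $s = 2$, $\Delta = 1$ with the single missing crossing edge touching an endpoint of one of two vertex-disjoint inside edges: this gives exactly $n-2$ triangles and matches the claimed bound. A secondary point is handling strictly unbalanced maximum cuts $a > \lceil n/2 \rceil$: in that regime $\lfloor n^2/4 \rfloor - ab \ge n - 1$, forcing $s$ to grow linearly in $n$ and producing far more inside-edge triangles than needed.
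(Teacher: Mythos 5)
The paper does not prove this theorem; it is stated as a known result of Xiao and Katona and cited to \cite{XK2021}. So there is no paper proof to compare against, and the proposal has to be evaluated on its own terms.

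Your opening reduction via Lov\'{a}sz--Simonovits (for $e(G)\ge\lfloor n^2/4\rfloor+2$) is correct, and the max-cut bookkeeping $s=\Delta+1+(\lfloor n^2/4\rfloor-ab)$ together with the count $|N_B(u)\cap N_B(v)|\ge d_B(u)+d_B(v)-b$ per inside edge is the right machinery. But the sketch contains a concrete error at the point where it would have to engage with the actual tight example. You dismiss ``strictly unbalanced'' max cuts with $a>\lceil n/2\rceil$ by asserting $\lfloor n^2/4\rfloor-ab\ge n-1$, which is false: for $a=\lceil n/2\rceil+k$ the deficit $\lfloor n^2/4\rfloor-ab$ equals $k^2$ (even $n$) or $k(k+1)$ (odd $n$), so for $k=1$ it is $1$ or $2$, and one needs $k\gtrsim\sqrt{n}$ to reach $n-1$. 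This is not a peripheral case: for even $n$ the unique extremal graph for the statement is $K_{n/2+1,\,n/2-1}^{+2}$, which has $\lfloor n^2/4\rfloor+1$ edges, $\tau_3=2$, and exactly $2(n/2-1)=n-2$ triangles, and its max cut has $a=n/2+1$, $s=2$, $\Delta=0$. So the case you wave away is exactly where the bound is attained, and an argument that excludes it cannot be correct.

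Relatedly, the configuration you identify as tight ($s=2$, $\Delta=1$, balanced cut, one missing cross edge touching an endpoint of one of two disjoint inside edges) gives $(b-1)+b=n-1$ triangles when $a=b=n/2$, not $n-2$; it happens to give $n-2$ only for odd $n$ (where $b=(n-1)/2$), so the claim as stated is wrong for even $n$. Beyond these two errors, the parts of the argument that actually bear the weight (``a short case analysis on the placement of the inside edges,'' ``a closer accounting shows'') are deferred rather than carried out: one has to simultaneously handle inside edges split between $A$ and $B$, inside edges that share vertices without all sharing one, slightly unbalanced cuts, and the interaction of the $\Delta$ missing cross edges with the inside-edge endpoints, and show the $n-2$ bound survives in every branch. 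The framework is sound and this is likely doable, but the proposal as written does not do it and the two claims above would have to be repaired before the bookkeeping could close.
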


The first main result of this paper 
gives a spectral version of Theorem \ref{thm-XK} as follows. 
Moreover, our result could be viewed as a stability of 
Ning--Zhai's result in Theorem \ref{thmNZ2021}. 
  

\begin{theorem} \label{thm-n-2}
Let $G$ be a graph on $n\ge 113$ vertices with 
\[ \lambda (G) \ge  
 \lambda (T_{n,2}). \]  
 If the triangle covering number  $\tau_3(G)\ge 2$, 
then  $G$ has at least $n-3$ triangles. 
\end{theorem}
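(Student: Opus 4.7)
The plan is to combine a case distinction on the edge count with a spectral stability analysis that parallels the approach of Xiao and Katona used to prove Theorem~\ref{thm-XK}. First I would split the argument according to whether $e(G)>\lfloor n^2/4\rfloor$ or $e(G)\le \lfloor n^2/4\rfloor$. In the first regime, Theorem~\ref{thm-XK} applies directly: the hypothesis $\tau_3(G)\ge 2$ forces at least $n-2$ triangles, which already exceeds the target $n-3$. The substance of Theorem~\ref{thm-n-2} therefore lies in the spectrally tight regime $e(G)\le \lfloor n^2/4\rfloor$, where classical edge-based supersaturation is unavailable and the argument must be genuinely spectral.

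For the tight regime I would work with the Perron eigenvector $\mathbf{x}=(x_v)_{v\in V(G)}$ of $A(G)$, normalized so that $\max_v x_v=x_u=1$ at some vertex $u$. Two identities drive the analysis:
\begin{equation*}
\lambda(G)\,x_u=\sum_{v\sim u} x_v,\qquad \lambda(G)^2\,x_u=\sum_{v\sim u}\sum_{w\sim v} x_w.
\end{equation*}
Combining $\lambda(G)^2\ge \lfloor n^2/4\rfloor$ with $e(G)\le \lfloor n^2/4\rfloor$ and $x_v\le 1$, I expect to show that $u$ has degree essentially $\lceil n/2\rceil$ and that $V(G)$ admits a near-balanced bipartition $(A,B)$ with $u\in A$ such that all but a bounded number of edges lie between $A$ and $B$. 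In short, $G$ is quasi-bipartite, and since $G\ne T_{n,2}$ by Theorem~\ref{thmNZ2021}, it must contain at least one intra-part edge.

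With this skeleton in hand, the triangle count is controlled by the intra-part edges together with the near-complete bipartite structure: an intra-part edge $xy$ sits in $|N(x)\cap N(y)|$ triangles, and in the quasi-bipartite setup this common neighborhood is essentially the opposite part, of size $\sim n/2$. The hypothesis $\tau_3(G)\ge 2$ forbids all triangles from sharing a common vertex. Via careful bookkeeping of the triangles through two intra-part edges that cannot both be covered by a single vertex, and paying for the handful of missing cross edges needed to keep $e(G)\le \lfloor n^2/4\rfloor$ while still having $\lambda(G)\ge \lambda(T_{n,2})$, one extracts at least $n-3$ distinct triangles. The decrease from the Xiao--Katona bound $n-2$ to $n-3$ corresponds precisely to the one cross edge that can be sacrificed in the spectrally tight regime.

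The main obstacle is this sparse regime. The spectral hypothesis is strictly weaker than an edge count exceeding $\lfloor n^2/4\rfloor$, so the Perron vector analysis must simultaneously (i) pin down a near-balanced bipartite skeleton, (ii) locate the intra-part edges responsible for triangles, and (iii) translate $\tau_3(G)\ge 2$ into a genuine combinatorial statement inside the quasi-bipartite structure. The most delicate step is the bookkeeping that yields exactly $n-3$ triangles rather than a weaker constant; this is where the numerical assumption $n\ge 113$ enters, absorbing the $O(1)$ error terms coming from the Perron vector estimates and from the bounded number of exceptional edges.
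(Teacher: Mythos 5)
Your high-level plan --- handle $e(G)>\lfloor n^2/4\rfloor$ via Xiao--Katona, then exploit the spectral condition in the tight regime --- is a sensible orientation but diverges from the paper's argument and leaves the decisive steps unfilled. The paper does not split on edge count: it argues by contradiction from the assumption $t(G)\le n-4$, invoking the spectral triangle-counting inequality $t\ge\tfrac{1}{3}\lambda(\lambda^2-m)$ (Lemma~\ref{thm-BN-CFTZ-NZ}) to deduce $e(G)\ge\lfloor n^2/4\rfloor-5$ uniformly, so the case distinction becomes unnecessary. Crucially, the near-balanced bipartition does not come from a Perron-vector analysis but from the supersaturation-stability result (Theorem~\ref{thm-far-bipartite}): if $G$ were $12$-far from bipartite, that theorem would force $t(G)>n$, contradicting $t(G)\le n-4$. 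You invoke neither of these tools, and your sketch of a Perron-vector argument is not shown to deliver the needed bipartition with a bounded number of intra-part edges; in fact the paper's proof of this theorem uses no eigenvector estimates at all.

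The genuine gap is in your final paragraph. You propose to count at least $n-3$ triangles directly by summing $|N(x)\cap N(y)|$ over intra-part edges, ``paying for the handful of missing cross edges,'' with the drop from $n-2$ to $n-3$ corresponding to one sacrificeable cross-edge. This is a heuristic, not a proof, and it runs in the opposite direction from the paper: the paper never directly counts triangles in this regime. After establishing $e(S)+e(T)=2$ with two \emph{disjoint} class-edges (this is exactly where $\tau_3\ge 2$ is used: it rules out $e(S)+e(T)\le 1$ and intersecting class-edges, since either would let one vertex cover all triangles), it proves via Lemmas~\ref{lem-plus2} and~\ref{lem-plus1} that any subgraph of $K_{s,t}^{+2}$ or $K_{s,t}^{++}$ with at most $n-4$ triangles already satisfies $\lambda<\lambda(T_{n,2})$ --- a contradiction --- by explicit characteristic-polynomial computations for about a dozen candidate graphs $L_1,\dots,L_{13}$; the hypothesis $n\ge 113$ is consumed there, not in Perron-vector error terms. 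Without that case analysis, or an alternative argument accounting for several deleted cross-edges (each of which can destroy up to two of the expected triangles), your plan does not close.
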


The bound in Theorem \ref{thm-n-2} 
is the best possible.  We can find the extremal graphs with exactly $n-3$ triangles. 
 For even $n$, we can delete an edge  
 from $K_{\frac{n}{2} +1, \frac{n}{2} -1}^{+2}$.  
 For odd $n$, we can take $K_{\frac{n+3}{2}, \frac{n-3}{2}}^{+2}$, or delete two edges from 
$K_{\frac{n+1}{2},\frac{n-1}{2}}^{+2}$ to destroy two triangles. 
This yields the following graphs in Figure \ref{fig-six}. 
For simplicity, we omit the tedious calculations.

 \begin{figure}[htbp]
\centering
\includegraphics[scale=0.79]{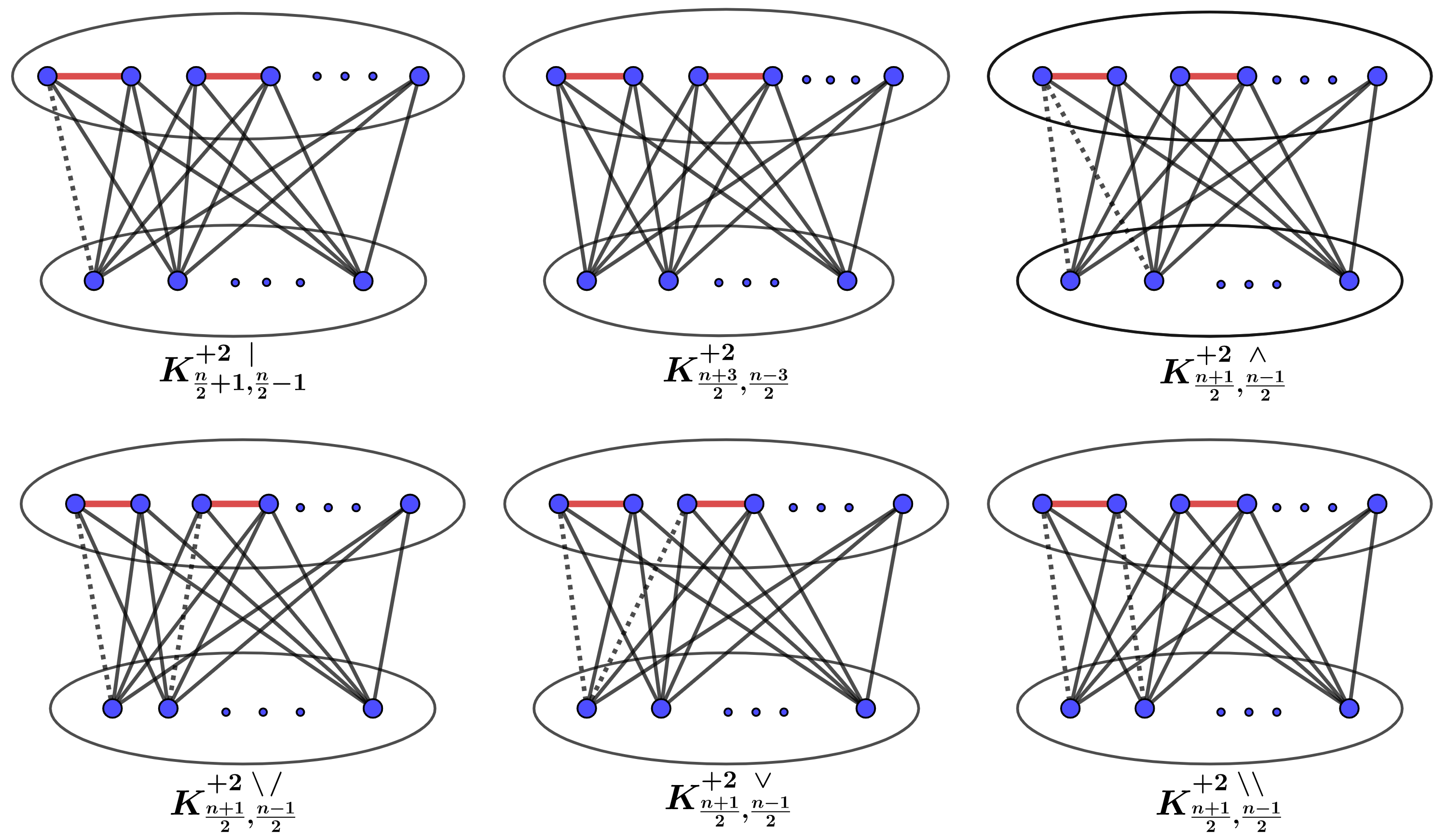} 
\caption{The extremal graphs in Theorem \ref{thm-n-2}.}
\label{fig-six}
\end{figure}

\subsection{Spectral supersaturation for the bowtie}

\label{sec1.2}

The spectral extremal problem for the 
friendship graph $F_k$ was investigated in \cite{CFTZ20,ZLX2022}  
for sufficiently large $n$. 
As mentioned before, there are many interesting extremal problems 
 involving the bowtie $F_2$. 
For example, Li, Lu and Peng \cite{2022LLP} studied 
the problem on $F_2$ and got rid of the condition on $n$ being sufficiently large. 
The authors \cite{2022LLP} presented a spectral version of Theorem \ref{EFGG-F2} for every $n\ge 7$. 
We denote by $K_{ \lfloor \frac{n}{2} \rfloor, \lceil \frac{n}{2} \rceil}^+$ the graph obtained from
 $K_{ \lfloor \frac{n}{2} \rfloor, \lceil \frac{n}{2} \rceil}$
by embedding an edge into the vertex part of size $\lfloor \frac{n}{2} \rfloor$; see Figure \ref{fig-thm6-7}.

\begin{theorem}[Li--Lu--Peng, 2023] \label{thm-n-F2}
If $G$ is an $F_2$-free graph  on $n\ge 7$ vertices, then
\[  \lambda (G) \le \lambda (K_{ \lfloor \frac{n}{2} \rfloor, \lceil \frac{n}{2} \rceil}^+). \]
Moreover, the equality holds if and only if $G=K_{ \lfloor \frac{n}{2} \rfloor, \lceil \frac{n}{2} \rceil}^+$.
\end{theorem}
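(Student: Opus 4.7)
The plan is to take $G$ to be an $F_2$-free graph on $n \ge 7$ vertices with the maximum possible spectral radius $\lambda := \lambda(G)$, and to show $G = K_{\lfloor n/2 \rfloor, \lceil n/2 \rceil}^+$. The crucial reformulation I would use is that $G$ is $F_2$-free if and only if, for every vertex $v$, the induced subgraph $G[N(v)]$ has matching number at most $1$; equivalently, $G[N(v)]$ is a (possibly empty) star $K_{1,t}$ or a triangle $K_3$. A direct check shows that $K_{\lfloor n/2 \rfloor, \lceil n/2 \rceil}^+$ itself satisfies this condition, which yields the lower bound $\lambda \ge \lambda(K_{\lfloor n/2 \rfloor, \lceil n/2 \rceil}^+) > \sqrt{\lfloor n^2/4 \rfloor}$.

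Next I would exploit the Perron eigenvector $\mathbf{x}$, normalized so that $x_{u^*} = \max_v x_v = 1$ at some vertex $u^*$. The eigenvalue equation $\lambda = \sum_{v \sim u^*} x_v \le d(u^*)$ immediately forces $d(u^*) \ge \lceil \lambda \rceil \ge \lceil n/2 \rceil$. Setting $A := N(u^*)$ and $B := V(G) \setminus N[u^*]$ gives $|A| \ge \lceil n/2 \rceil$ and $|B| \le \lfloor n/2 \rfloor - 1$, and the local $F_2$-free condition at $u^*$ already forces $G[A]$ to be a star or a triangle.

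The heart of the proof is a sequence of local modifications of Kelmans/shifting type, weighted by the Perron entries, which preserve $F_2$-freeness while not decreasing $\lambda$. I would aim to establish, in this order: (i) $B$ is an independent set, since any internal edge of $B$ can be swapped for a cross-edge to strictly increase $\lambda$ without creating an $F_2$; (ii) every $v \in B$ is fully joined to $A$, so the bipartite backbone is complete; and (iii) the internal edges in $A$ reduce to a single edge, as a star $K_{1,t}$ with $t \ge 2$ or a triangle inside $A$ can be replaced by a single edge while strictly increasing $\lambda$ via a Perron-weight comparison. Once $G$ is reduced to the form $K_{a, n-a}$ plus a single edge in the part of size $a$, a direct computation of $\lambda(K_{a, n-a}^+)$ using the cubic characteristic polynomial of the natural equitable partition identifies $a = \lfloor n/2 \rfloor$ as the unique maximizer, yielding $G = K_{\lfloor n/2 \rfloor, \lceil n/2 \rceil}^+$.

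The principal obstacle will be step (iii): ruling out larger internal structure on the side $A$. The shift from a $K_{1,t}$ with $t \ge 2$ or a triangle to a single edge must be shown to strictly increase the Rayleigh quotient \emph{while} preserving the matching-number-$\le 1$ condition at \emph{every} neighborhood; this forces a delicate choice of which edge to delete and which endpoint to reattach, guided by the Perron ordering. A secondary subtlety is that the hypothesis $n \ge 7$ precludes asymptotic tools such as the triangle removal lemma, so the small boundary cases may demand direct verification through the equitable-partition characteristic polynomial of the candidate graphs.
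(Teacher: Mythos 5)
This theorem is quoted in the present paper from Li--Lu--Peng (2023) without proof, so there is no in-paper argument to compare against; I will evaluate your proposal on its own. There is a genuine gap, and it lies in the choice of decomposition.

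Your partition $A := N(u^*)$, $B := V(G)\setminus N[u^*]$ is not compatible with the shape of the extremal graph, and this makes steps (ii) and (iii) unworkable. Note that in $K_{\lfloor n/2\rfloor,\lceil n/2\rceil}^+$ the extra edge lies in the \emph{smaller} part (of size $\lfloor n/2\rfloor$), and the two endpoints of that edge have the highest degree, hence the largest Perron entries. Taking $u^*$ to be such an endpoint, with $v$ its partner on the extra edge and $T$ the $\lceil n/2\rceil$-part, one gets $A = N(u^*) = T\cup\{v\}$, a set of $\lceil n/2\rceil+1$ vertices that \emph{straddles} the true bipartition, and $G[A]$ is a star with $\lceil n/2\rceil$ edges centered at $v$ (since $v$ is joined to all of $T$). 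Your step (iii) would try to collapse that star to a single edge, deleting $\lceil n/2\rceil - 1$ edges that are in fact cross-edges of the extremal bipartition; this certainly decreases $\lambda$, so the claimed ``strictly increases $\lambda$'' modification is unavailable. Your step (ii) also fails here: the vertices of $B$ lie in the same $\lfloor n/2\rfloor$-part as $v$, so none of them is adjacent to $v\in A$, and adding any edge $wv$ with $w\in B$ puts a complete bipartite graph $K_{\lfloor n/2\rfloor-1,\lceil n/2\rceil}$ inside $N(v)$, which for $n\ge 7$ contains a $2$-matching and hence a bowtie at $v$; so the missing $B$--$A$ edges cannot be added while preserving $F_2$-freeness. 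Finally, your framework forces $|A| = d(u^*)\ge\lceil n/2\rceil$ with the surviving edge inside $A$, while the endgame computation you invoke puts the edge in a part of size $\lfloor n/2\rfloor$, which is strictly smaller for odd $n$---an internal contradiction.

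The decomposition that does work here (and that the present paper uses for the related Theorems \ref{thm-n-2} and \ref{thm-bowtie}) is a near-balanced \emph{max-cut} partition $(S,T)$ with $e(S)+e(T)$ bounded, obtained from a triangle-supersaturation bound together with Lemma \ref{thm-BN-CFTZ-NZ}, rather than the partition $(N(u^*),\,V\setminus N(u^*))$. One then shows $e(S)+e(T)=1$ and, crucially, that placing that lone class-edge in the smaller part is what maximizes $\lambda$. Your local-matching characterization of $F_2$-freeness and your equitable-partition computation of $\lambda(K_{a,b}^+)$ are both fine ingredients, but they need to be driven by the correct bipartition.
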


Theorem \ref{thm-n-F2} implies Theorem \ref{EFGG-F2}, and the condition $n\ge 7$ in Theorem \ref{thm-n-F2} is tight;  
see \cite{2022LLP} for the details.   
Furthermore, one may ask how many copies of the bowtie must have in a graph with the spectral radius larger than
that of the extremal graph.  
As far as we know, 
there are very few results on spectral counting substructures, 
 including only cliques \cite{BN2007jctb} and triangles \cite{NZ2021}. 
Moreover, Ning and Zhai \cite{NZ2021}
proposed a general problem on finding 
spectral counting result for all color-critical graphs.  
In graphs with fixed order, 
the spectral supersaturation problem 
has {\bf not} been considered for substructures  
that are {\bf not} color-critical. 
Therefore, it is interesting to obtain a first result of this type.   

 \begin{figure}[htbp]
\centering
\includegraphics[scale=0.8]{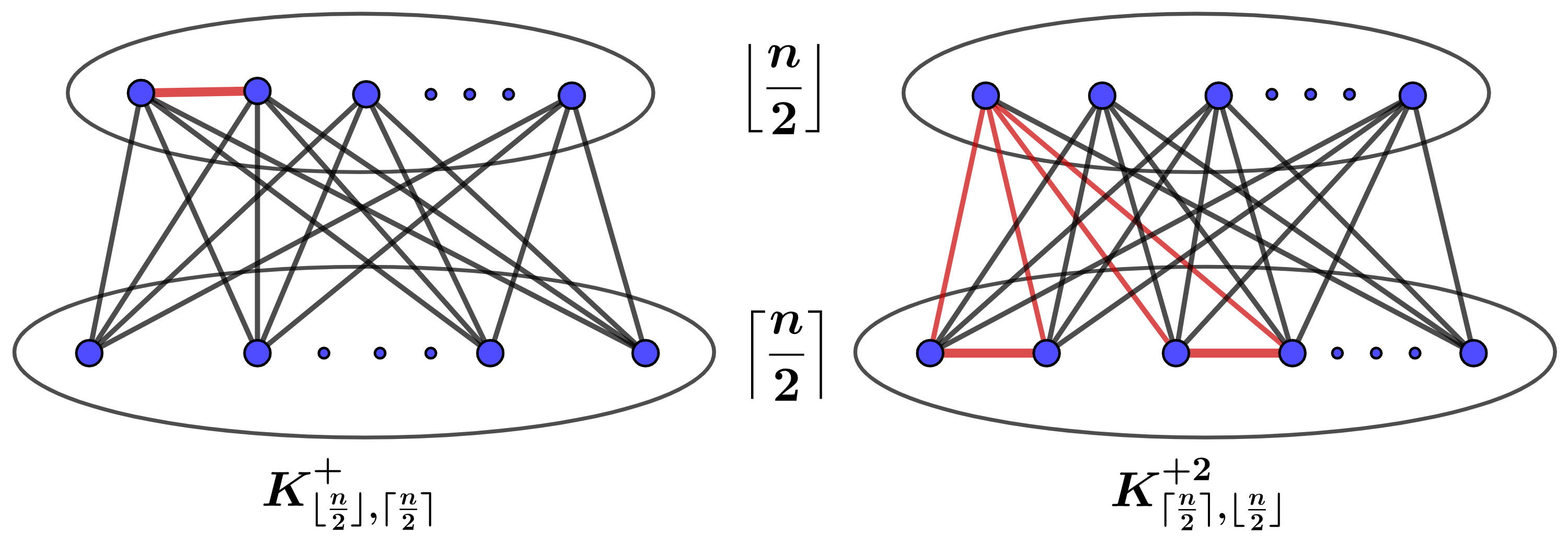} 
\caption{The extremal graphs in Theorems \ref{thm-n-F2} and  \ref{thm-bowtie}.}
\label{fig-thm6-7}
\end{figure}

\medskip 
The second main result of this paper is to 
count the number of bowties, which 
resolves an open problem proposed by Li, Lu and Peng  \cite[Problem 4.2]{2022LLP}.   
Let $K_{\lceil \frac{n}{2} \rceil, \lfloor \frac{n}{2} \rfloor}^{+2}$ be 
 the graph obtained from $K_{\lceil \frac{n}{2} \rceil, \lfloor \frac{n}{2} \rfloor}$ by adding two disjoint edges  
to the vertex part of size $\lceil \frac{n}{2} \rceil$.

\begin{theorem} \label{thm-bowtie}
If $n\ge 8.8\times 10^{6}$ and 
$G$ is an $n$-vertex graph with 
\[ \lambda (G) \ge \lambda (K_{\lceil \frac{n}{2} \rceil, \lfloor \frac{n}{2} \rfloor}^{+2}) , \]  
then $G$ has at least $\lfloor \frac{n}{2} \rfloor$  bowties, 
and $K_{\lceil \frac{n}{2} \rceil, \lfloor \frac{n}{2} \rfloor}^{+2}$ is the unique spectral extremal graph. 
\end{theorem}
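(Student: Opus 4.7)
The plan is to combine a spectral-stability argument with an eigenvector-based count of bowties, extending the methodology of \cite{CFTZ20,ZLX2022,LFP2024-triangular} from ``at least one forbidden copy'' to a precise counting statement. First I would compute $\lambda(K_{\lceil n/2\rceil,\lfloor n/2\rfloor}^{+2})$ via its equitable quotient matrix and obtain an expansion of the form $\lambda(K_{\lceil n/2\rceil,\lfloor n/2\rfloor}^{+2})=\lambda(T_{n,2})+\Theta(1/n)$ with an explicit positive leading constant, in particular strictly larger than $\lambda(K_{\lfloor n/2\rfloor,\lceil n/2\rceil}^{+})$. Combined with Theorem~\ref{thm-n-F2}, this already forces $G$ to contain at least one bowtie, and the real task is to upgrade this to the quantitative lower bound $\lfloor n/2\rfloor$.

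The second step is structural stability. Using a spectral Erd\H{o}s--Simonovits stability argument (as in \cite{Niki2009jgt}) together with Theorem~\ref{thmNZ2021}, I would conclude that $V(G)$ admits a near-balanced bipartition $V(G)=A\cup B$ with $e(A)+e(B)=O(1)$ and $e(A,B)=\lfloor n^2/4\rfloor-O(1)$. Letting $\mathbf{x}$ be the Perron eigenvector of $G$ normalized by $\|\mathbf{x}\|_\infty=1$, iterative application of $\lambda(G)x_v=\sum_{u\sim v}x_u$, in the spirit of \cite{LFP2024-triangular}, should produce a sharp $\ell_\infty$ approximation of $\mathbf{x}$ by the Perron vector of $T_{n,2}$, so that eigenvector entries cluster tightly around two values determined by the side. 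The spectral surplus $\lambda(G)-\lambda(T_{n,2})\ge\Theta(1/n)$ is then charged against the defect edge set $E^{*}:=E(A)\cup E(B)$: each edge of $E^{*}$ contributes at most $\Theta(1/n)$ to $\lambda(G)$, and an analysis of the quadratic form $\mathbf{x}^{\top} A(G)\mathbf{x}$ forces $|E^{*}|\ge 2$ with the two edges forming a matching in the heavier part.

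Finally, I would count bowties directly. Since $G$ is almost bipartite, every triangle uses a defect edge and every bowtie uses at least two. For the configuration $E^{*}=\{e_1,e_2\}$ with $e_1,e_2$ disjoint in the heavier part $A$, each vertex $y\in B$ joined to all four endpoints gives a bowtie centered at $y$, yielding at least $\lfloor n/2\rfloor$ bowties; in the graph $K_{\lceil n/2\rceil,\lfloor n/2\rfloor}^{+2}$ the count is precisely $\lfloor n/2\rfloor$. A finite case analysis of the alternative shapes of $E^{*}$ (two edges sharing a vertex, a triangle, edges on both sides, or more than two defect edges) shows that each alternative either violates the spectral lower bound, or yields strictly more bowties by a Kelmans-style comparison argument; the uniqueness of $K_{\lceil n/2\rceil,\lfloor n/2\rfloor}^{+2}$ as the spectral extremal graph follows.

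The main obstacle is pinning down the exact shape of $E^{*}$ in the second step. Because $F_2$ is not color-critical, removing a single defect edge does not automatically eliminate triangles in a predictable way, so the standard ``peel-and-compare'' techniques do not give sharp spectral bounds. I expect this step to require an eigenvector-weighted local switching argument, comparing $\lambda(G)$ with $\lambda(G')$ for surgically modified graphs $G'$, together with quantitative Perron eigenvector perturbation estimates accurate to $o(1/n)$; the explicit threshold $n\ge 8.8\times 10^{6}$ should enter through the constants needed to make these estimates rigorous.
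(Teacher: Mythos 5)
Your high-level framework (spectral stability to get an approximate bipartition, then structural analysis of defect edges, then a closing calculation) matches the paper's, but there is a serious gap in the middle step. You claim that ``a spectral Erd\H{o}s--Simonovits stability argument together with Theorem~\ref{thmNZ2021}'' yields a partition $V(G)=A\cup B$ with $e(A)+e(B)=O(1)$. That is not what stability gives: the Erd\H{o}s--Simonovits/Nikiforov-type stability produces $o(n^2)$ defect edges, not a constant number, and Theorem~\ref{thmNZ2021} is a lower bound on triangles and hence works in the wrong direction. Getting from $o(n^2)$ (or even $O(n)$, which is what the paper's Lemma~\ref{lem-partition} actually delivers via Theorem~\ref{thm-far-bipartite}) down to $e(S)+e(T)\le 3$ is the technical heart of the proof. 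The paper does this by introducing two ``bad'' vertex classes $L$ (low degree) and $W$ (large same-side degree), proving $W\subseteq L$, showing $G[S\setminus L]$ and $G[T\setminus L]$ are $3K_2$-free and $K_{1,2}$-free because any such configuration together with a big common neighborhood spawns more than $\lfloor n/2\rfloor$ bowties (Lemma~\ref{lem-one-edge}), and then eliminating $L$ by an eigenvector surgery that replaces the neighborhood of a low-degree vertex by the large independent set $I_T$ (Lemma~\ref{Lempty}). Your plan replaces all of this with an $\ell_\infty$ Perron-vector perturbation estimate accurate to $o(1/n)$ plus a charging argument against defect edges; you flag this as ``the main obstacle'' but do not indicate how the bowtie hypothesis (which is what actually limits the defect edges) would enter the charging scheme, so as written the step would fail.

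Two smaller remarks. First, the paper deliberately avoids removal-lemma-flavored stability and instead uses the elementary Moon--Moser/supersaturation inequality (Theorem~\ref{thm-far-bipartite}); that choice is exactly what makes the explicit threshold $n\ge 8.8\times 10^6$ attainable, whereas invoking~\cite{Niki2009jgt} would push $n$ far higher. Second, the paper's opening move is to bound $t(G)$ using the Alon--Shikhelman estimate (Lemma~\ref{lem-AS}, applicable since a graph with at most $\lfloor n/2\rfloor$ bowties is $F_k$-free for $k\approx\sqrt{n}$), which feeds into Lemma~\ref{thm-BN-CFTZ-NZ} to get $e(G)>n^2/4-O(n)$; this step is absent from your plan and is what makes the subsequent supersaturation-stability argument effective. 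Your step~1 (showing $G$ has at least one bowtie) is correct but unnecessary once the argument is set up as in the paper.
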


The extremal graph results that count the non-color-critical substructures seem much more difficult to obtain.  
At the first glance, 
one may consider the $4$-cycle as a starting point. 
It was shown in \cite{Niki2007laa2,ZW12} 
that the spectral extremal graph for the $4$-cycle is the friendship graph 
(or possibly hanging an edge). 
Unfortunately, it is trivial to count $4$-cycles 
in an $n$-vertex graph in terms of the spectral radius,  
since adding one more edge to the extremal graph leads to at most two 
copies of the $4$-cycle.  
Therefore, the bowtie could be a natural candidate as a 
simplest non-color-critical substructure. 

\medskip 

We would like to emphasize that 
the spectral conditions in our results are {\bf weaker} than 
the conventional conditions on the size of a graph, since  
any graph $G$ with $e(G)>  e(T_{n,2})$ 
must satisfy $\lambda (G)> \lambda (T_{n,2})$.  
Otherwise, assuming that $\lambda (G) \le \lambda (T_{n,2})$, 
it follows that $e(G)\le \lfloor \frac{n}{2} \lambda (G)\rfloor \le 
\lfloor \frac{n}{2} \lambda (T_{n,2})\rfloor = e(T_{n,2})$.   
Moreover, there are many graphs with $\lambda (G)> \lambda (T_{n,2})$ 
but $e(G)< e(T_{n,2})$. 
So, the spectral condition in Theorem \ref{thm-n-2} is weaker than 
the size condition in Theorem \ref{thm-XK}. 
Observe that all extremal graphs in Figure \ref{fig-six} have exactly $\lfloor n^2/4\rfloor$ edges. Thus, Theorem \ref{thm-n-2} 
can imply Theorem \ref{thm-XK}. 
Similarly, the same implication holds for Theorem \ref{thm-bowtie},   
since every graph with $e(G)> e(T_{n,2}) +2$ satisfies 
$ \lambda (G) > \lambda (K_{\lceil \frac{n}{2} \rceil, \lfloor \frac{n}{2} \rfloor}^{+2})$. 
Incidentally, there exist many graphs $G$ that satisfy $e(G)\ge e(T_{n,2})$, 
but it is not necessary that $\lambda (G)\ge \lambda (T_{n,2})$. 
For example, 
taking an odd $n\ge 5$,  we define  
$G=K_{\frac{n-1}{2}, \frac{n+1}{2}}^{+\,\,|}$ as the graph obtained  from 
$K_{ \frac{n-1}{2},  \frac{n+1}{2}}$ by adding an edge $e_1$ to the part of size $ \frac{n-1}{2} $, and then deleting an edge $e_2$ between two parts so that $e_2$ is incident to $e_1$. 
It is not difficult to verify that $e(K_{\frac{n-1}{2}, \frac{n+1}{2}}^{+\,\,|}) = 
e(T_{n,2})$, while 
$\lambda (K_{\frac{n-1}{2}, \frac{n+1}{2}}^{+\,\,|}) < \lambda (T_{n,2})$.

\subsection{Our approach, organization and nonation}

{\bf Our approach.} 
We point out that 
the original motivation is to establish Theorems \ref{thm-n-2} 
and \ref{thm-bowtie}, since the extremal graphs in these problems 
are constructed from a bipartite Tur\'{a}n graph $T_{n,2}$ 
by embedding two disjoint edges.  
Our proofs of Theorems \ref{thm-n-2} 
and \ref{thm-bowtie} are quite different from those of Theorems \ref{thmNZ2021} and \ref{thm-n-F2}, 
the latter of which employ the Perron eigenvector 
in conjunction with the $2$-length walks starting from the vertex corresponding to the maximum coordinate of the eigenvector.   
The main idea in the proofs of our paper is based on the use of  
the supersaturation-stability method \cite{LFP2024-triangular}. 
Roughly speaking, every graph with few triangles can be made bipartite 
by removing a small number of  edges; 
see, e.g., Theorem \ref{thm-far-bipartite}.  
This seems to be of independent interest. 
To prove Theorem \ref{thm-bowtie}, 
 the spectral techniques due to 
Cioab\u{a}, Feng, Tait and Zhang \cite{CFTZ20} 
will also be applied for our purpose. 
As was the case in above-mentioned works \cite{Mub2010, PY2017, LM2022-Erd-Rad, KMP2020}, 
it is a traditional component to use the celebrated graph removal lemma 
to deal with the extremal graph problems on counting substructures. 
It is an advantage that all the proofs of our results make no use of the graph removal lemma,  
so that we do not require the order $n$ to be sufficiently large. 
To avoid unnecessary computations, 
we did not make much effort to optimize the bound on the order of the spectral extremal graph in our arguments. 
Moreover, we are optimistic that the supersaturation-stability method used in our paper 
may be useful in obtaining the spectral results for counting other graphs (even for non-color-critical graphs such as the wheel graph $W_5$ and the Petersen graph among others).

\medskip 
\noindent 
{\bf Organization.}   
 In Section \ref{sec3}, 
we shall introduce some preliminaries, 
including a spectral triangle counting lemma, 
the Moon--Moser inequality and the supersaturation-stability. Moreover, some 
 computations on the spectral radii of extremal graphs will be provided 
 in this section.  
 In Section \ref{sec4}, 
 we present the proof of Theorem  \ref{thm-n-2}. 
It is quite complicated to establish the supersaturation 
 for bowties just as presented in \cite{KMP2020}.  
 Our proof of Theorem \ref{thm-bowtie} is complicated as well.  
So the main ideas and the detailed proof will be provided separately in Section \ref{sec6}. 
In addition, we shall conclude some 
spectral extremal problems for interested readers involving 
 counting triangles and bowties in Section \ref{sec7}. 

\medskip

\noindent 
{\bf Notation.} 
We usually write $G=(V,E)$ for a simple undirected 
graph with vertex set 
$V=\{v_1,\ldots ,v_n\}$ and edge set $E=\{e_1,\ldots ,e_m\}$, where $n=|V|$ and $m=|E|$.   
If $S\subseteq V$ is a subset of the vertex set, then 
$G[S]$ denotes the subgraph of $G$ induced by $S$, 
i.e., the graph on $S$ whose edges are those edges of $G$ 
with both endpoints in $S$. 
For simplicity, we denote $e(S)=e(G[S])$ 
and we use $e(G)$ to denote $e(G[V])$.     
Let $G \setminus S$ denote the subgraph induced by $V\setminus S$.  
In addition, we write $G[S,T]$ for the bipartite subgraph of $G$ whose edges have one endpoint in $S$ and the other in $T$,   and similarly, we write $e(S,T)$ for the number of edges of $G[S,T]$.  
Let $N(v)$ be the set of vertices adjacent to a vertex $v$,  and let $d(v)=|N(v)|$ be the degree of $v$. 
Moreover, we write $N_S(v)= N(v) \cap S$ 
for the set of neighbors of $v$ in $S$, 
and we denote $d_S(v)=|N_S(v)|$ for simplicity. 
Let $t(G)$ be the number of triangles in $G$.

\section{Preliminaries}
\label{sec3}

\subsection{A spectral triangle counting lemma}

To begin with, we introduce the following spectral result for counting triangles.  

\begin{lemma}[See \cite{BN2007jctb,CFTZ20,NZ2021}] 
\label{thm-BN-CFTZ-NZ}
Let $G$ be a graph with $m$ edges. Then 
\begin{equation*} \label{eq-spectral-super-triangle}
t(G) \ge \frac{\lambda \bigl(\lambda^2 - m\bigr)}{3}. 
  \end{equation*}
  The equality holds if and only if $G$ is a complete bipartite graph. 
\end{lemma}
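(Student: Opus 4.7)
The plan is to use the trace method, expressing the triangle count and edge count as power sums of the eigenvalues of the adjacency matrix $A=A(G)$. Let $\lambda=\lambda_1\ge \lambda_2\ge \cdots \ge \lambda_n$ be the eigenvalues of $A$. The starting identities are the familiar ones
\[
6\,t(G)=\operatorname{tr}(A^3)=\sum_{i=1}^{n}\lambda_i^3, \qquad 2m=\operatorname{tr}(A^2)=\sum_{i=1}^{n}\lambda_i^2,
\]
together with $\sum_{i=1}^{n}\lambda_i=\operatorname{tr}(A)=0$.

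The key observation is that $-\lambda \le \lambda_i \le \lambda$ for every $i\ge 2$, so $\lambda_i+\lambda\ge 0$ and therefore $\lambda_i^2(\lambda_i+\lambda)\ge 0$. This gives the pointwise inequality $\lambda_i^3\ge -\lambda\,\lambda_i^2$ for each $i\ge 2$. Summing over $i\ge 2$ and adding $\lambda^3$ for the top eigenvalue, I obtain
\[
6\,t(G)=\lambda^3+\sum_{i\ge 2}\lambda_i^3\;\ge\;\lambda^3-\lambda\sum_{i\ge 2}\lambda_i^2=\lambda^3-\lambda(2m-\lambda^2)=2\lambda(\lambda^2-m),
\]
which rearranges to the claimed bound $t(G)\ge \lambda(\lambda^2-m)/3$.

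For the equality case, I would trace back the slack: equality in $\lambda_i^2(\lambda_i+\lambda)\ge 0$ forces, for each $i\ge 2$, either $\lambda_i=0$ or $\lambda_i=-\lambda$. Say $\lambda$ appears $r$ times as a negative eigenvalue (so the non-zero spectrum is $\lambda$ with multiplicity $1$ and $-\lambda$ with multiplicity $r$, after noting the Perron--Frobenius fact that $\lambda_1=\lambda$ has multiplicity one on each connected component of maximum spectral radius). Using $\sum \lambda_i=0$ gives $\lambda-r\lambda=0$, hence $r=1$. Thus $G$ has spectrum $\{\lambda,\,0^{\,(n-2)},\,-\lambda\}$, and a standard characterization (a graph whose only non-zero eigenvalues are $\pm\lambda$ is a disjoint union of a complete bipartite graph and isolated vertices) then identifies $G$ as a complete bipartite graph, as claimed.

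The proof is essentially a one-line eigenvalue inequality, so there is no serious obstacle; the only point requiring a bit of care is the equality analysis, specifically ruling out multiplicity greater than one for $\lambda$ (handled by Perron--Frobenius on components) and invoking the classification of graphs with exactly two non-zero eigenvalues $\pm\lambda$ to conclude completeness of the bipartition.
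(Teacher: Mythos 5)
Your proof is correct: the pointwise estimate $\lambda_i^3\ge -\lambda\lambda_i^2$ for $i\ge 2$, summed against $\operatorname{tr}(A^3)=6t$ and $\operatorname{tr}(A^2)=2m$, is exactly the power-sum argument used in the cited references (Cioab\u{a}--Feng--Tait--Zhang and Ning--Zhai); the paper itself states the lemma without proof and points to those sources. Your equality analysis is also sound — equality forces $\lambda_i\in\{0,-\lambda\}$ for $i\ge2$, which already rules out a repeated top eigenvalue without any separate appeal to Perron--Frobenius, and $\operatorname{tr}(A)=0$ then gives spectrum $\{\lambda,0^{(n-2)},-\lambda\}$, i.e.\ $G$ is a complete bipartite graph together with isolated vertices (a qualification the paper itself adds in the paragraph following the lemma).
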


This result goes back to an old theorem of Nosal \cite{Nosal1970}, which asserts that every graph $G$ with $m$ edges and $\lambda (G)> \sqrt{m}$ contains a triangle; see \cite{Niki2002cpc,Ning2017-ars} for related extensions.  
The inequality in Lemma \ref{thm-BN-CFTZ-NZ} can be recast as three different versions: 
\begin{equation*} 
   \lambda^3 \le 3t + m \lambda 
\quad \Leftrightarrow \quad {t\ge \frac{1}{3}\lambda (\lambda^2 - m)  } 
\quad \Leftrightarrow \quad m \ge \lambda^2- \frac{3t}{\lambda}. 
\end{equation*}
Lemma \ref{thm-BN-CFTZ-NZ} was initially studied by 
Bollob\'{a}s and Nikiforov  \cite{BN2007jctb}, 
and it was recently revisited by Cioab\u{a},  Feng, 
Tait and Zhang \cite{CFTZ20} as well as Ning and Zhai \cite{NZ2021}. 
Crucially, Lemma \ref{thm-BN-CFTZ-NZ}  tells us that if 
 $\lambda (G)\ge {n}/{2}$ and $t(G)\le kn$, then $G$ has at least $n^2/4 - 6k$ edges. 
This provides the expected number of edges  
 for forcing a graph to be close to bipartite.

\subsection{The Moon--Moser inequality}

Next, we present a result of Moon and Moser \cite[p. 285]{MM1962}.  

\begin{theorem}[Moon--Moser] 
\label{thm-MM-k3}
Let $G$ be a graph on $n$ vertices with $m$ edges. 
Then  
\[ t(G) \ge \frac{4m}{3n}\left(m- \frac{n^2}{4} \right). \]
\end{theorem}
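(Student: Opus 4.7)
The plan is to give the standard double-counting proof that goes back to Moon and Moser. The starting point is to count, for each edge $uv \in E(G)$, the number of triangles containing it. Since a triangle through $uv$ corresponds to a common neighbor of $u$ and $v$, one has
\[ 3t(G) = \sum_{uv \in E(G)} |N(u) \cap N(v)|, \]
where the factor $3$ accounts for the fact that each triangle is counted once per edge.

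Next I would apply the elementary inclusion bound $|N(u) \cap N(v)| \ge d(u) + d(v) - n$, which holds because $N(u) \cup N(v) \subseteq V(G)$. Summing this over all edges and using the standard identity
\[ \sum_{uv \in E(G)} (d(u) + d(v)) = \sum_{v \in V(G)} d(v)^2, \]
which is valid because each vertex $v$ is summed $d(v)$ times with weight $d(v)$, yields
\[ 3t(G) \ge \sum_{v \in V(G)} d(v)^2 - n \cdot m. \]

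The final step is to bound $\sum_v d(v)^2$ from below by the Cauchy--Schwarz (or power-mean) inequality, giving $\sum_v d(v)^2 \ge \frac{1}{n}\bigl(\sum_v d(v)\bigr)^2 = \frac{4m^2}{n}$. Substituting and rearranging produces
\[ 3t(G) \ge \frac{4m^2}{n} - nm = \frac{4m}{n}\left(m - \frac{n^2}{4}\right), \]
which after dividing by $3$ is exactly the stated inequality. The proof is entirely routine; there is no real obstacle, only the need to verify the degree-sum identity and to apply Cauchy--Schwarz in the correct direction.
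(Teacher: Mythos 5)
Your proof is correct and in essence the same double-counting argument the paper gives: both reduce to the bound $3t(G)\ge\sum_{v}d(v)^2-mn$ and then apply Cauchy--Schwarz to get $\sum_v d(v)^2\ge 4m^2/n$. The only cosmetic difference is that you sum $|N(u)\cap N(v)|$ over edges with the inclusion--exclusion bound $|N(u)\cap N(v)|\ge d(u)+d(v)-n$, whereas the paper sums $e(N_v)$ over vertices and drops the nonnegative term $\sum_v e(N_v^c)$ --- these are two phrasings of the same slack.
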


Two proofs of Theorem \ref{thm-MM-k3} 
can be found in \cite[p. 297]{Bollobas78} and \cite[p. 443]{Lov1979}. 
For convenience of the readers, we provide a different proof as follows. 
 
\begin{proof}  
For each $v\in V(G)$, we denote $N_v= N(v)$ and $N_v^c = V(G) \setminus N_v$. Then 
\[ \sum_{w\in N_v} d(w) = 2e(N_v) + e(N_v, N_v^c) = 
e(N_v) + m - e(N_v^c). \]    
Note that  $3t(G) =\sum_{v\in V}e(N_v)$.  
Summing the  above equalities, we get 
\[    \sum_{v\in V} \sum_{w\in N_v} d(w)  =
 3t(G) + mn  - 
\sum_{v\in V} e(N_v^c). \]
On the other hand, 
we obtain from Cauchy--Schwarz's inequality that  
\[   \sum_{v\in V} \sum_{w\in N_v} d(w)  
= \sum_{w\in V} d(w)^2 \ge \frac{4m^2}{n}.  \] 
Combining the two inequalities, 
we have 
$ 3t(G) \ge \frac{4m^2}{n} - mn $, as needed. 
\end{proof}

Theorem \ref{thm-MM-k3} 
gives a lower bound on the number of triangles in a graph 
with size greater than $n^2/4$.
As a byproduct of this result, 
we see that an $n$-vertex graph with $n^2/4 +1$ edges 
contains at least $n/3$ triangles. Moreover, 
we can obtain the following supersaturation for triangles, 
which will be used in our proof of Theorem \ref{thm-bowtie}. 

\begin{corollary} \label{cor-MM}
If $\varepsilon >0$ and $G$ is a graph 
on $n$ vertices with 
\[  e(G) \ge \frac{n^2}{4} + \varepsilon n^2,  \]
then $G$ contains more than $\frac{\varepsilon}{3}n^3$ triangles. 
\end{corollary}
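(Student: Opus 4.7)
The plan is to derive this directly from the Moon--Moser inequality stated in Theorem \ref{thm-MM-k3}, since the corollary is essentially a quantitative restatement of that bound under the hypothesis $m \ge n^2/4 + \varepsilon n^2$. First I would substitute $m = e(G)$ into the Moon--Moser bound
\[
t(G) \ge \frac{4m}{3n}\left(m - \frac{n^2}{4}\right),
\]
and note that the hypothesis immediately gives $m - \tfrac{n^2}{4} \ge \varepsilon n^2$.

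Next I would bound the prefactor $\tfrac{4m}{3n}$ from below. Using only the trivial consequence $m \ge n^2/4$ of the hypothesis yields $\tfrac{4m}{3n} \ge \tfrac{n}{3}$, and combining this with the previous step gives
\[
t(G) \ge \frac{n}{3} \cdot \varepsilon n^2 = \frac{\varepsilon}{3} n^3.
\]
To upgrade this weak inequality to a strict one, I would keep the stronger bound $m \ge \tfrac{n^2}{4} + \varepsilon n^2$ in the prefactor, obtaining
\[
t(G) \ge \left(\frac{n}{3} + \frac{4\varepsilon n}{3}\right)\cdot \varepsilon n^2 = \frac{\varepsilon}{3} n^3 + \frac{4\varepsilon^2}{3} n^3,
\]
and since $\varepsilon > 0$ the extra term $\tfrac{4\varepsilon^2}{3}n^3$ is positive, giving the strict inequality claimed.

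There is no real obstacle here: the only thing to be mildly careful about is whether the statement requires strict inequality (``more than''), which is handled automatically by retaining the $\varepsilon n^2$ term in the prefactor rather than discarding it. The proof is a one-line corollary of Theorem \ref{thm-MM-k3}.
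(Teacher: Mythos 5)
Your proof is correct and is exactly the intended derivation: the paper presents Corollary~\ref{cor-MM} as an immediate consequence of Theorem~\ref{thm-MM-k3} without spelling out the substitution. Your extra care in retaining the full lower bound $m \ge n^2/4 + \varepsilon n^2$ in the prefactor $\tfrac{4m}{3n}$ to secure the strict inequality ``more than $\tfrac{\varepsilon}{3}n^3$'' is precisely the right observation and handles the one point the paper leaves implicit.
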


\subsection{The supersaturation-stability}

For a real number $\varepsilon >0$, 
we say that a graph $G$ is {\it $\varepsilon$-far from being bipartite}  
if  $G'$ is not bipartite 
for every subgraph $G'$ of $G$ with 
$e(G')>  e(G) - \varepsilon$. 
That is to say, if $G$ is $\varepsilon$-far from being bipartite, 
then no matter how we delete less than $\varepsilon$ edges from $G$, 
the resulting graph is always non-bipartite. 
Consequently, we must remove at least $\varepsilon$ edges from 
$G$ to make it bipartite.  
The following theorem was proved by Balogh et al. \cite{BBCLMS2017}.

\begin{theorem}[See \cite{BBCLMS2017}] \label{thm-far-bipartite} 
If $G$ is $\varepsilon$-far from being bipartite, then 
\[ t(G) \ge \frac{n}{6}\left( m + \varepsilon -\frac{n^2}{4}\right).  \] 
\end{theorem}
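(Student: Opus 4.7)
The plan is to combine the max-cut structure of $G$ with the Moon--Moser double counting from Theorem~\ref{thm-MM-k3}. Let $(A,B)$ be a maximum cut of $G$, with $e_A = e(G[A])$, $e_B = e(G[B])$, and $e_{AB} = e(A,B)$. Deleting all internal edges produces a bipartite subgraph, so the $\varepsilon$-far hypothesis forces $e_A + e_B \ge \varepsilon$. The max-cut property also yields $d_B(v) \ge d_A(v)$ for every $v \in A$, and symmetrically for $v \in B$. In particular, $e_{AB} \le |A||B| \le n^2/4$, which together with $e_A + e_B = m - e_{AB}$ gives $e_A + e_B \ge \max\{\varepsilon,\, m - n^2/4\}$; this double lower bound on the number of internal edges will drive the argument.

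Next I would lower bound $t(G)$ by counting triangles through internal edges. Every triangle of $G$ has either one or three internal edges, so writing $\iota(\Delta) \in \{1,3\}$ for this multiplicity, we have $\sum_{e\,\text{internal}} t_G(e) = \sum_\Delta \iota(\Delta) \le 3\, t(G)$. For each internal edge $uv \in E(G[A])$, the triangles through $uv$ with apex in $B$ number at least $d_B(u) + d_B(v) - |B|$, and similarly for edges in $E(G[B])$. Summing yields
\[ 3\,t(G) \ \ge\ \sum_{v \in A} d_A(v)\,d_B(v) \ +\ \sum_{v \in B} d_A(v)\,d_B(v) \ -\ |B|\,e_A \ -\ |A|\,e_B. \]
Paired with the Moon--Moser inequality from Theorem~\ref{thm-MM-k3}, which delivers the bulk contribution $\tfrac{n(m-n^2/4)}{3}$ via $\sum_v d(v)^2 \ge 4m^2/n$, this produces the two ingredients needed for the desired estimate.

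The main obstacle is to combine these estimates additively to obtain the full bound $\tfrac{n}{6}\bigl(m+\varepsilon-\tfrac{n^2}{4}\bigr)$ rather than merely the maximum of the two. The delicate point is that vertices incident to internal edges may have their degrees concentrated almost entirely in the opposite part (as in $T_{n,2}$ plus one edge inside a part), so the simple max-cut inequality $d_B(v) \ge d(v)/2$ is very lossy; one must instead control the quantities $\sum_v d_A(v)\,d_B(v)$ and the sums $\sum_v d(v)^2$ jointly, possibly via a Cauchy--Schwarz-type bound that respects the cut. Closing the estimate exploits $e_A + e_B \ge m - n^2/4$ to trade between the $\varepsilon$-far contribution and the Moon--Moser bulk contribution, yielding the claimed inequality.
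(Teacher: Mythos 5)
Your plan has a genuine gap, and it differs in an essential way from the argument the paper relies on (the one in \cite{BBCLMS2017}). You apply the $\varepsilon$-far hypothesis to a \emph{single} bipartition, the max cut $(A,B)$, obtaining the one scalar inequality $e_A+e_B\ge\varepsilon$. To produce the target bound you would then need roughly $n/6$ triangles per internal edge, and this is simply not implied by max-cut information alone: if the two endpoints of an internal edge have small total degree, the lower bound $d_B(u)+d_B(v)-|B|$ on common $B$-neighbours is negative and contributes nothing. Concretely, for a triangle plus isolated vertices (which is $1$-far from bipartite), the right-hand side of your displayed inequality is $4-n<0$, so that inequality is vacuous; more generally, nothing in your max-cut estimate ties the quantity $\varepsilon$ to a factor of $n$. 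You acknowledge this as the ``main obstacle,'' but the closing sentence --- ``trade between the $\varepsilon$-far contribution and the Moon--Moser bulk contribution'' --- does not name a mechanism, and I do not see one that works from a single cut.

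The step you are missing is to invoke the $\varepsilon$-far hypothesis not once but $n$ times, once for each vertex $v$, applied to the partition $\bigl(N(v),\,V\setminus N(v)\bigr)$. This gives $e(N_v)+e(N_v^c)\ge\varepsilon$ for \emph{every} $v$, and since $\sum_v e(N_v)=3t(G)$, summing these $n$ inequalities directly injects the needed factor $n\varepsilon$ into a triangle count. The rest is exactly the Moon--Moser double-counting you already have in mind: one computes $\sum_{v}\sum_{w\in N_v^c}d(w)$ two ways, uses $\sum_w d(w)^2\ge 4m^2/n$, and finishes with the elementary inequality $4m^2/n\ge 2mn-n^3/4$. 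So the calculation engine in your second half is the right one, but the input from the $\varepsilon$-far hypothesis must come from the family of neighbourhood partitions, not from one max cut.
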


Theorem \ref{thm-far-bipartite} can imply the supersaturation on triangles 
for graphs with less than $n^2/4$ edges. 
Namely, if $G$ is an $n$-vertex graph with 
$n^2/4 - q$ edges, and $G$ has less than $kn$ triangles, then 
$G$ is not $(6k +q)$-far from being bipartite. 
Consequently, we can remove less than 
$6k+q$ edges from $G$ to make it bipartite, i.e., $G$ has a large bipartite subgraph with more than $n^2/4 - 2q-6k$ edges. 
This result enables us to avoid the use of the triangle removal lemma 
so that we can get rid of the condition where the order $n$ is sufficiently large.  
We refer to 
\cite{LFP2024-triangular} for some related applications. 
As a matter of fact, finding a large bipartite subgraph 
plays an important role in the proofs of   
Theorems \ref{thm-n-2} and \ref{thm-bowtie}.

\subsection{Computations for extremal graphs}

The extremal graphs in our problem are 
not symmetric, 
and a lot of calculations on the spectral radii of graphs are needed. 
We give the following basic lemmas without proofs. 

\begin{lemma} \label{lem-root}
Suppose that $f(x)$ is a real polynomial function  with the largest root 
$\gamma$ in 
$(\frac{n}{2}-1, \frac{n}{2}+1)$, and $f(x)$ increases monotonically in $(\frac{n}{2}-1, \frac{n}{2}+1)$. 
If $x_0\in (\frac{n}{2}-1, \frac{n}{2}+1)$ such that $f(x_0) >0$, then $\gamma < x_0$; 
if $f(x_0) <0$, then $x_0< \gamma$. 
\end{lemma}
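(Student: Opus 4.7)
The plan is to exploit the fact that monotonicity on $I := (\frac{n}{2}-1,\frac{n}{2}+1)$ completely pins down the sign of $f$ on either side of its root in $I$. First I would observe that since $f$ is monotonically increasing on $I$, it can have at most one zero in $I$, and by hypothesis this unique zero is $\gamma$. Monotonicity then forces $f(x) < f(\gamma) = 0$ for every $x \in I$ with $x < \gamma$, and $f(x) > f(\gamma) = 0$ for every $x \in I$ with $x > \gamma$.

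Both conclusions of the lemma then follow by contrapositive. If $f(x_0) > 0$, then $x_0$ cannot satisfy $x_0 \le \gamma$, because every such point in $I$ would give $f(x_0) \le 0$; hence $\gamma < x_0$. Symmetrically, $f(x_0) < 0$ forces $x_0 < \gamma$. There is no genuine obstacle here; the statement is essentially the intermediate value theorem repackaged as a convenient numerical bracketing tool, which will be invoked repeatedly in later sections to locate the spectral radii of the candidate extremal graphs by evaluating their characteristic polynomials at carefully chosen test points $x_0$ near $n/2$.
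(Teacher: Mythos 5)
Your argument is correct. The paper explicitly omits a proof of this lemma (it states ``We give the following basic lemmas without proofs''), and your elementary monotonicity argument — the unique zero $\gamma$ of an increasing $f$ on the interval separates the region where $f<0$ from the region where $f>0$, so the sign of $f(x_0)$ determines which side of $\gamma$ the test point $x_0$ lies on — is exactly the intended justification. One small nitpick: you invoke ``the intermediate value theorem,'' but the argument as written uses only monotonicity (and the hypothesis that $\gamma$ is a root in the interval); IVT is not actually needed.
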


\begin{lemma} \label{lem-roots}
Let $f(x)$ and $g(x)$ be real polynomial functions with largest roots $\beta$ and $\gamma$, respectively.  
If  $\beta, \gamma \in (\frac{n}{2}-1, \frac{n}{2} +1)$, 
 $f(x)$ and $g(x)$ are increasing monotonically,  
and $f(x)> g(x)$ for any $x\in (\frac{n}{2}-1, \frac{n}{2} +1)$, 
then  $\beta < \gamma$. 
\end{lemma}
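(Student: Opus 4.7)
The plan is to reduce the inequality $\beta<\gamma$ to a single sign evaluation, by substituting $\gamma$ into $f$ and exploiting the pointwise hypothesis $f>g$ together with the monotonicity of $f$. Since $\gamma$ is the largest root of $g$, we have $g(\gamma)=0$, and the assumption $f(x)>g(x)$ on $(\frac{n}{2}-1,\frac{n}{2}+1)$ immediately yields $f(\gamma)>0$. Because $f$ is strictly increasing on this interval with $f(\beta)=0$, the sign of $f$ flips at $\beta$: one has $f(x)<0$ for $\frac{n}{2}-1<x<\beta$ and $f(x)>0$ for $\beta<x<\frac{n}{2}+1$. Combining these two observations, $f(\gamma)>0$ forces $\gamma>\beta$, which is precisely the desired conclusion.

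In practice the verification is a two-line sign chase, so the only real task is to organize the three ingredients cleanly: (i) evaluate $g$ at its own root to record $g(\gamma)=0$, (ii) transfer positivity across $f>g$ to obtain $f(\gamma)>0$, and (iii) pull back to $\gamma>\beta$ using that $\beta$ is the largest root of $f$ and that $f$ is monotone on the interval. A minor subtlety worth pinning down is the phrase \emph{largest root}: in line with Lemma \ref{lem-root}, I will read $\beta$ and $\gamma$ as the largest roots of $f$ and $g$ lying in $(\frac{n}{2}-1,\frac{n}{2}+1)$, which is the only regime that matters since these two numbers will be instantiated as spectral radii of extremal graphs on $n$ vertices, all of which satisfy $\lambda\in(\frac{n}{2}-1,\frac{n}{2}+1)$.

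There is no substantial obstacle here; the lemma is really a packaging device. Its intended role in the subsequent arguments is to compare spectral radii of competing extremal candidates, such as $K_{\lceil n/2\rceil,\lfloor n/2\rfloor}^{+2}$ versus its rivals, by comparing coefficients of the relevant characteristic polynomials rather than computing each $\lambda$ explicitly. Once the brief sign chase above is recorded, the lemma can be invoked mechanically in Sections \ref{sec4} and \ref{sec6} wherever one polynomial dominates another on the canonical interval.
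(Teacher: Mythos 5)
Your argument is correct and is the natural one; the paper in fact states Lemma~\ref{lem-roots} without proof (the authors label it a ``basic lemma without proof''), so there is no competing approach to compare against. The sign chase $f(\gamma) > g(\gamma) = 0 = f(\beta)$ together with the monotonicity of $f$ on $(\frac{n}{2}-1,\frac{n}{2}+1)$ and the hypothesis that both $\beta,\gamma$ lie in this interval immediately forces $\gamma > \beta$, exactly as you say.
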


The next lemma gives
a lower bound on the spectral radius of 
$K_{\lceil \frac{n}{2} \rceil, \lfloor \frac{n}{2} \rfloor}^{+2}$. 

\begin{lemma} \label{lem-F-G}
(a) If $n$ is even, then $\lambda (K_{\frac{n}{2},  \frac{n}{2}}^{+2})$ is the largest root of 
\[  f(x)= x^3 - x^2 - (n^2 x)/4 + n^2/4 - 2 n. \]
(b) If $n$ is odd, then 
$\lambda (K_{ \frac{n+1}{2}, \frac{n-1}{2}}^{+2})$ is the largest root of 
\[   g(x) = x^3 - x^2 + x/4 - (n^2 x)/4 + n^2/4 - 2 n + 7/4. \]
Furthermore, we get 
\[  \lambda^2 (K_{\lceil \frac{n}{2} \rceil, \lfloor \frac{n}{2} \rfloor}^{+2}) > \lfloor n^2/4 \rfloor  +4. \]
\end{lemma}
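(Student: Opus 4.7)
The plan is to exhibit an equitable tripartition of each graph, read off the $3\times 3$ quotient matrix, and verify that its characteristic polynomial is exactly the stated cubic; the lower bound on $\lambda^2$ then follows directly from the cubic relation together with the crude bound $\lambda > n/2$ coming from the fact that $K_{\lceil n/2\rceil, \lfloor n/2\rfloor}$ is a proper spanning subgraph.

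For part (a), let $V_1$ be the four endpoints of the two added edges (each of degree $n/2+1$), $V_2$ the remaining $n/2-4$ vertices in the enlarged part, and $V_3$ the other part of size $n/2$. This is the orbit partition under $\mathrm{Aut}(G)$ and is therefore automatically equitable; a direct degree count gives the quotient matrix
\[
B \;=\; \begin{pmatrix} 1 & 0 & n/2 \\ 0 & 0 & n/2 \\ 4 & n/2-4 & 0 \end{pmatrix},
\]
and expanding $\det(xI-B)$ along the first column produces exactly $f(x)$. Because the Perron eigenvector of $A(G)$ is constant on each orbit, $\lambda(K_{\frac{n}{2},\frac{n}{2}}^{+2})$ is the largest root of $f$. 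Part (b) is entirely analogous: the only modifications are $|V_2|=(n-7)/2$ and $|V_3|=(n-1)/2$, and the same expansion yields $g(x)$.

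For the final inequality in the even case, $f(\lambda)=0$ rearranges as
\[
\lambda^2(\lambda-1) \;=\; (n^2/4)(\lambda-1) + 2n, \qquad \text{hence} \qquad \lambda^2 \;=\; \frac{n^2}{4} + \frac{2n}{\lambda-1}.
\]
Since $\lambda>n/2$, it suffices to establish $\lambda<n/2+1$ to conclude $\lambda^2>n^2/4+4$. A one-line substitution gives $f(n/2+1) = n(n-3)/2 > 0$, and Lemma \ref{lem-root} (whose monotonicity hypothesis is clear from $f'(x) = 3x^2 - 2x - n^2/4 > 0$ on $(\tfrac{n}{2}-1,\tfrac{n}{2}+1)$) then delivers $\lambda<n/2+1$. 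The odd case is parallel: from $g(\lambda)=0$ I derive $\lambda^2 = (n^2-1)/4 + (2n-2)/(\lambda-1)$, and a short computation gives $g((n+1)/2) = (n-1)(n-7)/4 > 0$ for $n\ge 9$, yielding $\lambda<(n+1)/2$ and hence $\lambda^2 > \lfloor n^2/4\rfloor + 4$.

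I do not foresee any substantive obstacle: the proof is essentially mechanical once the equitable partitions are in place. The only points that demand any real care are verifying that $V_1,V_2,V_3$ is genuinely equitable (pure degree bookkeeping) and checking the monotonicity of $f$ and $g$ needed to invoke Lemma \ref{lem-root}, both of which are routine.
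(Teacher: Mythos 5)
Your derivation of the quotient matrices for (a) and (b) via the equitable (orbit) partition is exactly the paper's argument, and your computed characteristic polynomials agree with $f$ and $g$. For the final inequality you take a slightly different tack than the paper: where the paper evaluates $f$ and $g$ at the target value $\sqrt{\lfloor n^2/4\rfloor + 4}$ and shows it is negative (so the largest root lies to its right), you instead rewrite $f(\lambda)=0$ as $\lambda^2 = n^2/4 + 2n/(\lambda-1)$ (resp. $\lambda^2 = (n^2-1)/4 + (2n-2)/(\lambda-1)$), so that the desired inequality reduces to the upper bound $\lambda < n/2+1$ (resp. $\lambda < (n+1)/2$), which you establish by checking $f(n/2+1)=n(n-3)/2>0$ and $g((n+1)/2)=(n-1)(n-7)/4>0$ and invoking Lemma~\ref{lem-root}. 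Both routes are equally rigorous; your rearrangement is somewhat more transparent about why the constant $4$ appears (it is governed by the term $2n/(\lambda-1)$ with $\lambda$ pinned between $n/2$ and $n/2+1$), whereas the paper's direct substitution is shorter. One small point worth noting explicitly in a clean writeup: to apply Lemma~\ref{lem-root} you also use that $\lambda$ exceeds $n/2$ (even case) or $\sqrt{(n^2-1)/4}$ (odd case), which follows because the complete bipartite graph is a proper spanning subgraph — you gesture at this, and it does hold, but it is worth stating since the lemma's hypothesis requires the largest root to lie in the prescribed window.
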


\begin{proof}
(a) Let $\mathbf{x}=(x_1,x_2,\ldots ,x_n)^{\top}$ be the Perron eigenvector corresponding to $\lambda(K_{\frac{n}{2}, \frac{n}{2}}^{+2})$.
We partition the vertex set of $ K_{\frac{n}{2}, \frac{n}{2}}^{+2}$ as $\Pi$:
\[   V(K_{\frac{n}{2}, \frac{n}{2}}^{+2} )=
X_1 \cup X_2 \cup Y, \]
 where $X_1=\{u_1,u_2,u_3,u_4\}$ forms a matching of size two, 
$X_1\cup X_2$ and $Y$ are partite sets of $K_{\frac{n}{2}, \frac{n}{2}}$.
Compared with the neighborhoods, we can see that $x_{u_1}= \cdots = x_{u_4}$,
all the coordinates of the vector $\mathbf{x}$
corresponding to the vertices of $X_2$  are equal
(the coordinates of the vertices of $Y$ are equal).
Without loss of generality, we may assume that
$x_{u_1}=\cdots =x_{u_4}=a$, $x_u=b$ for each $u\in X_2$, and $x_v=c$
for each $v\in Y$. Then
\[  \begin{cases}
\lambda a= a + \frac{n}{2} c, \\
\lambda b = \frac{n}{2}c, \\
\lambda c = 4a + (\frac{n}{2}-4)b.
\end{cases} \]
Thus, $\lambda(K_{\frac{n}{2}, \frac{n}{2}}^{+2})$ is the largest eigenvalue of
\[  B_{\Pi} =  \begin{bmatrix}
1 & 0 & \frac{n}{2} \\
0 & 0 & \frac{n}{2} \\
4 & \frac{n}{2}-4 & 0
\end{bmatrix}.  \]
By calculation, we know that $\lambda(K_{\frac{n}{2}, \frac{n}{2}}^{+2})$ is the largest  root of
\[ f(x)= \det (xI_3 - B_{\Pi}) 
=x^3 - x^2 - (n^2 x)/4 + n^2/4 - 2 n.  \]
We mention that the partition $\Pi$ (into three parts) 
is called an {\it equitable partition}, and $B_{\Pi}$ is called the {\it quotient matrix} of $\Pi$. 
For a connected graph with an equitable partition $\Pi$, the spectral radius of the graph equals the spectral radius of the quotient matrix $B_{\Pi}$.

(b) For odd $n$, 
we partition the vertex set of $K_{\frac{n+1}{2}, \frac{n-1}{2}}^{+2}$
as $\Pi'$:
\[  V(K_{\frac{n+1}{2}, \frac{n-1}{2}}^{+2}) = X_1\cup X_2\cup Y, \]
where $X_1=\{u_1,u_2,u_3,u_4\}$ spans 
two disjoint edges, $X_1\cup X_2$ and $Y$
are partite sets of $K_{\frac{n+1}{2}, \frac{n-1}{2}}$
satisfying $|X_1| + |X_2|= \frac{n+1}{2}$
and $|Y|=\frac{n-1}{2}$. Then
a similar argument yields that
$\lambda (K_{\frac{n+1}{2}, \frac{n-1}{2}}^{+2})$ is the largest eigenvalue of
\[  B_{\Pi'}= \begin{bmatrix}
1 & 0 & \frac{n-1}{2} \\
0 & 0 & \frac{n-1}{2} \\
4 & \frac{n+1}{2}-4 & 0
\end{bmatrix}.  \]
Thus, $\lambda (K_{\frac{n+1}{2}, \frac{n-1}{2}}^{+2})$
is the largest root of
\[  g(x)= \det (xI_3- B_{\Pi'})
=x^3 - x^2 + x/4 - (n^2 x)/4 + n^2/4 - 2 n + 7/4.  \] 
Moreover, it is easy to check that 
\[ f(\sqrt{n^2/4 +4}) = 2 (-2 - n + \sqrt{16 + n^2}) <0  \]
and 
\[ g(\sqrt{(n^2-1)/4 +4}) 
=2 (-1 - n + \sqrt{15 + n^2}) <0. \]
By Lemma \ref{lem-root}, we get 
$ \sqrt{\lfloor n^2/4 \rfloor  +4} < 
\lambda (K_{\lceil \frac{n}{2} \rceil, \lfloor \frac{n}{2} \rfloor}^{+2}) $. 
This completes the proof.
\end{proof}

The following two lemmas will be used in the proof of Theorem \ref{thm-n-2}. 

\begin{lemma} \label{lem-plus2}
Suppose that $n\ge 20$ and 
$G$ is a subgraph of $K_{s,t}^{+2}$, where 
$\lfloor {n}/{2} \rfloor -2 \le s 
\le \lceil {n}/{2} \rceil +2$ and $t=n-s$. 
If $G$ has at most $n-4$ triangles, then 
$\lambda (G)< \lambda (T_{n,2})$. 
\end{lemma}

\begin{proof}
We prove the case $n$ even here, and 
 the case of $n$ odd can be proved 
 in a similar manner.   
If $s= \frac{n}{2} -2$, 
then $G$ is obtained from $K_{\frac{n}{2}-2, \frac{n}{2} +2}^{+2}$ 
by deleting at least $8$ cross-edges to destroy at least $8$ triangles.  Actually, one can compute that deleting any cross-edge yields a graph $L_1$ whose spectral radius is less than $\frac{n}{2}$; see Figure \ref{fig-L1-L6}.  
By the equitable partition analogous to the proof of  Lemma \ref{lem-F-G}, we know that $\lambda (L_1)$ is the largest root of 
\begin{align*}
 \ell_1(x) &:=17 x + 4 n x - (n^2 x)/2 - 9 x^2 - n x^2 + (n^2 x^2)/2 - 10 x^3 \\ 
& \quad -  2 n x^3 + (n^2 x^3)/4 + 4 x^4 - (n^2 x^4)/4 - x^5 + x^6. 
\end{align*}
Note that $\ell_1({n}/{2})=1/64 (544 n - 16 n^2 - 112 n^3 + 8 n^4) >0$. So we get $\lambda (L_1) < {n}/{2}=\lambda (T_{n,2})$. 

 \begin{figure}[H]
\centering
\includegraphics[scale=0.8]{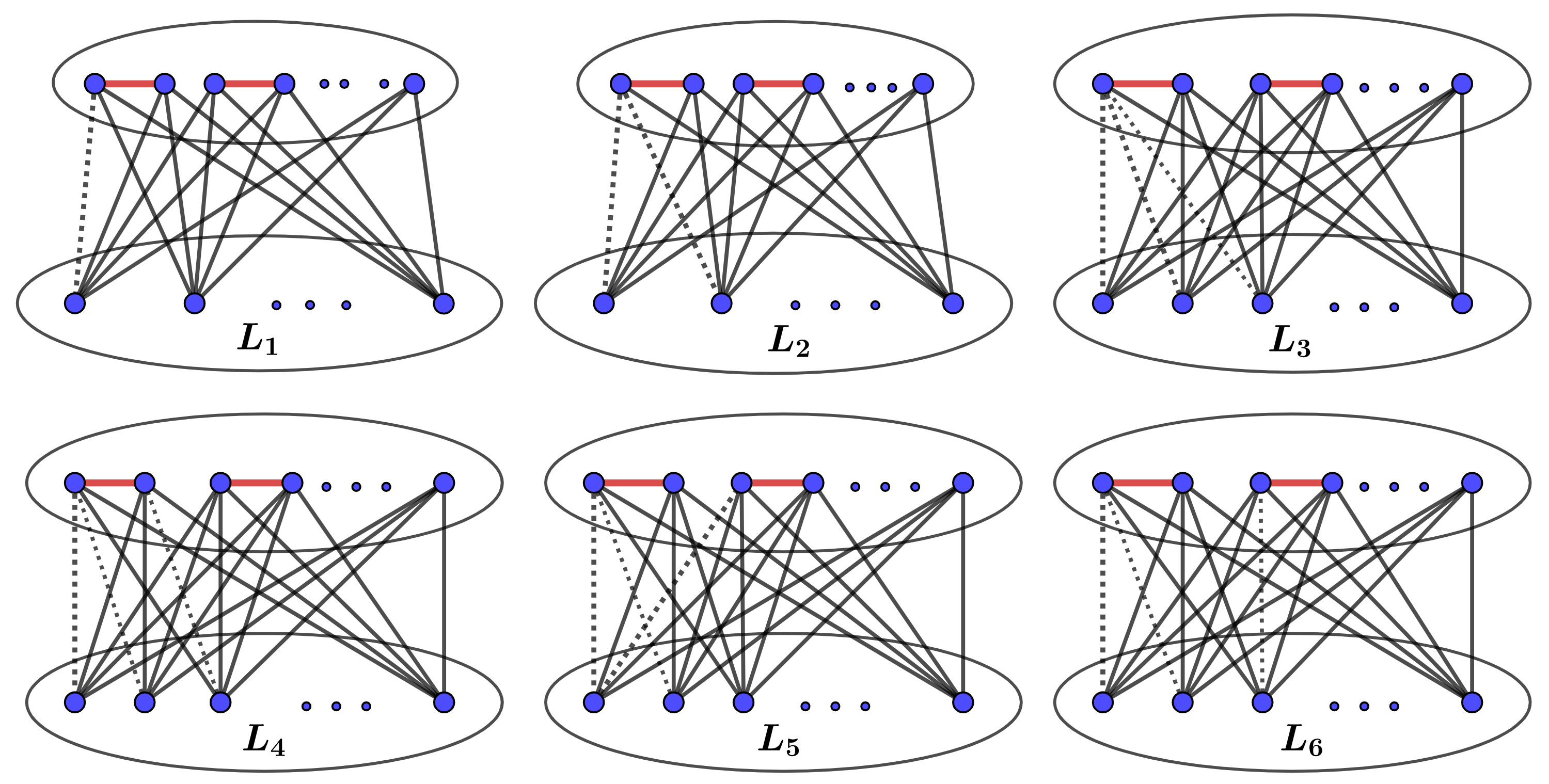} 
\caption{The graphs $L_1,\ldots ,L_6$.}
\label{fig-L1-L6}
\end{figure}

If $s=\frac{n}{2} -1$, then $G$ is obtained from 
$K_{\frac{n}{2}-1,\frac{n}{2}+1}^{+2}$ by deleting 
at least $6$ cross-edges in order to destroy at least $6$ triangles. 
Observe that there are at least two missing edges incident to 
the same vertex of a class-edge in the part $S$. 
Therefore, $G$ is a subgraph of $L_2$ in Figure \ref{fig-L1-L6}. 
 By calculation, we get that $\lambda (L_2)$ 
 is the largest root of 
\begin{align*}
  \ell_2(x)&:= 
  -3 x + 7 n x - (3 n^2 x)/4 - x^2 - 3 n x^2 + (3 n^2 x^2)/4 - 2 x^3 \\ 
  & \quad -  2 n x^3 + (n^2 x^3)/4 + 2 x^4 - (n^2 x^4)/4 - x^5 + x^6 .
 \end{align*} 
 Then  $\ell_2(n/2) = 
 1/64 (-96 n + 208 n^2 - 88 n^3 + 4 n^4) >0$ for $n\ge 20$, which yields 
 $\lambda (L_2) < n/2$. 
 
 If $s=\frac{n}{2}$, then $G$ is constructed from 
 $K_{\frac{n}{2},\frac{n}{2}}^{+2}$ by deleting at least $4$ cross-edges to remove at least $4$ triangles. 
 In fact, as long as we delete $3$ cross-edges (to remove $3$ triangles), we can show that 
 the resulting graph has spectral radius less than $\frac{n}{2}$; 
 see the graphs $L_3,L_4,\ldots,L_8$. 
Upon computation, we obtain that $\lambda (L_3)$ is the largest root of 
 \begin{align*}
  \ell_3(x)&:= 
  -27 x + 11 n x - n^2 x + 3 x^2 - 6 n x^2 + n^2 x^2 + 4 x^3 \\ 
  & \quad -  2 n x^3 + (n^2 x^3)/4 + 2 x^4 - (n^2 x^4)/4 - x^5 + x^6. 
   \end{align*}  
 It gives $\ell_3(n/2) = 1/64 (-864 n + 400 n^2 - 96 n^3 + 8 n^4) >0$. Thus $\lambda (L_3) < n/2$. 
 Similarly, we can compute that 
 $\lambda (L_4)$ is the largest root of 
  \begin{align*}
   \ell_4(x) &:= 24 - 7 n + n^2/2 - 28 x + 7 n x - (n^2 x)/2 - 5 x^2 + 7 n x^2 - 
 n^2 x^2 - 3 x^3 \\ 
 & \quad - 4 n x^3 + n^2 x^3 + 4 x^4 - 2 n x^4 + (
 n^2 x^4)/4 + 2 x^5 - (n^2 x^5)/4 - x^6 + x^7. 
    \end{align*}   
 Since $\ell_4(n/2)= 
 1/128 (3072 - 2688 n + 352 n^2 + 144 n^3 - 64 n^4 + 8 n^5) >0$, 
 we obtain $\lambda (L_4) < n/2$. 
 For the graph $L_5$, we compute that 
 $\lambda (L_5)$ is the largest root of 
 \begin{align*} 
  \ell_5(x)&:=  -15 x^2 + 10 n x^2 - (5 n^2 x^2)/4 - 10 x^3 + 3 n x^3 + 6 x^4  \\ 
  &\quad - 7 n x^4 + (5 n^2 x^4)/4 + 6 x^5 - 2 n x^5 + x^6 - (n^2 x^6)/4 + x^8. 
  \end{align*}
 It is easy to check that 
 $\ell_5(n/2) = 1/256 (-960 n^2 + 320 n^3 + 112 n^4 - 64 n^5 + 8 n^6) >0$. Then we get $\lambda (L_5)< n/2$. 
 For the graph $L_6$, we can obtain that $\lambda (L_6)$ is the largest root of 
  \begin{align*} 
   \ell_6(x) &:=  -10 x^2 + 7 n x^2 - (3 n^2 x^2)/4 - 11 x^3 + 4 n x^3 + 2 x^4 \\ 
   & \quad  - 5 n x^4 + n^2 x^4 + 5 x^5 - 2 n x^5 + x^6 - (n^2 x^6)/4 + x^8. 
     \end{align*} 
    Since $\ell_6(n/2) = 1/256 (-640 n^2 + 96 n^3 + 112 n^4 - 40 n^5 + 4 n^6)>0$, we have $\lambda (L_6)< n/2$. 
   
     \begin{figure}[H]
\centering
\includegraphics[scale=0.8]{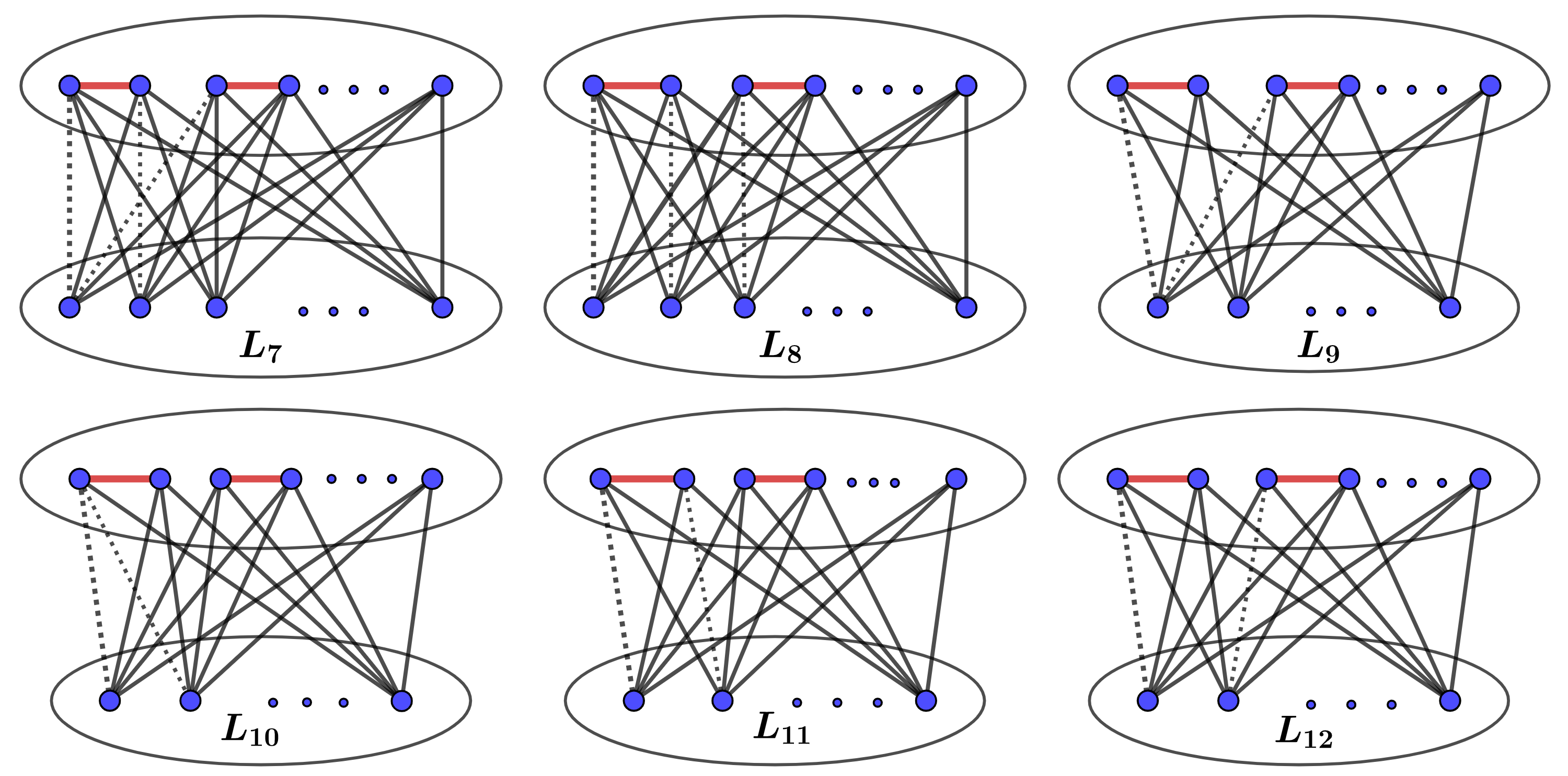} 
\caption{The graphs $L_7,\ldots ,L_{12}$.}
\label{fig-L7-L12}
\end{figure} 

\noindent 
    Similarly, for the graph $L_7$, 
    it is easy to verify that $\lambda (L_7)$ is the largest root of 
      \begin{align*} 
   \ell_7(x) &:= 
   8 - 3 n + n^2/4 - 8 x + n x - 18 x^2 + 12 n x^2 - (3 n^2 x^2)/2 - 
 2 x^3 + 2 n x^3 \\ 
 & \quad  + x^4  - 6 n x^4 + (5 n^2 x^4)/4 + 6 x^5 - 
 2 n x^5 + x^6 - (n^2 x^6)/4 + x^8, 
\end{align*} 
and $\ell_7(n/2) = 1/256 (2048 - 1792 n - 960 n^2 + 704 n^3 - 16 n^4 - 48 n^5 + 8 n^6)>0$. Then we obtain $\lambda (L_7)< n/2$. 
For the graph $L_8$, we get that $\lambda (L_8)$ 
is the largest root of 
\begin{align*} 
   \ell_8(x) &:= -17 x + 6 n x - (n^2 x)/2 - 5 x^2 + 5 n x^2 - (n^2 x^2)/2 - x^3 - 
 3 n x^3 \\
 &\quad + (3 n^2 x^3)/4  + 2 x^4 - 2 n x^4 + (n^2 x^4)/4 + 3 x^5 - (
 n^2 x^5)/4 - x^6 + x^7. 
 \end{align*} 
Note that $\ell_8(n/2) = 1/128 (-1088 n + 224 n^2 + 112 n^3 - 48 n^4 + 8 n^5)>0$. 
It follows that $\lambda (L_8) < n/2$. 
Therefore, we checked all possible graphs in the case $s=\frac{n}{2}$. 

If $s=\frac{n}{2} +1$, then $G$ is obtained from $K_{\frac{n}{2} +1, \frac{n}{2} -1}^{+2}$ by deleting at least $2$ edges to remove 
at least $2$ triangles. 
In this case, it is easy to obtain the following graphs in Figure \ref{fig-L7-L12}. Firstly, we consider the graph $L_9$ whose spectral radius is the largest root of 
\begin{align*} 
   \ell_9(x) :=7 x - 5 n x + (3 n^2 x)/4 + 8 x^2 - 2 n x^2 + 2 x^3 - (
 n^2 x^3)/4 + x^5.  
\end{align*} 
Since $\ell_9(n/2)= 1/32 (112 n - 16 n^2 + 4 n^3) >0$, 
we get $\lambda (L_9) < n/2$. Similarly, we denote 
\begin{align*} 
\ell_{10}(x) &:= -15 x + 7 n x - (3 n^2 x)/4 - 5 x^2 - 3 n x^2 + (3 n^2 x^2)/4  \\ 
& \quad + 6 x^3 - 2 n x^3 + (n^2 x^3)/4 + 2 x^4 - (n^2 x^4)/4 - x^5 + x^6, 
 \end{align*} 
and $\lambda (L_{10})$ is the largest root of $\ell_{10}(x)$. 
Note that $\ell_{10}(n/2) = 1/64 (-480 n + 144 n^2 - 24 n^3 + 4 n^4) >0$. Then $\lambda (L_{10}) < n/2$. 
For the graph $L_{11}$, we get that $\lambda (L_{11})$ 
is the largest root of 
\begin{align*} 
\ell_{11}(x) &:= -9 + 3 n - n^2/4 - x^2 - 2 n x^2 + (n^2 x^2)/2  + 5 x^3 - (n^2 x^3)/4 - 2 x^4 + x^5. 
 \end{align*} 
 Since $\ell_{11}(n/2)= 1/32 (-288 + 96 n - 16 n^2 + 4 n^3)>0$, 
 we have $\lambda (L_{11}) < n/2$. 
By calculation, we know that $\lambda (L_{12})$ is the largest 
root of 
\begin{align*} 
\ell_{12}(x) &:= 6 x - 4 n x + (n^2 x)/2 + 8 x^2 - 2 n x^2 + 3 x^3 - (n^2 x^3)/4 + x^5,  
 \end{align*}  
 and $\ell_{12}(n/2) = 1/32 (96 n + 4 n^3) >0$. 
Thus, it follows that $\lambda (L_{12}) < n/2$. 
 
 If $s=\frac{n}{2} +2$, then $G$ is a subgraph of 
 $K_{\frac{n}{2} +2, \frac{n}{2} -2}^{+2}$. 
 By direct computation, we obtain that 
 $\lambda (K_{\frac{n}{2} +2, \frac{n}{2} -2}^{+2})$ 
 is the largest root of $\ell_{13}(x):=4 - 2 n + n^2/4 + 4 x - (n^2 x)/4 - x^2 + x^3$ and $\ell_{13}(n/2) = 4>0$. 
It follows that $\lambda (G)\le \lambda (K_{\frac{n}{2} +2, \frac{n}{2} -2}^{+2}) < n/2$, as needed. 
\end{proof}

From the above computations, we can observe an interesting phenomenon.  
All graphs in Figures \ref{fig-L1-L6} and \ref{fig-L7-L12} 
have at most $e(T_{n,2}) -1$ edges and the spectral radius less than $\lambda (T_{n,2})$. 
Recall that the graphs in Figure \ref{fig-six} have exactly $e(T_{n,2})$ edges, while the spectral radius is greater than $\lambda (T_{n,2})$. 
Roughly speaking, for such nearly regular graphs, 
the spectral radius is very close to the average degree.

Let $K_{s,t}^{++}$ denote the graph formed by adding exactly one additional edge to each partite set of the complete bipartite graph $K_{s,t}$.   
The following lemma is similar to Lemma \ref{lem-plus2}. 
The proof proceeds by direct computations, so we omit the details. 

\begin{lemma} \label{lem-plus1}
Suppose that $n\ge 113$ and $G$ is a subgraph of $K_{s,t}^{++}$, where 
$\lfloor {n}/{2} \rfloor -2 \le s 
\le \lceil {n}/{2} \rceil +2$ and $t=n-s$. 
If $G$ has at most $n-4$ triangles, then 
$\lambda (G)< \lambda (T_{n,2})$. 
\end{lemma}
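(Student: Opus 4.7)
The plan is to mirror the case analysis of Lemma \ref{lem-plus2} directly, exploiting the fact that $K_{s,t}^{++}$ differs from $K_{s,t}^{+2}$ only in how the two extra edges are placed, so the same equitable-partition machinery applies with only a handful of new polynomials to check.

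First I would split into the five cases $s \in \{\lfloor n/2 \rfloor - 2, \lfloor n/2 \rfloor - 1, \lfloor n/2 \rfloor, \lceil n/2 \rceil + 1, \lceil n/2 \rceil + 2\}$ and, for each, read off the triangle count of $K_{s,t}^{++}$ in terms of $s$ and $t$. From this one computes the minimum number of cross-edge deletions required to bring the triangle count down to at most $n-4$, matching the counts $8, 6, 4, 2, 0$ that appear in the proof of Lemma \ref{lem-plus2}. Then I would enumerate, up to symmetry, the non-isomorphic \emph{maximal} edge-deletion patterns achieving exactly this bound; because further deletions only decrease the spectral radius, it suffices to verify $\lambda(G) < \lambda(T_{n,2})$ on this finite list. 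The combinatorial difference from Lemma \ref{lem-plus2} is that the two internal edges in $K_{s,t}^{++}$ are adjacent rather than disjoint, so the shared endpoint of the $P_3$ is distinguished and the symmetry classes of candidate configurations must be re-enumerated from scratch.

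For each candidate $G$, the vertex set admits an equitable partition $\Pi$ into five or six cells: the distinguished shared endpoint of the internal edges, its two internal neighbors, the remaining vertices of the same side, the cross-side, and possibly a further cell distinguishing the side endpoints of the deleted cross-edges. By Lemma \ref{lem-root} applied to the characteristic polynomial $\ell(x) = \det(xI - B_\Pi)$ of the quotient matrix, which is monotonically increasing in the relevant interval, it suffices to verify $\ell(n/2) > 0$ to conclude $\lambda(G) < n/2 \le \lambda(T_{n,2})$. Each $\ell(n/2)$ will be a polynomial in $n$ with positive leading coefficient, and the hypothesis $n \ge 113$ absorbs the subleading contributions. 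The odd-$n$ case is handled identically by evaluating the same polynomials at $\sqrt{(n^2-1)/4}$ in place of $n/2$, which only shifts lower-order terms.

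The main obstacle is purely bookkeeping: carefully writing down the complete list of non-isomorphic minimal cross-edge deletion patterns in each case, and for each one setting up the correct equitable partition and quotient matrix. Once the finite family of witness configurations is fixed, the polynomial evaluations are mechanical and essentially parallel to the computations $\ell_1, \ldots, \ell_{13}$ carried out in detail for Lemma \ref{lem-plus2}. I would therefore follow the authors and omit the tedious arithmetic.
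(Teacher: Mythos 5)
There is a genuine gap in your proposal, and it comes before any computation: you have misidentified the graph $K_{s,t}^{++}$. You write that ``the two internal edges in $K_{s,t}^{++}$ are adjacent rather than disjoint, so the shared endpoint of the $P_3$ is distinguished,'' and you build your equitable partition and your triangle-count bookkeeping around that picture. But the $P_3$ configuration cannot be what the lemma is about: Lemma \ref{lem-S+T=2} proves that the two class-edges of $G[S]\cup G[T]$ are vertex-disjoint precisely because two adjacent class-edges in one part would give $\tau_3(G)=1$, contradicting the hypothesis $\tau_3(G)\ge 2$, so that case never reaches Lemmas \ref{lem-plus2} or \ref{lem-plus1}. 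Looking at how the two lemmas are invoked at the end of Section \ref{sec4} --- ``either $e(S)=2$ or $e(S)=e(T)=1$'' --- the first alternative is Lemma \ref{lem-plus2} (two disjoint edges in the same part, $K_{s,t}^{+2}$), so $K_{s,t}^{++}$ must denote $K_{s,t}$ with one extra edge embedded in \emph{each} part, one in $S$ and one in $T$.

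This misreading propagates through the rest of the plan. With one edge $u_1u_2$ in $S$ and one edge $v_1v_2$ in $T$, the graph $K_{s,t}^{++}$ has exactly $s+t=n$ triangles for \emph{every} admissible $s$ (each class-edge sits in one triangle per vertex of the opposite part, and no triangle can contain both), so the required number of triangles to destroy is $4$ in every case, not the decreasing sequence $8,6,4,2,0$ you borrowed from Lemma \ref{lem-plus2}. Moreover a single deleted cross-edge of the form $u_iv_j$ with $u_i\in\{u_1,u_2\}$ and $v_j\in\{v_1,v_2\}$ kills two triangles at once, so the enumeration of minimal deletion patterns is structurally different from the $K_{s,t}^{+2}$ case. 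Finally, the natural equitable partition here is $\{u_1,u_2\}$, $S\setminus\{u_1,u_2\}$, $\{v_1,v_2\}$, $T\setminus\{v_1,v_2\}$ (further refined by the endpoints of deleted cross-edges), not the cell structure you describe around a distinguished $P_3$ center. The template of ``case-split on $s$, enumerate minimal deletion patterns, build the quotient matrix, apply Lemma \ref{lem-root} at $n/2$'' is the right strategy and does parallel Lemma \ref{lem-plus2}, but it has to be re-run from the beginning with the correct target graph; as written, your proof verifies the wrong statement.
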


\section{Proof of Theorem \ref{thm-n-2}}
\label{sec4}

We now provide the proof of Theorem \ref{thm-n-2}.  
Assume that $G$ is an $n$-vertex graph with $\lambda (G) \ge \lambda (T_{n,2})$ and $\tau_3(G) \ge 2$. 
Suppose on the contrary  that 
$G$  has at most $n-4$ triangles. 
In the sequel, we will deduce a contradiction.  
Since $\lambda (T_{n,2})= \sqrt{\lfloor n^2/4\rfloor} $, we get 
$\lambda (G) > \frac{n-1}{2}$. 
It follows  from Lemma \ref{thm-BN-CFTZ-NZ} that 
\[   e(G)\ge \lambda^2- \frac{3t}{\lambda} 
> \lambda^2 - \frac{6t}{n-1} \ge   
\left\lfloor \frac{n^2}{4} \right\rfloor - \frac{6(n-4)}{n-1} . \]  
Note that $e(G)$ must be a positive integer. Then 
\[  e(G)\ge \left \lfloor \frac{n^2}{4} \right\rfloor  -5. \]

\begin{lemma} \label{rough-part}
There exists a partition $V(G)=S\cup T$ 
such that $e(S) + e(T)<12$. 
Furthermore, we have $e(S,T)\ge \lfloor n^2/4\rfloor -16$ 
and $n/2 -4 \le |S|, |T|\le n/2 +4$. 
\end{lemma}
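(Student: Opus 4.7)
The plan is to extract the required partition directly from the supersaturation-stability theorem (Theorem \ref{thm-far-bipartite}). The rough intuition is that because $G$ has at most $n-4 = O(n)$ triangles while $e(G) \ge \lfloor n^2/4 \rfloor - 5$ is only $O(1)$ below the Mantel threshold, removing just $O(1)$ edges from $G$ should already produce a bipartite subgraph, and that bipartition will serve as $(S,T)$.

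First I would show that $G$ is not $12$-far from being bipartite. Suppose, towards a contradiction, that it is. Then Theorem \ref{thm-far-bipartite} applied with $\varepsilon = 12$, together with $e(G) \ge \lfloor n^2/4 \rfloor - 5$ and the elementary fact $n^2/4 - \lfloor n^2/4 \rfloor \le 1/4$, would force
\[ t(G) \ge \frac{n}{6}\bigl(e(G) + 12 - n^2/4\bigr) \ge \frac{n}{6}\bigl(7 - \tfrac{1}{4}\bigr) = \frac{9n}{8}. \]
Since $9n/8 > n-4$ for every $n \ge 1$, this contradicts the standing assumption $t(G) \le n-4$. Hence there exists a bipartite spanning subgraph $G'$ of $G$ with $e(G') > e(G) - 12$, i.e., $e(G) - e(G') \le 11$.

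Taking the bipartition of $G'$ as the partition $V(G) = S \cup T$, every edge of $G$ absent from $G'$ must lie inside $S$ or inside $T$, so $e(S) + e(T) = e(G) - e(G') \le 11 < 12$, which is the first assertion. The same identity yields $e(S,T) = e(G') \ge e(G) - 11 \ge \lfloor n^2/4 \rfloor - 16$, giving the second. For the size claim, write $|S| = \frac{n}{2} + s$ and $|T| = \frac{n}{2} - s$, where $s$ is an integer (for even $n$) or a half-integer (for odd $n$). Then
\[ \frac{n^2}{4} - s^2 \;=\; |S|\,|T| \;\ge\; e(S,T) \;\ge\; \left\lfloor \frac{n^2}{4} \right\rfloor - 16, \]
which gives $s^2 \le 16 + (n^2/4 - \lfloor n^2/4 \rfloor) \le 65/4$, hence $|s| \le 4$; this yields $n/2 - 4 \le |S|, |T| \le n/2 + 4$.

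There is no substantive obstacle: the argument is a transparent application of Theorem \ref{thm-far-bipartite} followed by bookkeeping. The only items requiring mild care are the fractional-part correction for odd $n$ (so that one remembers to allow the extra $1/4$ in the last step) and verifying that the lower bound $n \ge 113$ is ample for the contradiction $9n/8 > n-4$, which it is with great room to spare.
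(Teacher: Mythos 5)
Your proof is correct and follows essentially the same route as the paper: invoke Theorem \ref{thm-far-bipartite} to rule out $G$ being $12$-far from being bipartite (the paper likewise gets $t(G) > n$, a contradiction with $t(G) \le n-4$), then take the resulting bipartition as $(S,T)$ and bound $e(S,T)$ and $|S|, |T|$ by direct bookkeeping and AM--GM. The only difference is that you track the fractional-part correction $n^2/4 - \lfloor n^2/4 \rfloor \le 1/4$ a bit more explicitly; this is harmless and matches the paper's conclusion.
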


\begin{proof}
 We claim that $G$ is not $12$-far from being bipartite. 
 Otherwise,  Theorem \ref{thm-far-bipartite} implies 
$ t(G)\ge \frac{n}{6} (e(G) +12 - \frac{n^2}{4} ) > n$, 
which is a contradiction with our assumption that 
$t(G)\le n-4$. Therefore, the graph $G$ is not 
$12$-far from being bipartite. In other words, there is a partition of
the vertex set of $G$ as $V(G)=S\cup T$ such that 
$e(S)+e(T)< 12$. Then 
\[  e(S,T) = e(G)-e(S) - e(T) \ge e(G) -11\ge \left\lfloor \frac{n^2}{4} \right\rfloor -16. \]    
By the AM-GM inequality, we get 
${n}/{2}-4 \le |S|,|T| \le {n}/{2} +4$. 
\end{proof}

\begin{lemma} \label{lem-S+T=2}
We have that $e(S) +e(T) =2$ and the two edges of $G[S]\cup G[T]$ are disjoint. 
\end{lemma}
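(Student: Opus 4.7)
The plan is to combine a triangle-counting argument with the covering hypothesis $\tau_3(G) \ge 2$. Write $|E'| := e(S)+e(T)$ and $M := |S||T| - e(S,T)$ for the number of missing cross-edges. From $e(G) \ge \lfloor n^2/4 \rfloor - 5$ together with the AM-GM bound $|S||T| \le \lfloor n^2/4 \rfloor$, one obtains $M \le |E'| + 5$. Setting $M_u := |T| - d_T(u)$ for $u \in S$, we have $\sum_{u\in S} M_u = M$, so $M_u + M_v \le M$ for any two vertices $u,v \in S$; symmetric statements hold in $T$.

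I would first rule out $|E'| \ge 3$ by counting triangles. For each added edge $uv \in E(G[S])$, inclusion--exclusion in $T$ gives
\[
|N(u) \cap N(v) \cap T| \;\ge\; d_T(u)+d_T(v) - |T| \;=\; |T| - M_u - M_v \;\ge\; |T| - M.
\]
Two distinct added edges in $E(G[S])$ share at most one vertex, so the triangles they produce (each with third vertex in $T$) have pairwise distinct vertex sets; the analogous count applies to $E(G[T])$, and the two types live on different side-configurations, so nothing is double-counted. Combined with $\min(|S|,|T|) \ge n/2 - 4$ from Lemma~\ref{rough-part}, this yields
\[
t(G)\;\ge\;|E'|\bigl(\tfrac{n}{2}-4-M\bigr)\;\ge\;|E'|\bigl(\tfrac{n}{2}-9-|E'|\bigr).
\]
For $3\le |E'| \le 11$ this is a downward-opening quadratic in $|E'|$ whose minimum on that range (for $n\ge 113$) is attained at $|E'|=3$, giving $\tfrac{3n}{2}-36 > n-4$, in contradiction with the standing assumption $t(G)\le n-4$. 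Hence $|E'| \le 2$.

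The remaining cases are eliminated using $\tau_3(G)\ge 2$. If $|E'|=0$, then $G$ is bipartite and $\tau_3(G)=0$. If $|E'|=1$, every triangle contains the unique added edge, and one of its endpoints alone covers all triangles, so $\tau_3(G)\le 1$. If $|E'|=2$ with the two edges sharing a vertex $u$, both edges must lie in the same part, and no triangle can contain both --- doing so would require a third added edge closing the path --- so every triangle contains $u$, again giving $\tau_3(G)\le 1$. Each of these contradicts the hypothesis, leaving only $|E'|=2$ with two vertex-disjoint added edges, which is precisely the claim.

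The main obstacle is the bookkeeping in the triangle count: one must ensure that the per-edge lower bound $|T| - M_u - M_v$ is simultaneously positive and non-overlapping across different added edges. Both are handled by the estimate $M \le |E'|+5 \le 16 \ll n/2$ for $n\ge 113$, and by the simple combinatorial fact that two added edges inside one part can share a triangle only when a third added edge closes the path, which does not occur when $|E'| \le 2$.
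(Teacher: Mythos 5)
Your argument is correct and follows essentially the same route as the paper's: rule out $e(S)+e(T)\ge 3$ by counting triangles anchored on the class edges (each class edge $uv$ yields $\ge |T|-M_u-M_v$ triangles with third vertex in the opposite part, and the few missing cross-edges only cost a constant), then dispatch $e(S)+e(T)\in\{0,1\}$ and the "two edges sharing a vertex" case directly from $\tau_3(G)\ge 2$. The only cosmetic difference is that you use the tighter bound $M\le e(S)+e(T)+5$ from $e(G)\ge\lfloor n^2/4\rfloor-5$ rather than the cruder $M\le 16$ from Lemma~\ref{rough-part}; both suffice.
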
 

\begin{proof} 
The condition $\tau_3(G)\ge 2$ means that $G$ contains at least one triangle. 
Then we have $e(S) +e(T)\ge 1$. 
Suppose on the contrary that $e(S) + e(T)\neq 2$. 
Firstly, if $e(S)=1$ or $e(T)=1$, then 
all triangles in $G$ can be  covered by one vertex, 
which contradicts with $\tau_3(G)\ge 2$. 
Now, suppose that $e(S)+ e(T)=s$, where $3\le s\le 11$. 
Observe that each missing cross-edges between $S$ and $T$ 
is contained in at most $s$ triangles. Then  
we have $ t(G) \ge s (\frac{n}{2} -4) - 16s > n-3$  for $n\ge 113$, 
which is a contradiction.  Thus, we must have 
$ e(S) + e(T)=2$. 
Furthermore, the two class-edges in $G[S]\cup G[T]$ are 
vertex-disjoint in $G$. Otherwise, there is a vertex that covers all triangles of $G$, contradicting with $\tau_3(G)\ge 2$.  
\end{proof}

To save calculations, we give a refinement on Lemma \ref{rough-part}. 

\begin{lemma} \label{lem-pm2}
We have $e(S,T)\ge \lfloor n^2/4\rfloor -7$ and 
$\lfloor n/2 \rfloor - 2 \le |S|, |T|\le \lceil n/2\rceil +2$. 
\end{lemma}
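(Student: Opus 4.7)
The plan is to derive this refinement directly from Lemmas~\ref{rough-part} and~\ref{lem-S+T=2} together with the lower bound $e(G) \ge \lfloor n^2/4 \rfloor - 5$ already established before Lemma~\ref{rough-part}. The statement splits naturally into two parts: a sharpening of the cross-edge count $e(S,T)$, followed by a corresponding sharpening of the part sizes obtained via a trivial bipartite inequality.

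First I would use the identity $e(G) = e(S) + e(T) + e(S,T)$. Since Lemma~\ref{lem-S+T=2} tells us $e(S) + e(T) = 2$, rearranging gives
\[
e(S,T) \;=\; e(G) - 2 \;\ge\; \left\lfloor \frac{n^2}{4} \right\rfloor - 7,
\]
which is the first claim and already improves the bound $\lfloor n^2/4\rfloor - 16$ from Lemma~\ref{rough-part}.

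Second, to sharpen the bounds on $|S|$ and $|T|$, I would use the trivial inequality $e(S,T) \le |S|\,|T|$, so that $|S|\,|T| \ge \lfloor n^2/4 \rfloor - 7$. Set $|S| = s$ and $|T| = n - s$. For even $n$, one has $s(n-s) = n^2/4 - (s - n/2)^2$, forcing $(s - n/2)^2 \le 7$ and hence $|s - n/2| \le 2$. For odd $n$, writing $n = 2m+1$ and $s = m + a$ with $a \in \mathbb{Z}$, a direct computation gives $s(n-s) = m(m+1) + a - a^2$, and the required inequality becomes $a(a-1) \le 7$, forcing $-2 \le a \le 3$. In both parities this yields precisely
\[
\left\lfloor \frac{n}{2} \right\rfloor - 2 \;\le\; |S|,\, |T| \;\le\; \left\lceil \frac{n}{2} \right\rceil + 2,
\]
completing the proof.

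There is no genuine obstacle here; the lemma is a short, routine consequence of the two preceding lemmas combined with the elementary identity for $s(n-s)$. The only mild care required is to treat the parities of $n$ separately when extracting the integer range of $|S|$ from the real inequality, which is handled by the substitution $n = 2m+1$ in the odd case.
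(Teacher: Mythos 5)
Your proof is correct and follows the same route as the paper: subtract $e(S)+e(T)=2$ from $e(G)\ge\lfloor n^2/4\rfloor-5$ to get the cross-edge bound, then use $e(S,T)\le |S|\,|T|$ to constrain the part sizes. The paper's version simply states the last step as immediate, whereas you spell out the parity analysis of $s(n-s)$ explicitly, which is a fine level of added detail.
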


\begin{proof}
In contrast to the proof of Lemma \ref{rough-part}, 
which heavily relied on Theorem \ref{thm-far-bipartite}, 
our current refinement no longer needs to invoke Theorem \ref{thm-far-bipartite}. 
By Lemma \ref{lem-S+T=2}, we get $e(S)+e(T)=2$, and then   
\[   e(S,T)= e(G) - e(S)- e(T) 
\ge \left\lfloor \frac{n^2}{4} \right\rfloor -7. \]  
Thus, it follows that 
$ \lfloor {n}/{2} \rfloor -2 \le |S|,|T| 
\le \lceil {n}/{2} \rceil +2$, as desired. 
\end{proof}

From Lemmas \ref{lem-S+T=2} and \ref{lem-pm2},  
we may assume by symmetry that either $e(S)=2$ or $e(S)=e(T)=1$.  
Then we get $\lambda (G)< \lambda (T_{n,2})$  
 by using Lemmas \ref{lem-plus2} 
and \ref{lem-plus1}. This contradicts with the previous assumption. 
So we complete the proof. \hfill $\square$

\section{Proof of Theorem \ref{thm-bowtie}}
\label{sec6}

\subsection{An overview of the proof}
In this section, we shall prove Theorem \ref{thm-bowtie}. 
Our solution applies the framework set up by the authors  \cite{LFP2024-triangular},  
and we need to put much effort into the analysis of the structure 
of expected extremal graph. 
We always assume that $G$ is a graph on $n$  vertices with 
$\lambda (G) \ge \lambda (K_{\lceil \frac{n}{2} \rceil, \lfloor \frac{n}{2} \rfloor}^{+2})$ 
and $G$ contains at most  $\lfloor n/2\rfloor $ bowties. 
Among such graphs, we choose $G$ as the graph with the maximum spectral radius. 
Our goal is to prove that $G$ 
is exactly 
the desired extremal graph $K_{\lceil \frac{n}{2} \rceil, \lfloor \frac{n}{2} \rfloor}^{+2}$. 
Before starting the proof in detail, we outline the main steps as follows. 

\begin{itemize}

\item[\ding{172}]
The first step is to obtain a bipartition of  vertices of $G$. 
Using a theorem of Alon and Shikhelman \cite{AS2016}, 
we will show that $G$ 
contains less than $10n^{2}$ triangles. 
Using Lemma \ref{thm-BN-CFTZ-NZ}, 
we can get $e(G) > {n^2}/{4} - 180n$. 
Applying the supersaturation-stability in Theorem \ref{thm-far-bipartite} , 
we obtain an approximate bipartition of $G$; see Lemma \ref{lem-partition}.

\item[\ding{173}]
The second step refines the structure of 
the desired extremal graph. 
We shall prove that $G$ has  few `bad' vertices that have large degree in their own part and have small degree in the other part.    
Moreover, we show 
 that $G$ is nearly a complete bipartite graph whose vertex parts are constant-sized around ${n}/{2}$; see Lemmas \ref{Lupper} --  \ref{STle3}.

\item[\ding{174}] 
Thirdly, we show that $G$ has at most 
$3({n}/{2} + 14\sqrt{n})$ triangles. 
This improves the previous result in the first step. 
Using Lemma \ref{thm-BN-CFTZ-NZ} again,  
we conclude that there exists a partition $V(G)=S\cup T$ such that 
$e(S,T)\ge \lfloor n^2/4\rfloor -8$ and ${n}/{2} -2 \le 
|S|, |T| \le {n}/{2} +2$; see Lemmas \ref{STlambdarefine} --  \ref{SorT=2}. 

\item[\ding{175}]
Finally, 
we shall prove either $e(S)=2$ or $e(T)=2$.  
With some calculations, we  determine that 
$G$ is the expected spectral extremal graph $K_{\lceil \frac{n}{2} \rceil, \lfloor \frac{n}{2} \rfloor}^{+2}$. 
\end{itemize}

\subsection{The approximate structure}

Recall that a set of $k$ pairwise disjoint edges is 
called {\it a matching of size $k$}.  
A famous theorem due to Erd\H{o}s and Gallai \cite{EG1959}  
states that if $n\ge \frac{5k+3}{2}$ and 
$G$ is an $n$-vertex graph with $e(G) > k(n-k) +{k \choose 2}$, 
then $G$ contains a  matching of size $k+1$. 

\medskip 
Assume that $G$ has less than  
$\lfloor {n}/{2}\rfloor $ bowties. 
Then we can give a bound on the number of triangles in $G$. 
Indeed, for each vertex $v\in V(G)$, 
the neighborhood $N(v)$ does not contain 
a matching of size $\sqrt{n} +1$.  
Setting $k=\sqrt{n}$ and applying the Erd\H{o}s--Gallai theorem, we get 
 $e(N(v)) < kn =n^{3/2}$. 
Using the double counting, it follows that 
$  3t(G) = \sum_{v\in V} e(N(v))  <  n^{5/2}$.  
Therefore, $G$ contains less than $\frac{1}{3}n^{5/2}$ triangles.  
In order to improve this bound, we introduce a 
result of Alon and Shikhelman \cite[Lemma 3.1]{AS2016}. 

\begin{lemma}[See \cite{AS2016}] \label{lem-AS}
Let $G$ be a graph on $n$ vertices.  
If $G$ is $F_k$-free, then 
\[ t(G)< (9k-15)(k+1)n. \]  
\end{lemma}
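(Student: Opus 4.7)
The plan is to prove $t(G) < (9k-15)(k+1)n$ via a double-counting argument driven by a simple local observation from $F_k$-freeness. First I would establish the \emph{key local constraint}: for every vertex $v \in V(G)$, the induced neighborhood $G[N(v)]$ cannot contain a matching of size $k$. Indeed, if $\{x_1y_1, \ldots, x_ky_k\}$ were such a matching, then $v$ together with these $2k$ endpoints would form $k$ triangles $vx_iy_i$ that pairwise share only the vertex $v$, giving a copy of $F_k$ centered at $v$. Hence the matching number of $G[N(v)]$ is at most $k-1$ for every $v$, and the classical Erd\H{o}s--Gallai theorem on matchings immediately yields
\[ t(v) := e(G[N(v)]) \le \max\left\{\binom{2k-1}{2},\, (k-1)d(v) - \binom{k}{2}\right\}. \]

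I would then induct on $n$. The base case $n \le (2k+1)^2$ is immediate from $t(G) \le \binom{n}{3}$. For the inductive step, the plan is to locate a vertex $v$ with $t(v) \le (9k-15)(k+1)$, delete it, and use that $G-v$ is still $F_k$-free on $n-1$ vertices, so induction gives $t(G) = t(G-v) + t(v) \le (9k-15)(k+1)n$. If $G$ contains a vertex of degree $d(v) \le D$, where $D$ is the largest integer with $\binom{D}{2} \le (9k-15)(k+1)$ (so $D = O(k)$), then trivially $t(v) \le \binom{d(v)}{2} \le (9k-15)(k+1)$, and the induction step proceeds.

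The hard part will be handling the complementary case in which the minimum degree exceeds $D$: here a naive deletion argument fails, and summing the Erd\H{o}s--Gallai estimate over $v$ only yields $3t(G) \le 2(k-1)e(G) + O(k^2n) = O(kn^2)$ when $e(G)$ is close to $n^2/4$, which is an order of $n/k$ off from the target. To close this gap one should apply Gallai's identity to obtain a vertex cover $U_v \subseteq N(v)$ of size at most $2k-2$ for each $G[N(v)]$, and then carefully double-count over the triples $(v,u,w)$ with $u \in U_v$ and $w \in N(u) \cap N(v)$. The crucial structural insight one must extract is that the simultaneous constraints on the matching numbers of all the $G[N(v)]$ force triangles to concentrate on a small ``spine'' of $O(k)$ vertices, paralleling the extremal construction $T_{n,2} + 2K_k$ in which every triangle uses at least one edge of the two added $K_k$'s and the total triangle count is $\binom{k}{2}n + 2\binom{k}{3} = O(k^2 n)$. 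Since the precise quantitative bound is already established as Lemma 3.1 of Alon--Shikhelman, a second option is simply to cite it as a black box.
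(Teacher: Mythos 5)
The paper does not prove this lemma at all: it is imported directly as Lemma~3.1 of Alon and Shikhelman \cite{AS2016}, so there is no in-paper argument to compare against. Your opening observation is correct and is indeed the right starting point --- $F_k$-freeness forces $\nu(G[N(v)])\le k-1$ for every $v$, whence Erd\H{o}s--Gallai bounds $t(v)=e(G[N(v)])$. But, as you yourself flag, the argument does not close. When the minimum degree is $\Theta(n)$ the deletion step is unavailable, and the global sum $3t(G)=\sum_v e(G[N(v)])\le 2(k-1)e(G)+O(k^2 n)$ only yields $O(kn^2)$, a factor $\Theta(n/k)$ above the target $O(k^2 n)$. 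The vertex-cover refinement does not by itself repair this: with $|U_v|\le 2k-2$, the triple count $\sum_v\sum_{u\in U_v}|N(u)\cap N(v)|$ is still $O(kn^2)$ absent further structural information. The claim that triangles ``concentrate on a spine of $O(k)$ vertices'' is essentially the conclusion you are trying to prove; naming it as ``the crucial structural insight one must extract'' is not an argument for it.

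A secondary flaw is the base case. For $n=(2k+1)^2$ one has $\binom{n}{3}=\Theta(k^6)$ while $(9k-15)(k+1)n=\Theta(k^4)$; already for $k=2$, $n=25$ this reads $\binom{25}{3}=2300$ versus $9\cdot 25=225$, so the crude bound $t(G)\le\binom{n}{3}$ does not establish the base case there. A correct base case would need $n=O(k)$, which only widens the range the (already broken) inductive step must cover. Citing Alon--Shikhelman as a black box, as the paper itself does, is of course fine; the sketched direct proof, however, does not stand on its own.
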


In what follows, we show that 
the extremal graph $G$ has an approximately correct structure. 
We can apply Lemma \ref{lem-AS} to 
show that $G$ has less than $10n^2$ 
triangles. 
Then Lemma \ref{thm-BN-CFTZ-NZ} implies 
that $G$ has more than $n^2/4 - 60n$ edges. 
So we can use Theorem \ref{thm-far-bipartite} and obtain an optimal partition of vertices of $G$.

\begin{lemma}\label{lem-partition}
There exists a vertex partition  $V(G)=S\cup T$ such that 
$$ e(S) + e(T) < 120n $$
and 
$$e(S, T)> \frac{n^2}{4} - 180n. $$
Furthermore, we have 
\[
\frac{n}{2} - 14\sqrt{n} < |S|, |T| < 
\frac{n}{2} + 14\sqrt{n}.
\]
\end{lemma}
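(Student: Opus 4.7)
The plan is to execute the four-step pipeline suggested by the excerpt: combinatorially bound $t(G)$ from above, convert this into an edge lower bound via the spectral triangle-counting lemma, invoke the supersaturation-stability theorem to extract a bipartition, and finally deduce the size balance from $e(S,T)\le|S||T|$.

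First, having at most $\lfloor n/2\rfloor$ bowties forces $G$ to be $F_k$-free for every $k$ with $\binom{k}{2}>\lfloor n/2\rfloor$, because a single copy of $F_k$ already contains $\binom{k}{2}$ bowties sharing its common vertex. Picking the smallest such $k$, namely roughly $k=\lceil\sqrt n\,\rceil+1$ (so that $\binom{k}{2}\ge (n+\sqrt n)/2 > n/2$), and feeding it into Lemma~\ref{lem-AS} yields $t(G)<(9k-15)(k+1)n$, which is of order $9n^{2}+O(n^{3/2})$ and hence below $10n^{2}$ once $n$ is at all large (certainly far below the threshold $n\ge 8.8\times 10^{6}$ fixed in Theorem~\ref{thm-bowtie}).

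Second, I use Lemma~\ref{thm-BN-CFTZ-NZ} in the rearranged form $e(G)\ge \lambda^{2}-3t(G)/\lambda$. Combined with the lower bound $\lambda^{2}>\lfloor n^{2}/4\rfloor+4$ from Lemma~\ref{lem-F-G}, which in particular gives $\lambda>n/2$, and with the triangle bound of Step~1, this produces $e(G)>\lfloor n^{2}/4\rfloor-60n$. Third, I apply Theorem~\ref{thm-far-bipartite} contrapositively: if $G$ were $120n$-far from being bipartite, the theorem would force $t(G)\ge (n/6)\bigl(e(G)+120n-n^{2}/4\bigr)\ge (n/6)(60n)=10n^{2}$, contradicting Step~1. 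Thus some partition $V(G)=S\cup T$ satisfies $e(S)+e(T)<120n$, and therefore $e(S,T)=e(G)-e(S)-e(T)>n^{2}/4-180n$.

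Finally, writing $|S|=n/2+x$ and $|T|=n/2-x$, the trivial estimate $e(S,T)\le|S||T|=n^{2}/4-x^{2}$ yields $x^{2}<180n$, whence $|x|<\sqrt{180n}<14\sqrt{n}$, which is exactly the balance claimed. The only delicate point in the whole outline is the choice of $k$ in Step~1: one must pick $k$ large enough that the bowtie hypothesis genuinely forces $F_k$-freeness, yet small enough that the Alon--Shikhelman bound $(9k-15)(k+1)n$ stays $O(n^{2})$. The choice $k=\Theta(\sqrt n)$ is the sweet spot and no extra extremal input is required beyond the lemmas already in the paper.
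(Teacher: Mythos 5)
Your proof is correct and follows essentially the same route as the paper: bound $t(G)$ by $10n^2$ via the bowtie hypothesis and Alon--Shikhelman, convert to $e(G)>n^2/4-60n$ via Lemma~\ref{thm-BN-CFTZ-NZ} and $\lambda>n/2$, invoke Theorem~\ref{thm-far-bipartite} to rule out $120n$-farness, and finish with $e(S,T)\le|S||T|$. The paper leaves the choice of $k$ in the Alon--Shikhelman step implicit (it is set up in the discussion preceding Lemma~\ref{lem-AS}), whereas you spell it out explicitly; otherwise the arguments coincide.
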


\begin{proof}
Note that $\lambda (G)\ge  \lambda (K_{\lceil \frac{n}{2} \rceil, \lfloor \frac{n}{2} \rfloor}^{+2}) > \frac{n}{2}$.  
By Lemma \ref{lem-AS}, we obtain 
\[  t(G)< 10n^2. \]
Then Lemma \ref{thm-BN-CFTZ-NZ} implies 
\[  e(G) \ge \lambda^2 - \frac{6t}{n} 
> \frac{n^2}{4} - 60n.  \]
We claim that $G$ is not $120n$-far from being bipartite.  
Otherwise, if $G$ is $120n$-far from being bipartite, then Theorem \ref{thm-far-bipartite} 
implies that $G$ has at least $\frac{n}{6}(\frac{n^2}{4} - 60n 
+ 120n - \frac{n^2}{4}) = 10n^2$ triangles, a contradiction. 
Therefore, $G$ is not $120n$-far from being bipartite. 
Thus, there exists a partition $V(G)=S\cup T$ of 
the vertices of $G$ such that    
\begin{equation*}  \label{eq-EST}
e(S) + e(T) < 120n. 
\end{equation*}
 Consequently, we have 
\[ e(S,T)> e(G) - 120n  
> \frac{n^2}{4} - 180n. \] 
Without loss of generality, we may assume that 
$1\le |S| \le |T|$. Suppose on the contrary that 
if $|S| \le \frac{n}{2} - 14\sqrt{n}$, then by $|S| + |T|=n$, we have 
$|T| \ge \frac{n}{2}  + 14\sqrt{n}$. It follows that 
\[ e(S,T)\le |S| |T| \le 
\left(\frac{n}{2} - 14\sqrt{n} \right)
\left(\frac{n}{2} + 14\sqrt{n} \right) 
< \frac{n^2}{4} - 196n, \] 
which is a contradiction. Thus, we obtain $|S| > 
\frac{n}{2} - 14\sqrt{n}$ and 
$|T| < \frac{n}{2} + 14\sqrt{n}$. 
        \end{proof}

Throughout the paper, we always assume that 
$V(G)=S\cup T$ is a partition with a maximum cut, i.e., the bipartite subgraph $G[S,T]$ has the maximum number of edges. Lemma \ref{lem-partition} guarantees that there exists such a partition with    $e(S,T) >\frac{1}{4} n^2 - 180n$ and 
        $\frac{n}{2} - 14\sqrt{n} < |S|, |T| < 
\frac{n}{2} + 14\sqrt{n}$.  
Unlike the argument in Section \ref{sec4}, 
the above bipartition is not enough for our purpose. 
Under this circumstance, the graph $G$ has many triangles, and 
applying Theorem \ref{thm-far-bipartite}  can only yield an  
approximate vertex partition. After the bipartition is obtained,  
the remaining task is to perform a more careful structural analysis. 
We need to show that the extremal graph is an almost-balanced bipartite graph.

In the sequel, 
we define two sets of ‘bad’ vertices of $G$. Namely, 
\[ L:= \left\{v\in V(G): d(v) \leq 
\left(\frac{1}{2}-\frac{1}{200} \right) n\right\} .\]  
        For a vertex $v\in V(G)$, let $d_S(v) = |N(v) \cap S|$ and $d_T(v) = |N(v) \cap T|$. We denote 
\[ W: = \left\{ v\in S: d_S(v) \geq  \frac{n}{150} \right\} \cup 
\left\{v \in T: d_T(v) \geq  \frac{n}{150} \right\}. \]
In order to show that 
$G$ is the expected extremal graph 
$K_{\lceil \frac{n}{2} \rceil, \lfloor \frac{n}{2} \rfloor}^{+2}$, 
we need to prove that 
the bad sets $L$ and $W$ are empty. 
Next, we show that $W$ and $L$ are small sets. 

\begin{lemma}\label{Lupper}  
We have 
$$|L|< \frac{n}{403}.$$
\end{lemma}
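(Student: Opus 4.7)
My plan is to prove $|L|<n/403$ by a defect counting argument using the bipartition from Lemma~\ref{lem-partition}, namely $V(G)=S\cup T$ with $e(S,T)>n^2/4-180n$ and $n/2-14\sqrt n<|S|,|T|<n/2+14\sqrt n$. The guiding intuition is that in the putative extremal graph $K_{\lceil n/2\rceil,\lfloor n/2\rfloor}^{+2}$ every vertex is adjacent to essentially the entire opposite part, so any $v$ with $d(v)$ much below $n/2$ creates a measurable shortfall in $e(S,T)$; and since that shortfall is already constrained by $|S|\,|T|-e(S,T)\leq 180n$ (using $|S|\,|T|\leq n^2/4$), only few such low-degree vertices can exist.

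To implement this I would fix $v\in L\cap S$ (the case $v\in L\cap T$ being symmetric). From $d_T(v)\leq d(v)\leq (1/2-1/200)n$ and $|T|>n/2-14\sqrt n$ we obtain
\[
|T|-d_T(v)\;\geq\;\tfrac{n}{200}-14\sqrt n,
\]
which is a positive quantity of linear order in $n$ precisely because of the hypothesis $n\geq 8.8\times 10^6$. Summing over $v\in L\cap S$ and using $|S|\,|T|\leq n^2/4$,
\[
|L\cap S|\cdot\bigl(\tfrac{n}{200}-14\sqrt n\bigr)\;\leq\;\sum_{v\in S}\bigl(|T|-d_T(v)\bigr)\;=\;|S|\,|T|-e(S,T)\;\leq\;180n,
\]
and a symmetric bound holds for $|L\cap T|$ by swapping the roles of $S$ and $T$ and using $d_S(v)\leq d(v)$ there. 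Adding the two inequalities yields the desired upper bound on $|L|$.

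The main obstacle is the tightness of this final arithmetic at the threshold $n=8.8\times 10^6$, since the coefficient $n/200-14\sqrt n$ is only modestly positive there and the crude estimate $|L|\leq 360n/(n/200-14\sqrt n)$ is delicate to compare against $n/403$. To close this gap I would refine the analysis in three ways: first, use the sharper balance estimate $\bigl||S|-n/2\bigr|<\sqrt{180n}\approx 13.42\sqrt n$, which follows directly from $|S|\,|T|\geq e(S,T)>n^2/4-180n$, thereby strengthening the denominator; second, sharpen the per-vertex defect to $|T|-d_T(v)\geq n/200-\sqrt{180n}+d_S(v)$ via $d_T(v)\leq d(v)-d_S(v)$, and aggregate the extra terms using $\sum_{v\in L\cap S}d_S(v)\leq 2e(S)<240n$; third, if needed, invoke the Perron eigenvector equation $\lambda x_v=\sum_{u\sim v}x_u$ together with $x_v\leq d(v)x_{u^*}/\lambda$, which converts the small-degree information on $L$ into a quantitative constraint through the spectral gap $\lambda^2\geq\lfloor n^2/4\rfloor+4$ supplied by Lemma~\ref{lem-F-G}. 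Combining these refinements with the hypothesis $n\geq 8.8\times 10^6$ should deliver $|L|<n/403$.
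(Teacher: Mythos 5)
Your cut-defect argument is logically valid as far as it goes, but it is quantitatively far too weak to deliver $|L|<n/403$ at the paper's threshold $n\ge 8.8\times 10^6$. Your crude bound reads $|L|\le 360n/(n/200-14\sqrt n)$; at $n=8.8\times 10^6$ the denominator is $44000-41531\approx 2469$, so the bound is roughly $1.3\times 10^6$, whereas $n/403\approx 2.2\times 10^4$. That is nearly two orders of magnitude off. Your first refinement barely moves the constant ($\sqrt{180}\approx 13.42$ in place of $14$) and still leaves you needing $n\gtrsim 4.7\times 10^7$ for the arithmetic to close. The second refinement does not help at all: the extra term $+d_S(v)$ appears on the \emph{lower-bound} side of the per-vertex defect and is nonnegative, so the only way to use the stated upper bound $\sum_{v\in L\cap S}d_S(v)<240n$ is to drop the term entirely, which recovers exactly the estimate you already had. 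The third refinement is not developed to the point where one can assess it.

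The paper's proof of Lemma~\ref{Lupper} uses a genuinely different and much sharper lever, which you should compare against your own. Instead of charging each low-degree vertex to a defect in the cut (where each $v\in L$ only buys $O(n/200)$ against a $180n$ budget), the paper removes a set $L'\subseteq L$ with $|L'|=n/403$ outright. Since each removed vertex has degree at most $(\tfrac12-\tfrac{1}{200})n$, one retains
\[
e(G\setminus L')\;\ge\;\frac{n^2}{4}-60n-\frac{n}{403}\Bigl(\frac12-\frac1{200}\Bigr)n\;>\;\frac14\Bigl(n-\frac{n}{403}\Bigr)^2+4\times 10^{-6}n^2
\]
for $n\ge 8.8\times 10^6$; the point is that deleting low-degree vertices shrinks the Mantel threshold $\lfloor (n')^2/4\rfloor$ faster than it removes edges, so the surviving graph sits a positive $\varepsilon n^2$ above that threshold. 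Corollary~\ref{cor-MM} (the Moon--Moser supersaturation) then yields $\Omega(n^3)$ triangles in $G\setminus L'$, contradicting $t(G)<10n^2$. This converts a small per-vertex degree deficit into an $\Omega(n^3)$ triangle count, which is a much more powerful contradiction than the $O(n)$ missing-cross-edges budget your approach works against. If you want to repair your proof, the key missing idea is precisely this ``remove $L'$ and re-apply Moon--Moser'' step.
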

\begin{proof}
 Suppose that $|L| \ge \frac{n}{403}$.
 Then let $L^{\prime}\subseteq L$ with $|L^{\prime}|= \frac{n}{403}$.
It follows that
   \begin{eqnarray*}
e(G\setminus L^{\prime}) &\ge& e(G)-\sum_{v\in L^{\prime}}d(v)\\
&\ge& \frac{n^2}{4} - 60n 
-  \frac{n}{403}\left(\frac{1}{2}-\frac{1}{200} \right)n\\
&>& \frac{1}{4} \left(n- \frac{n}{403} \right)^2 + 4 \times 10^{-6} n^2, 
     \end{eqnarray*}
where the last  inequality holds for $n\ge 8.8\times 10^{6}$. 
By Corollary~\ref{cor-MM}, the subgraph $G\setminus L^{\prime}$ contains more than 
$\frac{4}{3} \times 10^{-6}n^3$ triangles, which contradicts with the fact $t(G) < 10n^2$ for every $n\ge 7.5 \times 10^{6}$. 
So we have $|L| < \frac{n}{403}$, as needed. 
 \end{proof}

   In the proof of Lemma \ref{Lupper}, one can apply  
 Theorem \ref{thm-KMP} to obtain a better bound on $|L|$, for example, $|L| \le 150$. However,  Theorem \ref{thm-KMP}  requires $n\ge 1/ \delta$, where $\delta >0$ is an unspecified constant. To get an exact bound on $n$, rather than on $n$ being sufficiently large, we need to use Corollary~\ref{cor-MM}, as a slightly worse bound on $|L|$ is allowed for our purpose.

\begin{lemma}\label{W-Lenpty}
We have
$$|W| < 36000.$$
\end{lemma}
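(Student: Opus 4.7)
The plan is to use the max-cut structure of the partition $S\cup T$ together with the density bound $e(S,T)>n^{2}/4-180n$ from Lemma \ref{lem-partition} to show that any single $v\in W$ already centers far more than $\lfloor n/2\rfloor$ bowties, contradicting the standing assumption that $G$ has at most $\lfloor n/2\rfloor$ bowties. This approach will in fact yield the stronger conclusion $W=\varnothing$, from which $|W|<32400$ is immediate.

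The first step is to fix $v\in W$; by symmetry assume $v\in W\cap S$, so $d_{S}(v)\ge n/150$. Because $V(G)=S\cup T$ is a maximum cut, I get $d_{T}(v)\ge d_{S}(v)\ge n/150$. Next, since $|S|\,|T|\le\lfloor n^{2}/4\rfloor$ and $e(S,T)>n^{2}/4-180n$ by Lemma \ref{lem-partition}, the total number of non-edges of $G[S,T]$ is at most $180n$. Restricting this count to the bipartite subgraph $H:=G[N_{S}(v),N_{T}(v)]$ yields
\[
e(H)\ \ge\ |N_{S}(v)|\,|N_{T}(v)|-180n\ \ge\ \frac{n^{2}}{22500}-180n,
\]
which is of order $n^{2}$ for $n\ge 8.8\times 10^{6}$.

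In the second step, observe that every edge of $H$ lies in $G[N(v)]$, so every matching of size two in $H$ produces a distinct bowtie centered at $v$. Using the standard estimate that a graph $H$ has at least $\binom{e(H)}{2}-e(H)\,\Delta(H)$ matchings of size two, together with $\Delta(H)\le\max\{|N_{S}(v)|,|N_{T}(v)|\}\le n/2+14\sqrt{n}$ from Lemma \ref{lem-partition}, the number of bowties centered at $v$ is at least
\[
\frac{e(H)\bigl(e(H)-1-2\Delta(H)\bigr)}{2}\ =\ \Omega(n^{4}),
\]
which vastly exceeds $\lfloor n/2\rfloor$ for $n\ge 8.8\times 10^{6}$. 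This contradiction forces $W=\varnothing$.

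The main obstacle is simply keeping the constants explicit so that the contradiction is triggered precisely in the range $n\ge 8.8\times 10^{6}$; concretely, one must verify that $(n/150)^{2}-180n$ dominates the correction $n/2+14\sqrt{n}$ (and that the resulting lower bound on bowties exceeds $\lfloor n/2\rfloor$) for $n$ in this range, which is a routine algebraic check. If one prefers the weaker announced bound $|W|<32400$, the same framework specialized to a single bowtie per $v\in W$ combined with the double-counting identity $\sum_{v\in W} d_{\circ}(v)\le 2(e(S)+e(T))<240n$ from Lemma \ref{lem-partition} already gives $|W|<36000$, which can be tightened to $32400$ by refining the bound $t(G)<10n^{2}$ via the observation that at most $\lfloor n/2\rfloor$ vertices have matching number $\ge 2$ in their neighborhood (the remaining satisfy $e(G[N(v)])\le d(v)-1$ by Erd\H{o}s--Gallai).
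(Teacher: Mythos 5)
Your main argument is correct and takes a genuinely different route from the paper. The paper's proof is a one-line double count: since each $v\in W$ has degree at least $n/150$ inside its own part, $|W|\cdot n/300\le e(S)+e(T)$, and combining this with the bound on $e(S)+e(T)$ from Lemma~\ref{lem-partition} finishes. (Incidentally, the paper's text invokes $e(S)+e(T)<108n$ at this point, whereas Lemma~\ref{lem-partition} only supplies $<120n$; the paper's own computation therefore really yields $|W|<36000$, which matches the $36000$ your fallback sketch arrives at.) You instead show that any single $v\in W$ centers $\Omega(n^4)$ bowties by passing to the bipartite graph $H=G[N_S(v),N_T(v)]$: the max-cut property gives $|N_S(v)|,|N_T(v)|\ge n/150$, the bound $e(S,T)>n^2/4-180n$ makes $H$ miss at most $180n$ edges, and the estimate $\binom{e(H)}{2}-e(H)\Delta(H)$ for $2$-matchings in $H$ is correct; each such matching lies inside $N(v)$ and yields a distinct bowtie centered at $v$. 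The constants check out for $n\ge 8.8\times 10^6$. What your approach buys is the stronger conclusion $W=\varnothing$ directly, without reference to $L$; the paper only reaches $W=\varnothing$ later, via Lemmas~\ref{lem-W-sub-L} and~\ref{Lempty}, where it uses a common-neighborhood bowtie count very much in the spirit of yours. What the paper's route buys is that Lemma~\ref{W-Lenpty} stays a pure edge-density consequence, not yet invoking the max-cut property or the bowtie hypothesis. Your closing sketch about recovering the literal constant $32400$ is vague and in any case superfluous, since $W=\varnothing$ subsumes it.
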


\begin{proof}
We denote $W_1=W\cap S$ and $W_2=W\cap T$. Then 
 $$2e(S) =\sum_{u\in S}d_{S}(u) \ge  \sum_{u\in W_1}d_S(u)\ge  \frac{n}{150} |W_1| $$ 
 and 
 \[  2e(T) = \sum_{u\in T}d_{T}(u)\ge  \sum_{u\in W_2}d_T(u)
 \ge \frac{n}{150} |W_2| . \]
  So we obtain 
  \begin{equation*} \label{WL-2}
  e(S)+e(T)\ge (|W_1|+|W_2|)\frac{ n}{300} 
  =\frac{|W|  n}{300}.
  \end{equation*}
 On the other hand, 
 Lemma \ref{lem-partition} implies that 
 \[  e(S)+e(T)< 120n. \] 
  Then we get $\frac{|W| n}{300}< 120n$, 
that is,
$ |W|< 36000$, as desired. 
  \end{proof}

  We also need the following  inclusion-exclusion principle.

\begin{lemma}\label{set}
Let $A_1, A_2,\dots, A_k$ be $k$  finite sets. Then
\begin{equation*}
\left| \bigcap_{i=1}^k A_i \right| \ge \sum_{i=1}^k |A_i|-(k-1)\left|\bigcup_{i=1}^k A_i\right|.
\end{equation*}
\end{lemma}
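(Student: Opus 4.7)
The plan is to prove this Bonferroni-type inequality by a direct double-counting argument, indexing elements of the union by the number of sets that contain them. Set $U := \bigcup_{i=1}^{k} A_{i}$ and, for each $x \in U$, let $f(x)$ denote the number of indices $i \in \{1,\dots,k\}$ with $x \in A_{i}$. Then $1 \le f(x) \le k$, and counting incidences in two ways gives $\sum_{i=1}^{k}|A_{i}| = \sum_{x \in U} f(x)$. Moreover, $x \in \bigcap_{i=1}^{k} A_{i}$ if and only if $f(x)=k$, so $\bigl|\bigcap_{i=1}^{k} A_{i}\bigr| = |\{x \in U : f(x)=k\}|$.

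With this setup, the inequality to be proved becomes
\[
\sum_{x \in U} \bigl(f(x) - (k-1)\bigr) \le \bigl|\{x \in U : f(x)=k\}\bigr|.
\]
This is almost immediate: when $f(x)=k$ the summand equals $1$, and when $f(x) \le k-1$ the summand is at most $0$. Hence the left-hand side is bounded above by $|\{x \in U : f(x)=k\}|$, which gives exactly the desired bound after rearranging.

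As an alternative, one could also argue by induction on $k$. The base case $k=2$ is the standard identity $|A_{1}\cap A_{2}| = |A_{1}|+|A_{2}|-|A_{1}\cup A_{2}|$. For the inductive step, setting $B := A_{1}\cap\cdots\cap A_{k-1}$, one applies the $k=2$ identity to $B$ and $A_{k}$, uses $|B\cup A_{k}|\le\bigl|\bigcup_{i=1}^{k}A_{i}\bigr|$, and then invokes the induction hypothesis on $B$ together with $\bigl|\bigcup_{i=1}^{k-1}A_{i}\bigr|\le\bigl|\bigcup_{i=1}^{k}A_{i}\bigr|$. The two bounds combine cleanly to yield the claimed inequality.

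There is no real obstacle here: the statement is a purely set-theoretic identity-style bound, and both the counting proof and the inductive proof are short. The only thing to be careful about is the direction of the inequality $|B \cup A_{k}| \le \bigl|\bigcup_{i=1}^{k} A_{i}\bigr|$ in the inductive approach, which is needed to ensure the final sign is correct. I would present the first (counting) proof since it is the most transparent and produces the inequality in a single line after the reduction.
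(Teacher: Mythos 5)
Your proof is correct. The paper states Lemma~\ref{set} without proof, treating it as a well-known Bonferroni-type consequence of inclusion–exclusion, so there is no paper argument to compare against. Your counting proof is clean and complete: the reduction to $\sum_{x\in U}\bigl(f(x)-(k-1)\bigr)\le |\{x\in U : f(x)=k\}|$ is exactly right, and the pointwise bound (summand equals $1$ when $f(x)=k$ and is nonpositive otherwise) immediately gives the claim. The inductive alternative you sketch is also standard and correct, with the only subtlety being the direction of $|B\cup A_k|\le\bigl|\bigcup_{i=1}^k A_i\bigr|$, which you correctly flag. Either version would serve as a valid proof of the lemma the paper omits.
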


  \begin{lemma} \label{lem-W-sub-L}
  We have $W \subseteq L$. 
  \end{lemma}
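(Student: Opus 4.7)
The plan is to argue by contradiction: assume some vertex $v$ lies in $W \setminus L$, and derive that $G$ contains more than $\lfloor n/2 \rfloor$ bowties, contradicting the standing hypothesis. By symmetry we may take $v \in S$, so that $d_S(v) \geq n/150$ while $d(v) > \bigl(\tfrac12 - \tfrac{1}{200}\bigr) n$.

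First I would invoke the max-cut property of the partition $V(G) = S \cup T$: moving $v$ from $S$ to $T$ cannot increase $e(S,T)$, hence $d_T(v) \geq d_S(v) \geq n/150$. Combined with $d_S(v) \leq d(v)/2$, this forces $d_T(v) \geq d(v)/2 > \bigl(\tfrac14 - \tfrac{1}{400}\bigr)n$, so both $X := N_S(v)$ and $Y := N_T(v)$ have linear size. Next I would bound $e(X,Y)$ from below. Since $|S| + |T| = n$ gives $|S||T| \leq n^2/4$, Lemma~\ref{lem-partition} shows that the total number of non-edges of $G$ across the cut is at most $|S||T| - e(S,T) < 180n$. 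In particular at most $180n$ of the $|X||Y|$ cross pairs sitting inside $N(v)$ can fail to be edges, so
\[
e(X,Y) \;\geq\; |X|\,|Y| - 180n \;\geq\; \frac{n}{150}\cdot \frac{99n}{400} - 180n,
\]
which is of order $n^2$ once $n \geq 8.8\times 10^{6}$.

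Then I would count matchings of size two inside $G[X,Y]$. For any edge $xy$ of $G[X,Y]$, the number of edges of $G[X,Y]$ incident to $\{x,y\}$ is at most $|X|+|Y|-1 < n$, hence the number of unordered pairs of vertex-disjoint edges in $G[X,Y]$ is at least
\[
\tfrac12\, e(X,Y)\bigl(e(X,Y) - |X| - |Y|\bigr),
\]
which is of order $n^4$ for our range of $n$. Finally, each such disjoint pair of edges in $N(v)$, together with $v$, completes to a bowtie centered at $v$; distinct pairs yield distinct bowties, since the unordered pair of non-central edges is recoverable from the bowtie. This produces far more than $\lfloor n/2\rfloor$ bowties in $G$, contradicting the assumption on $G$, and so $W \subseteq L$.

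The main obstacle is ensuring $d_T(v)$ is linear in $n$, since the assumption $v \notin L$ on its own does not control how $d(v)$ splits between $S$ and $T$; this is exactly where the max-cut property enters. Once both $|X|$ and $|Y|$ are shown to be of order $n$, the near-completeness of $G[S,T]$ from Lemma~\ref{lem-partition} immediately forces a quadratic number of cross edges inside $N(v)$, and elementary double counting inflates this to a quartic number of bowties at $v$, which comfortably exceeds the linear threshold $\lfloor n/2 \rfloor$.
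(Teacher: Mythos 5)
Your proof is correct, but it takes a genuinely different route from the paper. The paper argues via the contrapositive: assuming $u \notin L$, it shows $u$ can have at most one neighbor in $S\setminus(W\cup L)$ (otherwise the two neighbors' common $T$-neighborhoods with $u$ generate $\Omega(n^2)$ bowties), and then concludes $d_S(u)\le 1+|W|+|L| < n/150$ using the previously established bounds $|W|<32400$ and $|L|<n/403$. You instead argue directly by contradiction from $v\in W\setminus L$: the membership in $W$ gives $|N_S(v)|\ge n/150$, the membership outside $L$ plus max-cut gives $|N_T(v)|>(\tfrac14-\tfrac{1}{400})n$, and the fact that $G[S,T]$ misses fewer than $180n$ edges forces $e(N_S(v),N_T(v))=\Omega(n^2)$; pairs of disjoint edges inside $N(v)$ across the cut then produce $\Omega(n^4)$ bowties centered at $v$. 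Your argument is more direct and obtains a far stronger (quartic vs.\ quadratic) bowtie count, and notably it does not invoke the size bounds on $|W|$ and $|L|$ at all — only the near-completeness of $G[S,T]$ from Lemma~\ref{lem-partition} and the max-cut property. The one place to be slightly more careful is the step $d_T(v)\ge d_S(v)$: this comes from max-cut and combines with $v\notin L$ to give the linear lower bound on $d_T(v)$; you state it correctly but it is worth flagging that $d_S(v)\ge n/150$ by itself would not suffice for the argument without the companion bound on $d_T(v)$.
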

  
  \begin{proof}
We will show that if $u\in V(G)$ and $u\notin L$, then $u\notin W$. 
For notational convenience, we denote $L_1=L\cap S$ and $L_2=L\cap T$. Without loss of generality, 
we may assume that $u\in S$ and 
   $u\notin L_1.$ Since  $S$ and $T$ form a maximum cut, 
  we can get  $d_T(u)\ge \frac{1}{2}d(u)$. Since $u\not\in L_1$, we have $d(u)> \left(\frac{1}{2}-\frac{1}{200} \right)n$. So
   $$d_T(u)\ge \frac{1}{2}d(u) > 
   \left(\frac{1}{4}-\frac{1}{400} \right)n.$$
       Recall that 
  $|L|\le \frac{n}{403}$, $|W| \le 36000$ and 
  $|S|> \frac{n}{2} - 14\sqrt{n} $,   
we have 
$  |S\setminus (W\cup L)| > \frac{n}{3}$. 

  We claim that $u$ has at most one neighbor in 
  $S\setminus (W\cup L)$. 
  Indeed, suppose on the contrary that 
   $u$ is adjacent to two vertices  $u_1, u_2$ in $S\setminus (W\cup L)$.  
   For $i\in \{1,2\}$, since $u_i\not\in L$,  we have $d(u_i)> \left(\frac{1}{2}-\frac{1}{200}\right )n$. At the same time, since $u_i\notin W$, 
   we have $d_S(u_i)< \frac{n}{150}$. So
  \[  d_T(u_i)=d(u_i)-d_S(u_i)> \left(\frac{1}{2}-\frac{1}{200} - \frac{1}{150} \right)n. \] 
   By Lemma~\ref{set}, for each $i \in \{1,2\}$, 
   we have
   \begin{eqnarray*}
 |N_T(u)\cap N_T(u_i)| 
 &\ge&  |N_T(u) | + |N_T(u_i) |  -|T| \\
& \ge &\left (\frac{1}{4}-\frac{1}{400}\right)n+\left(\frac{1}{2}-\frac{1}{200} -\frac{1}{150} \right)n 
- \left( \frac{n}{2} + 14 \sqrt{n} \right)\\
 &>&\frac{n}{5}, 
\end{eqnarray*}
 where the last inequality holds for $n\ge 1.6\times 10^5$.   
Let $B_i$ be the common neighbors of $u$ 
and $u_i$ in $T$. 
Then $|B_i|>n/5$ for each $i\in \{1,2\}$. 
Observe that for any two vertices $v_1\in B_1$ and 
 $v_2\in B_2$ with $v_1\neq v_2$, 
the triangles $uu_1v_1$ and $uu_2v_2$ 
form a copy of the bowtie. 
In other words, we can find at least 
$\frac{1}{2}\frac{n}{5}(\frac{n}{5} -1) > \frac{n}{2}$ copies of the bowtie, a contradiction. 
Therefore $u$ is adjacent to at most one vertex in  $S\setminus (W\cup L)$.

Using the above claim, for $n\ge 8.7\times 10^{6}$, we have 
\[  d_S(u) \le 1+ |W| +|L| 
<  1+ 36000 + \frac{n}{403} < \frac{n}{150}. \] 
By definition, we get $u\notin W$.
This completes the proof. 
 \end{proof}

\begin{lemma}  \label{lem-one-edge}
Both $G[S\setminus L]$ and $G[T\setminus L] $ 
 contain neither three pairwise disjoint edges nor two intersecting edges.   
Furthermore, 
we have $e(S\setminus L) \le 2$ and 
$e(T\setminus L) \le 2$. So 
there exist independent sets
 $I_S\subseteq S\setminus L$  and $I_T\subseteq T\setminus L$ such that
$ |I_S|>  |S|- \frac{n}{402} $ and $|I_T|> |T|- \frac{n}{402} $. 
 \end{lemma}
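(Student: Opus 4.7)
The plan is to leverage a key pointwise degree bound that follows from the preceding lemmas. For any $u \in S\setminus L$, Lemma \ref{lem-W-sub-L} forces $u \notin W$, so $d_S(u) < n/150$; together with $u \notin L$ this gives
\[ d_T(u) \;>\; \left(\frac{1}{2} - \frac{1}{200} - \frac{1}{150}\right)n \;=\; \frac{293\,n}{600}, \]
and the symmetric bound holds for $u \in T\setminus L$. The engine of the argument is: any cherry or matching inside $G[S\setminus L]$ forces large common $T$-neighborhoods, which when combined with the given edges in $S$ must produce far more than $\lfloor n/2\rfloor$ bowties, contradicting the standing assumption on $G$. I will rule out the two forbidden configurations separately.

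\textbf{Ruling out $K_{1,2}$.} Suppose $u_1 \in S\setminus L$ has two distinct neighbors $u_2, u_3 \in S\setminus L$. Put $A := N_T(u_1) \cap N_T(u_2)$ and $B := N_T(u_1) \cap N_T(u_3)$. Lemma \ref{set} with $k=2$, combined with $|T| < n/2 + 14\sqrt{n}$ from Lemma \ref{lem-partition}, gives $|A|, |B| \geq 286\,n/600 - 14\sqrt{n}$. For each ordered pair $(w_1, w_2) \in A \times B$ with $w_1 \ne w_2$, the triangles $\{u_1, u_2, w_1\}$ and $\{u_1, u_3, w_2\}$ share only $u_1$ and so form a bowtie; since $u_2 \ne u_3$ distinguishes the two wings, distinct ordered pairs yield bowtie subgraphs with distinct edge sets, hence distinct subgraphs. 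This produces at least $|A||B| - |A\cap B| = \Omega(n^2)$ bowties, a contradiction. Therefore $G[S\setminus L]$ has maximum degree at most one, and is a matching.

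\textbf{Ruling out $3K_2$.} Suppose $G[S\setminus L]$ contains three pairwise disjoint edges $e_1, e_2, e_3$. For each pair $\{e_i, e_j\}$ with endpoint set $\{u, v, u', v'\}$, Lemma \ref{set} with $k = 4$ yields
\[ \bigl|N_T(u) \cap N_T(v) \cap N_T(u') \cap N_T(v')\bigr| \;\geq\; 4 \cdot \frac{293\,n}{600} - 3\left(\frac{n}{2} + 14\sqrt{n}\right) \;\geq\; \frac{272\,n}{600} - 42\sqrt{n}. \]
Every vertex $w$ of this intersection yields a bowtie centered at $w$ with wings $e_i, e_j$, and bowties arising from different pairs $\{e_i, e_j\}$ have disjoint petal sets, hence are distinct. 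Summing over the three pairs gives
\[ 3\left(\frac{272\,n}{600} - 42\sqrt{n}\right) \;=\; \frac{272\,n}{200} - 126\sqrt{n} \;>\; \frac{n}{2} \qquad \text{for } n \geq 8.8 \times 10^6, \]
again a contradiction.

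Combining the two steps, $G[S\setminus L]$ is a matching with at most two edges, so $e(S\setminus L) \leq 2$ and at most four vertices of $S\setminus L$ meet an edge. Removing these endpoints produces an independent set $I_S \subseteq S\setminus L$ with
\[ |I_S| \;\geq\; |S| - |L| - 4 \;>\; |S| - \frac{n}{403} - 4 \;>\; |S| - \frac{n}{402}, \]
where the last inequality uses $n/402 - n/403 = n/(402 \cdot 403) > 4$, valid since $n \geq 8.8 \times 10^6 > 4 \cdot 402 \cdot 403$. The identical argument with $T$ in place of $S$ gives $I_T$. I anticipate the main obstacle to be keeping the constants tight in the $3K_2$ step: a single pair of disjoint edges only generates roughly $0.45\,n$ bowties, which falls short of the $\lfloor n/2 \rfloor$ threshold on its own, so it is essential that all three pairs of edges in the $3K_2$ configuration contribute simultaneously.
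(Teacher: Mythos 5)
Your proof is correct, and the overall plan mirrors the paper's: derive the pointwise bound $d_T(u) > \bigl(\tfrac{1}{2}-\tfrac{1}{200}-\tfrac{1}{150}\bigr)n$ for $u \in S\setminus L$ from $W \subseteq L$, then use Lemma~\ref{set} to force many common $T$-neighbors for the endpoints of any cherry or large matching, producing too many bowties. The one genuine divergence is in the $3K_2$ step: the paper intersects all six $T$-neighborhoods at once, shows the common intersection has size $> n/6$, and multiplies by the $\binom{3}{2}=3$ choices of wing-pair per common center; you instead apply the inclusion--exclusion bound three separate times, once to each pair of disjoint edges, and add the three $4$-way intersection counts (correctly observing that bowties from different wing-pairs are automatically distinct because their petal edge sets differ). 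Your route wastes less in the inclusion--exclusion bound per application (the $6$-way intersection costs a factor $5|T|$ where each $4$-way costs only $3|T|$), and you were right to flag that a single pair falls short of the $\lfloor n/2\rfloor$ threshold so all three pairs must be counted. A minor further difference: you delete all $\le 4$ matched endpoints to form $I_S$, giving $|I_S| \ge |S| - |L| - 4$, whereas the paper deletes one endpoint per edge and gets $|I_S| \ge |S|-|L|-2$; your weaker bound still clears $|S| - \tfrac{n}{402}$ once $n > 4\cdot 402\cdot 403$, which you verified, so this is harmless.
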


\begin{proof}
We proceed the proof in two cases. 

{\bf Case 1.}  
If $e_1,e_2$ and $e_3$ are pairwise disjoint edges in $G[S\setminus L]$, 
then we can denote 
$e_1=\{u_1,u_2\}$, $e_2=\{u_3,u_4\}$ 
and $e_3=\{u_5,u_6\}$. 
For $1\le i\le 6 $,   
since $u_i \notin L$, we get  
$d(u_i)> \left(\frac{1}{2}-\frac{1}{200} \right)n$.
By Lemma~\ref{lem-W-sub-L}, 
we have $u_i \notin W$ and 
$d_S(u_i)<  \frac{n}{150}$. Hence
$d_T(u_i)=d(u_i)-d_S(u_i)> \left(\frac{1}{2}-\frac{1}{200}-\frac{1}{150}\right)n.$
By Lemma~\ref{set}, we  get 
    \begin{eqnarray*}
\left |\bigcap_{i=1}^{6}N_T(u_i) \right| &\ge &  \sum_{i=1}^{6}|N_T(u_i)|-5 \left|\bigcup_{i=1}^{6}N_T(u_i) \right| \\
& \ge& \left (\frac{1}{2}-\frac{1}{200}-\frac{1}{150} \right)n\cdot 6 - 
5 \left(\frac{n}{2}+ 14 \sqrt{n} \right) \\
 &>& \frac{n}{6}, 
\end{eqnarray*}
where the last inequality follows by $n\ge 7.1\times 10^{4}$. 
Observe that 
each vertex of the common neighbors of 
$\{u_1,u_2,\ldots ,u_6\}$ leads to three copies of the bowtie. Consequently,  there are 
 more than $\frac{n}{2}$ copies of the bowtie in $G$, 
 which is a contradiction. 

{\bf Case 2.}  
Assume that $e_1$ and $e_2$ are intersecting edges in $G[S\setminus L]$,  
and $e_1,e_2$ form a copy of $K_{1,2}$. 
We denote 
$e_1=\{u_1,u_2\}$ and $e_2=\{u_1,u_3\}$. Similarly, we can see that 
for each $i \in \{2,3\}$, 
   we have
   \begin{eqnarray*}
 |N_T(u_1)\cap N_T(u_i)| 
 &\ge&  |N_T(u_1) | + |N_T(u_i)|  -|T| \\
& \ge & 
2 \left(\frac{1}{2}-\frac{1}{200} -\frac{1}{150} \right)n 
- \left( \frac{n}{2} +14\sqrt{n}\right)\\
 &>&\frac{n}{3}, 
\end{eqnarray*}
where the last inequality holds for $n\ge 9541$. 
In  other words, there are more than 
 $n/3$ choices of a vertex $v_2\in T$ 
such that $u_1u_2 v_2$ forms a triangle. 
Similarly, we have at least $n/3$ choices of a vertex 
$v_3\in T$ such that $v_3\neq v_2$ and 
$u_1u_3v_3$ forms a triangle. 
Thus, we can find $\frac{1}{2}\frac{n}{3} (\frac{n}{3}-1) $ copies of the bowtie in $G$, a contradiction. 
 
 To sum up, we have proved that 
both $G[S\setminus L]$ and $G[T\setminus L]$ 
have at most two non-trivial components, and each of which consists of a single edge. 
Consequently, we get $e(S\setminus L)\le 2$ and $e(T\setminus L)\le 2$. 
Now, by deleting  
 one vertex of each edge in $G[S\setminus L]$, we can obtain 
 a large independent set. 
 So there exists an independent set
 $I_S\subseteq S\setminus L$ such that
  $
  |I_S|\ge |S\setminus L| - 2 
  > |S| - \frac{n}{402}$ by Lemma \ref{Lupper}. 
 The same argument gives that there  is an independent set $I_T\subseteq T\setminus L$ with
$ |I_T|> |T|- \frac{n}{402}$.  
 \end{proof}

In the sequel, let $\mathbf{x}\in \mathbb{R}^n$ 
be a non-negative eigenvector corresponding to $\lambda (G)$. 
We write ${x}_v$ for the entry of $\mathbf{x}$ 
corresponding to a vertex $v$. 
By scaling, we may assume further  that the vector $\mathbf{x}$  has 
the maximum entry equal to $1$. 
Let $z$ be a vertex of $G$ with the maximum eigenvector entry and ${x}_z=1$. Then 
 $$ 
 \frac{n}{2} < 
 \lambda (G) = \lambda (G) {x}_z = 
 \sum_{w\in N( z)}  {x}_w \le d(z).$$
 By definition, we have $z\notin L$. 
We may assume that $z\in S\setminus L$.

\begin{lemma} \label{lem-IT}
We have $ \sum\limits_{ v\in I_T}  {x}_v >  \frac{n}{2} - \frac{n}{201}$.
\end{lemma}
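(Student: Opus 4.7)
The plan is to invoke the eigen-equation at the maximum-entry vertex $z$ and account for every neighbor whose contribution is not already captured by $\sum_{v \in I_T} x_v$. Since $x_z = 1$, we have $\lambda(G) = \lambda(G) x_z = \sum_{w \in N(z)} x_w$; partitioning $N(z)$ into $N_S(z)$, $N_T(z) \cap I_T$, and $N_T(z) \setminus I_T$ and using $x_w \le 1$ for every vertex gives
\begin{equation*}
\sum_{v \in I_T} x_v \;\ge\; \sum_{v \in N_T(z) \cap I_T} x_v \;\ge\; \lambda(G) - d_S(z) - |T \setminus I_T|.
\end{equation*}
Since Lemma \ref{lem-F-G} yields $\lambda(G) > \sqrt{\lfloor n^2/4 \rfloor + 4} > n/2$, the proof reduces to showing $d_S(z) + |T \setminus I_T| \le n/201$.

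The bound $|T \setminus I_T| < n/402$ is immediate from Lemma \ref{lem-one-edge}. For $d_S(z)$, the crude estimate $d_S(z) < n/150$ (obtained from $z \in S \setminus L$ together with $W \subseteq L$ from Lemma \ref{lem-W-sub-L}) would lose a factor of roughly two, so I would recycle the bowtie-counting step inside the proof of Lemma \ref{lem-W-sub-L}. Explicitly, if $z$ had two neighbors $u_1, u_2$ in $S \setminus (W \cup L) = S \setminus L$, then each $u_i$ would share more than $n/5$ common $T$-neighbors with $z$ by the inclusion–exclusion argument from Lemma \ref{set}, which in turn would produce far more than $\lfloor n/2 \rfloor$ bowties. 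Consequently $z$ has at most one neighbor in $S \setminus L$, and combining with $|L \cap S| \le |L| < n/403$ from Lemma \ref{Lupper} gives $d_S(z) \le 1 + n/403$.

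For $n \ge 8.8 \times 10^6$ this comfortably implies $1 + n/403 < n/402$, whence $d_S(z) + |T \setminus I_T| < 2 \cdot n/402 = n/201$, and the claimed inequality
\begin{equation*}
\sum_{v \in I_T} x_v \;>\; \lambda(G) - \frac{n}{201} \;>\; \frac{n}{2} - \frac{n}{201}
\end{equation*}
follows at once. The only genuine obstacle is threading these constants sharply, and this is exactly why one cannot afford the off-the-shelf bound $d_S(z) < n/150$ from the definition of $W$ and must instead appeal to the finer one-neighbor property extracted from the proof of Lemma \ref{lem-W-sub-L}.
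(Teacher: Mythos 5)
Your proof is correct and follows essentially the same route as the paper: apply the eigen-equation at the maximum-entry vertex $z$, split $N(z)$ into its $S$-part, its $I_T$-part and its $(T\setminus I_T)$-part, bound the last two pieces by $\sum_{v\in I_T} x_v$ and $|T\setminus I_T|<n/402$, and finish with $\lambda(G)>n/2$. The one place where you do extra work is the bound on $d_S(z)$: you correctly notice that the ``off-the-shelf'' estimate $d_S(z)<n/150$ from $z\notin W$ is too weak, and you propose to re-run the bowtie-counting argument from Lemma~\ref{lem-W-sub-L} to show $z$ has at most one neighbor in $S\setminus L$. That re-derivation is valid, but it is unnecessary: Lemma~\ref{lem-one-edge} already states that $G[S\setminus L]$ is $K_{1,2}$-free, so every vertex of $S\setminus L$ (in particular $z$) has at most one neighbor there, and $d_S(z)\le d_{S\setminus L}(z)+|L|\le 1+|L|<n/402$ follows immediately. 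The paper reads this off directly; your version reaches the same constant but duplicates an argument that has already been packaged into the preceding lemma.
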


\begin{proof} 
  By Lemma \ref{lem-W-sub-L}, we know that $W\subseteq L$ and 
  $|L|< \frac{n}{403}$. 
From Lemma~\ref{lem-one-edge}, we have 
$d_{S\setminus L}(z) \le 2$ and 
 $  d_S(z)\le d_{S\setminus L}(z) + |L| < \frac{n}{402}$.  
 Therefore, we get
 \begin{eqnarray*}
 \lambda (G)&=& \lambda (G) {x}_z  = 
 \sum_{v\in N_S(z)} {x}_v 
 +\sum_{v\in N_T(z)} {x}_v\\
 &=& \sum_{v\in N_S(z)} {x}_v+  \sum_{v\sim z, v\in I_T} {x}_v+\sum_{v\sim z , v\in T\setminus I_T} {x}_v\\  
 &<&  \frac{n}{402} +\sum_{ v\in I_T} {x}_v+ |T \setminus I_T|\\ 
&<&  \sum_{ v\in I_T} {x}_v  + \frac{n}{201}, 
\end{eqnarray*}
where $I_T$ is the independent set from Lemma \ref{lem-one-edge}. Recall that $\lambda (G) \ge 
\lambda (K_{\lceil \frac{n}{2} \rceil, \lfloor \frac{n}{2} \rfloor}^{+2})  > \frac{n}{2}$. 
 It follows that 
$ \sum_{ v\in I_T} {x}_v> \frac{n}{2} - \frac{n}{201}$, as required.
\end{proof}

\begin{lemma}\label{Lempty}
 We have $L = \varnothing$. 
\end{lemma}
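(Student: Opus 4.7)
The plan is to argue by contradiction via a single vertex-swap that strictly increases the spectral radius while not increasing the bowtie count. Suppose some $v\in L$. I will exhibit a graph $G^*$ on $n$ vertices with at most $\lfloor n/2\rfloor$ bowties but with $\lambda(G^*)>\lambda(G)$, contradicting the choice of $G$ as the spectral maximizer among such graphs.

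The swap is the natural one: delete $v$ and introduce a new vertex $v^*$ whose neighborhood is exactly the independent set $I_T\subseteq T\setminus L$ provided by Lemma \ref{lem-one-edge}. Since $I_T$ is independent, $v^*$ lies in no triangle of $G^*$, so no bowtie of $G^*$ can contain $v^*$. Consequently every bowtie of $G^*$ is already a bowtie of $G-v$, giving $B(G^*)\le B(G)\le \lfloor n/2\rfloor$. Also $|V(G^*)|=n$, so $G^*$ is a legitimate candidate provided its spectral radius is large enough.

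To compare the spectral radii I apply the Rayleigh quotient to a carefully chosen test vector. Set $s:=\sum_{u\in I_T}x_u$ and define $\mathbf{y}\in\mathbb{R}^{V(G^*)}$ by $y_u=x_u$ for $u\ne v^*$ and $y_{v^*}:=s/\lambda(G)$. Expanding $\mathbf{y}^{\!\top}A(G^*)\mathbf{y}$ and using the eigen-identity $\sum_{u\in N_G(v)}x_u=\lambda(G)\,x_v$ to collapse the contribution of the deleted edges, essentially every term cancels and one is left with
\[
\mathbf{y}^{\!\top}A(G^*)\mathbf{y}-\lambda(G)\,\|\mathbf{y}\|^2 \;=\; \frac{s^2}{\lambda(G)}-\lambda(G)\,x_v^2,
\]
which is strictly positive precisely when $s>\lambda(G)\,x_v$.

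The required inequality $s>\lambda(G)\,x_v$ follows cleanly from estimates already in hand. Lemma \ref{lem-IT} supplies $s>\tfrac{n}{2}-\tfrac{n}{201}$, while the defining property of $L$ combined with $x_u\le 1$ forces
\[
\lambda(G)\,x_v=\sum_{u\in N(v)}x_u\le d(v)\le \Bigl(\tfrac12-\tfrac{1}{200}\Bigr)n.
\]
Subtracting gives a slack of at least $\tfrac{n}{200\cdot 201}>0$, so $s>\lambda(G)\,x_v$ and hence $\lambda(G^*)>\lambda(G)\ge \lambda(K^{+2}_{\lceil n/2\rceil,\lfloor n/2\rfloor})$. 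Together with $B(G^*)\le\lfloor n/2\rfloor$, this contradicts the maximality of $\lambda(G)$, forcing $L=\varnothing$. Note that no case distinction on whether $v\in S$ or $v\in T$ is needed, since $I_T$ is independent and avoids $L$ regardless. The main obstacle is merely that the slack between the constants $\tfrac{1}{200}$ and $\tfrac{1}{201}$ is narrow, but these two quantities are calibrated to each other precisely so that this replacement argument succeeds without any additional structural input.
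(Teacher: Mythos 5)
Your proof is correct and follows essentially the same strategy as the paper's: replace $v$'s incident edges with edges to the independent set $I_T$, note that this introduces no new triangles (hence no new bowties) since $I_T$ is independent, and then compare spectral radii via a Rayleigh quotient to contradict the maximality of $\lambda(G)$. The only deviation is cosmetic: you pick the test vector with $y_{v^*}=s/\lambda(G)$ rather than keeping the original entry $x_v$; since $\lambda(G)\,x_v=\sum_{u\in N(v)}x_u$, the resulting condition $s>\lambda(G)\,x_v$ is identical to the paper's $\sum_{w\in I_T}x_w>\sum_{u\in N(v)}x_u$, and both are established from Lemma \ref{lem-IT} together with $d(v)\le(\tfrac12-\tfrac1{200})n$. (Your modified vector does handle the degenerate possibility $x_v=0$ a touch more cleanly, but this is a minor technical nicety rather than a different argument.)
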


\begin{proof}
Suppose on the contrary that there exists a vertex  $v\in L$. Then $d(v)\le (\frac{1}{2}-\frac{1}{200})n$.
We define a graph $G^+$ with vertex set $V(G)$ and edge set 
\[ E(G^+) := E(G \setminus \{v\}) \cup \{vw: w\in I_T\}. \]  
 Note that adding a vertex incident with all vertices in $I_T$ does not create any triangles 
since $I_T$ is an independent set. 
By Lemma \ref{lem-IT},  we have 
\begin{eqnarray*}
\lambda (G^+) - \lambda (G) &\geq &
 \frac{\mathbf{x}^{\top}\left(A(G^+) - A(G)\right) \mathbf{x}}{\mathbf{x}^{\top} \mathbf{x}} \\ 
 & =& \frac{2 {x}_v}{\mathbf{x}^{\top}\mathbf{x}}\left( \sum_{w\in I_T} {x}_w - \sum_{u\in N_G(v)} {x}_u\right) \\
& \geq & \frac{2 {x}_v}{\mathbf{x}^{\top}\mathbf{x}} 
\left( \frac{n}{2} - \frac{n}{201} - \Bigl(\frac{1}{2}-\frac{1}{200} \Bigr)n \right)\\
&=& \frac{2 {x}_v}{\mathbf{x}^{\top}\mathbf{x}} \left( \frac{n}{200} - \frac{n}{201}\right)> 0,
\end{eqnarray*}
which contradicts 
with the maximality of $\lambda(G)$. 
So $L$ must be empty. 
\end{proof}

By Lemmas \ref{lem-one-edge} and \ref{Lempty},  
we  get $e(S) \le 2$ and $e(T) \le 2$. 

\begin{lemma} \label{STle3}
We have $e(S) + e(T) \le 3$. 
\end{lemma}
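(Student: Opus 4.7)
The plan is to rule out the only remaining scenario. Since Lemmas \ref{lem-one-edge} and \ref{Lempty} already give $e(S), e(T) \le 2$, any violation of the lemma must satisfy $e(S) = e(T) = 2$. Moreover, the $K_{1,2}$-free and $3K_2$-free conditions in Lemma \ref{lem-one-edge} then force each of $G[S]$ and $G[T]$ to be a matching of size exactly two, say $\{u_1u_2, u_3u_4\} \subset S$ and $\{v_1v_2, v_3v_4\} \subset T$. I will contradict the standing hypothesis that $G$ has at most $\lfloor n/2 \rfloor$ bowties by exhibiting strictly more than that many in this configuration.

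The engine will be the consequence of $L = \varnothing$ (Lemma \ref{Lempty}) and $W \subseteq L$ (Lemma \ref{lem-W-sub-L}) that every $u_i$ has $d(u_i) > (\tfrac{1}{2} - \tfrac{1}{200})n$ and $d_S(u_i) < \tfrac{n}{150}$, hence $d_T(u_i) > (\tfrac{1}{2} - \tfrac{7}{600})n$. Combining the four resulting degree estimates through Lemma \ref{set}, and using $|T| < \tfrac{n}{2} + 14\sqrt{n}$ from Lemma \ref{lem-partition}, I would obtain
\[
\left| \bigcap_{i=1}^{4} N_T(u_i) \right| \;\ge\; 4 \left( \tfrac{1}{2} - \tfrac{7}{600} \right) n \;-\; 3 \left( \tfrac{n}{2} + 14\sqrt{n} \right) \;=\; \tfrac{34\,n}{75} - 42\sqrt{n}.
\]
For every vertex $w$ in this intersection, the triangles $wu_1u_2$ and $wu_3u_4$ are both present in $G$, and they share only $w$ because the four $u_i$ are distinct and lie in $S$ while $w \in T$. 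Hence each such $w$ is the center of a bowtie.

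The completely symmetric argument on the $T$-side, using the two edges $v_1v_2, v_3v_4$ and the vertices of $S$ simultaneously adjacent to all four $v_j$, would yield at least $\tfrac{34n}{75} - 42\sqrt{n}$ further bowties, each now centered at a vertex of $S$. Since a bowtie has a unique degree-$4$ vertex, the two families (with centers in $T$ and in $S$ respectively) are pairwise disjoint, so $G$ contains at least $\tfrac{68\,n}{75} - 84\sqrt{n}$ bowties in total. Elementary arithmetic shows that this quantity already exceeds $n/2$ whenever $n > 42\,670$, and hence certainly for $n \ge 8.8 \times 10^6$, giving the desired contradiction. The only delicate point will be confirming that the common-neighborhood estimate above is safely larger than $n/4$ on each side; this is where all of the $\sqrt{n}$ slack inherited from Lemmas \ref{lem-partition}, \ref{Lempty} and \ref{lem-W-sub-L} is absorbed, and no new ideas are needed beyond those already developed in Section \ref{sec6}.
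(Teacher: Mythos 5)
Your proposal is correct and follows essentially the same route as the paper's proof: both assume $e(S)=e(T)=2$ for contradiction, bound $d_T(u_i)>(\tfrac12-\tfrac1{200}-\tfrac1{150})n$ via $L=\varnothing$ and $W\subseteq L$, apply the inclusion--exclusion lemma to get a common neighborhood of linear size, count the resulting bowties centered in $T$ and symmetrically in $S$, and add the two disjoint families to exceed $\lfloor n/2\rfloor$. The only cosmetic difference is that you retain the exact expression $\tfrac{34n}{75}-42\sqrt{n}$ per side instead of rounding down to $\tfrac{2n}{5}$ as the paper does.
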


\begin{proof}
Note that $L=\varnothing$ by  Lemma \ref{Lempty}. 
Then for every vertex $v\in S$, we have 
$d(v) > (\frac{1}{2} - \frac{1}{200})n $ and 
$d_S(v) < \frac{n}{150}$. So $d_T(v) > (\frac{1}{2} - \frac{1}{200} - \frac{1}{150})n$. The corresponding degree condition also holds for 
each vertex of $T$. Suppose on the contrary 
that $e(S)=2 $ and $ e(T)=2$. 
Let $e_1=\{v_1,v_2\}$ and 
$e_2=\{v_3,v_4\}$ be two edges in $G[S]$. 
Hence, by Lemma~\ref{set}, we get that 
\[  \left| \bigcap_{i=1}^4 N_T(v_i) \right| >  4\left(\frac{1}{2} - \frac{1}{200} - \frac{1}{150} \right)n - 3\left(\frac{n}{2} + 14\sqrt{n} \right) > \frac{2n}{5},\]
where the last inequality holds for $n\ge 6.3\times 10^5$. 
Each vertex of the common neighbors of $v_1,v_2,v_3,v_4$ in $T$ can yield 
a copy of the bowtie. 
So there are more than $\frac{2n}{5}$ bowties 
with the center in $T$. The similar result holds 
for the edges in $T$.  
So there are more than $\frac{4n}{5}$ copies of bowtie in 
$G$, a contradiction. Therefore, we have 
$e(S) + e(T)\le 3$. 
\end{proof}

\subsection{Refining the structure} 

\label{sec-5-3}

In the proof of Lemma \ref{lem-partition}, 
we obtained that $G$ has 
less than $10n^2$ triangles and $\lambda (G) > {n}/{2}$,  which together with 
Lemma \ref{thm-BN-CFTZ-NZ} and Theorem \ref{thm-far-bipartite} yields an approximate 
structure of  $G$. 
Fortunately, using Lemma \ref{STle3}, 
we get $e(S)+e(T)\le 3$, which implies that 
$G$ has at most 
$3({n}/{2} + 14\sqrt{n})$ triangles. 
Next, we refine the bipartition of 
$G$ by using Lemma \ref{thm-BN-CFTZ-NZ} again, 
and then we show that 
$G$ is an almost-balanced complete bipartite graph.

\begin{lemma}\label{STlambdarefine}
We have
\begin{equation*}
e(G)\ge \left\lfloor \frac{n^2}{4} \right\rfloor - 5,
\end{equation*}
\[ e(S,T) \ge \left\lfloor \frac{n^2}{4} \right\rfloor - 8  \]
and 
\begin{equation*}
\frac{n}{2}-2
\le |S|,  |T|\le \frac{n}{2}+2. 
\end{equation*}
\end{lemma}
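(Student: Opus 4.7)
The plan is to leverage Lemma~\ref{STle3} ($e(S)+e(T)\le 3$) to improve the triangle count of Section~\ref{sec-5-3}, then feed the improved count back into Lemma~\ref{thm-BN-CFTZ-NZ} to obtain an integer-sharp lower bound on $e(G)$, and finally convert that bound into the claimed size constraints on $|S|$ and $|T|$.

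I will first estimate $t(G)$. By Lemma~\ref{lem-one-edge} combined with $L=\varnothing$ (Lemma~\ref{Lempty}), each of $e(S)$ and $e(T)$ is at most $2$, so no triangle can lie entirely within one part (three vertices in $S$ would force three edges inside $G[S]$). Hence every triangle of $G$ consists of one edge of $G[S]\cup G[T]$ together with a vertex of the opposite part that is a common neighbor of its endpoints, and using Lemma~\ref{lem-partition} this gives
\[
t(G)\le (e(S)+e(T))\cdot \max\{|S|,|T|\}\le 3\bigl(n/2+14\sqrt{n}\bigr).
\]

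I will then apply Lemma~\ref{thm-BN-CFTZ-NZ} in the rearranged form $e(G)\ge \lambda^2-3t(G)/\lambda$. Lemma~\ref{lem-F-G} gives $\lambda^2>\lfloor n^2/4\rfloor+4$ and $\lambda>n/2$, so
\[
\frac{3t(G)}{\lambda}\le \frac{9(n/2+14\sqrt{n})}{n/2}=9+\frac{252}{\sqrt{n}}<9.1
\]
once $n\ge 8.8\times 10^6$. Hence $e(G)>\lfloor n^2/4\rfloor-5.1$, and since $e(G)\in\mathbb{Z}$ the first inequality $e(G)\ge \lfloor n^2/4\rfloor-5$ follows. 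Subtracting $e(S)+e(T)\le 3$ immediately yields the second inequality $e(S,T)\ge \lfloor n^2/4\rfloor-8$.

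For the size bound I will use $e(S,T)\le |S||T|$ together with $|S|+|T|=n$. Writing $s=|S|$, the concave function $s(n-s)$ must satisfy $s(n-s)\ge \lfloor n^2/4\rfloor-8$, which confines $s$ to a short interval around $n/2$; a brief parity-sensitive check separating $n$ even from $n$ odd produces $|S|,|T|\in[n/2-2,n/2+2]$. The main obstacle is the tightness required in the edge-count step: the gap between $\lambda^2\ge \lfloor n^2/4\rfloor+4$ and the triangle-error term $3t(G)/\lambda\approx 9$ is only about $5$, so crossing the correct integer threshold $\lfloor n^2/4\rfloor-5$ rather than $\lfloor n^2/4\rfloor-6$ needs both the sharpened triangle bound from the first step and the explicit hypothesis $n\ge 8.8\times 10^6$ in order to suppress the $252/\sqrt{n}$ correction. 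The remaining parity analysis in the last step is routine by comparison with the parabola $s(n-s)$.
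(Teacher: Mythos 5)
Your proof is correct and follows essentially the same route as the paper: it bounds $t(G)\le 3(n/2+14\sqrt{n})$ from $e(S)+e(T)\le 3$, combines this with $\lambda^2>\lfloor n^2/4\rfloor+4$ in Lemma~\ref{thm-BN-CFTZ-NZ} to get $e(G)\ge\lfloor n^2/4\rfloor-5$, subtracts $e(S)+e(T)$ for the $e(S,T)$ bound, and finishes with $e(S,T)\le|S||T|$. The only difference is cosmetic: you track the error term as $3t/\lambda<9.1$, whereas the paper uses the equivalent $6t/n$ and the integer threshold $e(G)>\lfloor n^2/4\rfloor-6$, but the logic is identical.
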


\begin{proof}
 From Lemma \ref{STle3}, 
  we have $e(S) + e(T) \leq 3$.
  Since any triangle of $G$ must contain an 
  edge with two endpoints in $S$ or $T$, 
  the number of triangles in $G$ is bounded above by 
  $3({n}/{2} +14 \sqrt{n})$. 
  Moreover, we get from Lemma \ref{lem-F-G} that 
  \[ \lambda^2(G) \ge \lambda^2 (K_{\lceil \frac{n}{2} \rceil, \lfloor \frac{n}{2} \rfloor}^{+2}) > 
  \left\lfloor \frac{n^2}{4} \right\rfloor  +4. \]
       By Lemma \ref{thm-BN-CFTZ-NZ}, 
       we have $$e(G) > \lambda^2-\frac{6t}{n} 
       > \left\lfloor \frac{n^2}{4} \right\rfloor  - 6.$$
Then 
\[ e(S,T) = e(G) - e(S) - e(T)
 > \left\lfloor\frac{n^2}{4} \right\rfloor -9. \]
Note that $|S| + |T|=n$. The AM-GM inequality gives 
$ |S| |T| \le \lfloor n^2/4 \rfloor $. 
So $|S||T| - e(S,T) \le 
\lfloor n^2/4  \rfloor - e(S,T) <9$. 
In other words, there are at most $8$ edges missed between $S$ and $T$. 
Without loss of generality, 
we may assume that $|S| \le |T|$. 
       Suppose on the contrary  
       that $|S|\le \frac{n}{2}-3,$ then $|T|=n-|S|\ge \frac{n}{2}+3$. It follows that 
       $e(S,T)\le  |S||T|\le  \left (\frac{n}{2}-3 \right) \left(\frac{n}{2}+3 \right)= \frac{n^2}{4}-9$, 
        which is a contradiction. 
        Thus, we have
        $ \frac{n}{2}-2
\le |S|,  |T|\le \frac{n}{2}+2. $
\end{proof}

\begin{lemma} \label{lem-EST2}
We have $e(S) + e(T) = 2$. 
\end{lemma}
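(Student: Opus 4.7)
The plan is to combine Lemma~\ref{lem-one-edge} and Lemma~\ref{STle3}, which together give $e(S),e(T)\le 2$ and $e(S)+e(T)\le 3$, with a case analysis ruling out $e(S)+e(T)\in\{0,1,3\}$.

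If $e(S)+e(T)=0$, then $G$ is bipartite, so $\lambda(G)\le\lambda(T_{n,2})<\lambda(K_{\lceil n/2\rceil,\lfloor n/2\rfloor}^{+2})$ by Lemma~\ref{lem-F-G}, a contradiction. If $e(S)+e(T)=1$, the unique non-crossing edge is covered by every triangle, so any two triangles share two vertices and $G$ is $F_2$-free; Theorem~\ref{thm-n-F2} then yields $\lambda(G)\le\lambda(K_{\lfloor n/2\rfloor,\lceil n/2\rceil}^+)$, and a short quotient-matrix computation in the spirit of Lemma~\ref{lem-F-G} gives $\lambda(K_{\lfloor n/2\rfloor,\lceil n/2\rceil}^+)^2<\lfloor n^2/4\rfloor+4<\lambda(K_{\lceil n/2\rceil,\lfloor n/2\rfloor}^{+2})^2$, again contradicting the spectral hypothesis.

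For $e(S)+e(T)=3$, WLOG $e(S)=2$ and $e(T)=1$, with disjoint edges $e_1=u_1u_2,\ e_2=u_3u_4$ in $G[S]$ (disjointness from Lemma~\ref{lem-one-edge}) and $f=v_1v_2$ in $G[T]$. Every triangle uses exactly one of $\{e_1,e_2,f\}$, and a bowtie consists of two triangles using two distinct such edges. Lemma~\ref{STlambdarefine} limits the missing cross-edges to at most $8$. If fewer than $8$ of them lie in the ``bad grid'' $\{u_1,u_2,u_3,u_4\}\times\{v_1,v_2\}$, then inclusion-exclusion yields at least $|T|-8\ge\tfrac{n}{2}-10$ bowties centred in $T$ from the pair $(e_1,e_2)$, together with a disjoint family of $\Omega(n)$ further bowties centred at some $u_i$ or $v_j$ arising from the pairs $(e_1,f)$ and $(e_2,f)$ (via the indicators $\mathbf{1}(u_i\in N_S(v_1)\cap N_S(v_2))$ and $\mathbf{1}(v_j\in N_T(u_1)\cap N_T(u_2))$, at least one of which is forced to be $1$ in this sub-case), giving strictly more than $\lfloor n/2\rfloor$ bowties in total. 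Otherwise all $8$ missing edges fill the bad grid, yielding the explicit values $e(G)=\lfloor n^2/4\rfloor-5$ and $t(G)=2|T|+|S|-8\le\tfrac{3n}{2}$; invoking Lemma~\ref{thm-BN-CFTZ-NZ} together with $\lambda(G)>\tfrac{n}{2}$ then gives
\[ \lambda(G)^2\le e(G)+\frac{3t(G)}{\lambda(G)}<\lfloor n^2/4\rfloor-5+9=\lfloor n^2/4\rfloor+4<\lambda(K_{\lceil n/2\rceil,\lfloor n/2\rfloor}^{+2})^2, \]
again contradicting the spectral hypothesis. In every case we reach a contradiction, so $e(S)+e(T)=2$.

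The delicate point is the bad-grid sub-case of Case~$3$, where the bowtie count alone degenerates to only $|T|-2<\lfloor n/2\rfloor$, so the contradiction must come from the spectral Lemma~\ref{thm-BN-CFTZ-NZ} together with the strict inequality $\lambda(K_{\lceil n/2\rceil,\lfloor n/2\rfloor}^{+2})^2>\lfloor n^2/4\rfloor+4$ from Lemma~\ref{lem-F-G}; carefully balancing the precise values of $e(G)$ and $t(G)$ against this strict threshold is the key step.
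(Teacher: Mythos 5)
Your treatment of $e(S)+e(T)\in\{0,1\}$ is correct; the $e(S)+e(T)=1$ case via Theorem~\ref{thm-n-F2} (every $F_2$-free graph has $\lambda\le\lambda(K_{\lfloor n/2\rfloor,\lceil n/2\rceil}^+)$, and $\lambda(K_{\lfloor n/2\rfloor,\lceil n/2\rceil}^+)^2<\lfloor n^2/4\rfloor+4$) is a clean alternative to the paper's direct comparison of $h_{s,t}$ with $f$ and $g$. However, the $e(S)+e(T)=3$ case has a genuine gap.

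Your dichotomy is ``fewer than $8$ of the missing cross-edges lie in the bad grid'' versus ``all $8$ grid positions are missing,'' and in the first sub-case you assert that at least one of the indicators $\mathbf{1}(u_i\in N_S(v_1)\cap N_S(v_2))$ or $\mathbf{1}(v_j\in N_T(u_1)\cap N_T(u_2))$ must equal $1$. This is false. Consider the configuration in which exactly the four grid edges $u_2v_1,\ u_4v_1,\ u_1v_2,\ u_3v_2$ are missing (this is precisely the paper's graph $G_1$). Then every $u_i$ misses one of $v_1,v_2$, and each $v_j$ misses one of $\{u_1,u_2\}$ and one of $\{u_3,u_4\}$, so all your indicators vanish; only $4<8$ grid positions are missing, so this configuration falls in your first sub-case and the promised $\Omega(n)$ extra bowties do not exist. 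Indeed, one checks directly that $G_1$ has only $|T|-2$ bowties (all centred in $T$), which is at most $\lfloor n/2\rfloor$, so the bowtie count alone gives no contradiction.

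Your fallback spectral estimate also fails to close this hole. For $G_1$ you have $e(G_1)=|S||T|-4+3\le\lfloor n^2/4\rfloor-1$ and $t(G_1)=2|T|+|S|-8\approx 3n/2$, so Lemma~\ref{thm-BN-CFTZ-NZ} only yields
\[
\lambda(G_1)^2<e(G_1)+\frac{3t(G_1)}{\lambda(G_1)}<\Bigl(\bigl\lfloor n^2/4\bigr\rfloor-1\Bigr)+9=\bigl\lfloor n^2/4\bigr\rfloor+8,
\]
which does \emph{not} beat the threshold $\lambda(K_{\lceil n/2\rceil,\lfloor n/2\rfloor}^{+2})^2>\lfloor n^2/4\rfloor+4$. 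Your ``all-$8$-missing'' sub-case works precisely because there $e(G)$ drops to $\lfloor n^2/4\rfloor-5$, but that extra $-4$ is exactly what the $G_1$ configuration does not provide. The paper resolves this by first showing (via a bowtie-counting argument) that $G$ must in fact be a subgraph of $G_1$, and then computing the characteristic polynomial $f_{s,t}$ of $G_1$ explicitly to get the sharper bound $\lambda(G_1)<n/2<\lambda(K_{\lceil n/2\rceil,\lfloor n/2\rfloor}^{+2})$; the crude inequality $\lambda^2<m+3t/\lambda$ is not tight enough to substitute for that computation here.
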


\begin{proof}
From Lemma \ref{STle3}, 
we obtain that $e(S) \le 2,e(T)\le 2$ and 
$e(S) +e(T) \le 3$. 

For convenience, we denote 
$|S|=s$ and $|T|=t$. 
Firstly, if $e(S) + e(T)=0$, 
then $G$ is a bipartite graph with color classes $S$ and $T$. 
Then we have $\lambda (G)\le \sqrt{st} 
\le \lambda (T_{n,2})$, which is a contradiction since  
$\lambda (G) \ge 
\lambda (K_{\lceil \frac{n}{2} \rceil, \lfloor \frac{n}{2} \rfloor}^{+2}) > \lambda (T_{n,2})$. 

Secondly, we suppose on the contrary that 
 $e(S)+e(T)=1$, then we may assume that  
$e(S)=1$ and $ e(T)=0$. 
By Lemma \ref{STlambdarefine}, 
we have $\frac{n}{2} -2 \le |S| \le \frac{n}{2} +2$.  
Let $K_{s,t}^+$ be the graph obtained from 
$K_{s,t}$ by adding an edge to the part of size $s$. 
So $G$ is a subgraph of $K_{s,t}^+$ and 
$G$ does not contain a copy of the bowtie. 
By calculation, we know that 
$\lambda (K_{s,t}^+)$ is the largest root of 
\[  h_{s,t}(x) := x^3 - x^2 -s t x + s t - 2 t . \]
For each $\frac{n}{2} -2 \le s\le \frac{n}{2} +2$, 
we will prove that 
 \[  \lambda (K_{s, t}^{+}) < 
\lambda (K_{\lceil \frac{n}{2} \rceil, \lfloor \frac{n}{2} \rfloor}^{+2}), \]
and get a contradiction.   
We prove this inequality in two cases. 
For even $n$, 
recall in Lemma  \ref{lem-F-G} that $\lambda 
(K_{\frac{n}{2},\frac{n}{2}}^{+2})$ 
is the largest root of $f(x)$. 
Clearly, we have 
\[  h_{s,t}(x) - f(x)= 
(n^2/4 - st)(x-1)+2(n-t).  \]
It is easy to check that for each $s\in[ \frac{n}{2} -2, \frac{n}{2} +2]$ and $t=n-s$, we have 
$h_{s,t}(x) - f(x) >0$ for every $x> {2}$.  
Then Lemma \ref{lem-roots}  
implies $\lambda (K_{s,t}^+) < \lambda 
(K_{\frac{n}{2},\frac{n}{2}}^{+2})$, 
a contradiction. 
For odd $n$, let $g(x)$ be defined in Lemma \ref{lem-F-G}. Then 
\[  h_{s,t}(x) - g(x) = ((n^2-1)/4 - st)(x-1) + 
2(n-t) - 3/2.    \]
For each $s\in [\frac{n+1}{2} - 2, \frac{n-1}{2} +2]$, 
it follows that $h_{s,t}(x) - g(x) >0$ for any $x>0$. 
Consequently, we have $\lambda (K_{s,t}^+) < \lambda (K_{ \frac{n+1}{2}, \frac{n-1}{2}}^{+2})$, 
which is a contradiction. 

Thirdly,  
we shall prove that $e(S) + e(T)\neq 3$. 
Without loss of generality, we may assume 
on the contrary  that 
$e(S)=2$ and $e(T)=1$. 
In this case, we will show that 
$G$ is a subgraph of $G_1$ in Figure \ref{fig-S2T1}.  
Let $u_1u_2$ and $u_3u_4$ be two edges in 
$G[S]$, and let $v_1v_2$ be the unique edge in $G[T]$. 
Using Lemma \ref{STlambdarefine}, 
we have $e(S,T) \ge \lfloor n^2/4\rfloor -8$ and so 
$|S||T| - e(S,T) \le \lfloor n^2/4\rfloor - e(S,T)\le 8$. 
Thus, we lose at most $8$ edges from the complete bipartite graph between $S$ and $T$. 
Consequently, there are at least $t-8$ (blue) bowties 
that center at a vertex of $T$ and contain the edges $u_1u_2$ and $u_3u_4$. 
Similarly, there are at least $s-8$ (green) triangles containing 
the edge $v_1v_2$. 
We claim that $v_1$ has at most one neighbor in 
$\{u_1,u_2\}$. Indeed, if $v_1u_1\in E(G)$ and 
$v_1u_2\in E(G)$, then $v_1u_1u_2$ forms a triangle. Combining with those $s-8$ 
triangles mentioned in the above, 
we can find $s-8$ new bowties centered at $v_1$, 
which yields at least $s+t-16 =n-16$ bowties in $G$, 
a contradiction.  Similarly, $v_1$ has at most one neighbor in $\{u_3,u_4\}$. Moreover, the same result holds for the vertex $v_2$. Without loss of generality, 
we may assume that $v_1u_1 \in E(G)$ and $v_1u_3 \in E(G)$. Under this assumption, 
we claim that $v_2$ can not be adjacent to 
$u_1$ or $u_3$. Otherwise, if $v_2u_1\in E(G)$, 
then it leads to at least $t-10$ new bowties centered at $u_1$ and so $G $ has at least $2t- 18 \ge n-22$ 
bowties in total, a contradiction. 
Thus, $v_2$ is not adjacent to $u_1$ or $u_3$. 
Note that $v_2$ can be adjacent to $u_2$ and $u_4$. 
We conclude that 
$G$ is a subgraph of $G_1$.

 \begin{figure}[H]
\centering
\includegraphics[scale=0.85]{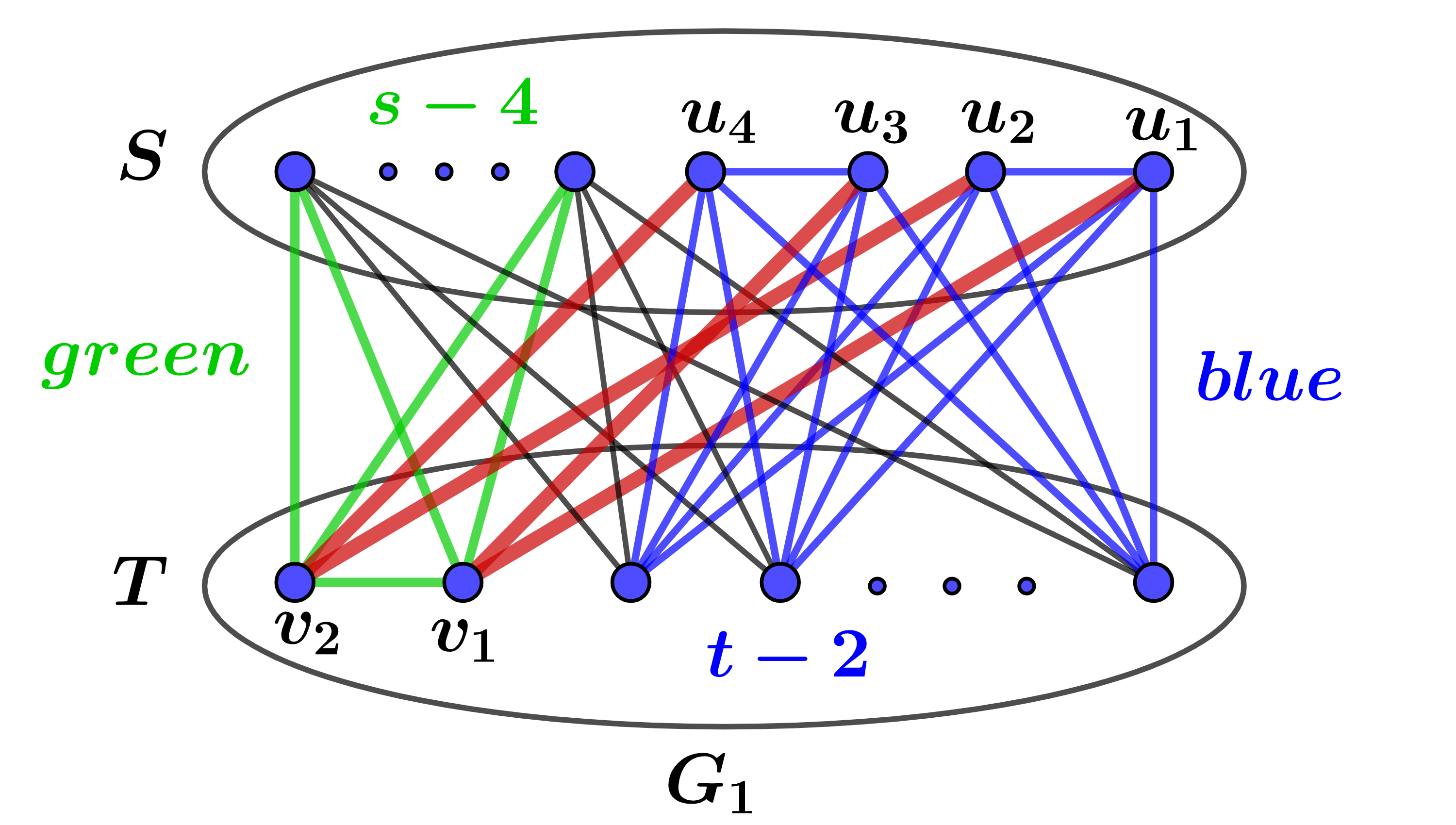} 
\caption{The graph $G_1$.}
\label{fig-S2T1}
\end{figure}
 
 Observe that the graph $G_1$ in above contains $t-2$ bowties, 
 and $G_1$ misses $4$ edges between $S$ and $T$. 
 So we get $e(G_1)=e_{{\blue G_1}}(S,T) +3 \le \lfloor n^2/4\rfloor -1$.  
Next, we will deduce a contradiction by showing 
\[ \lambda (G_1) < 
\lambda (K_{\lceil \frac{n}{2} \rceil,\lfloor \frac{n}{2} \rfloor}^{+2}). \]  
By computation, we obtain that 
$\lambda (G_1)$ is the largest root of 
\[  f_{s,t}(x) := x^4  - 2 x^3 + 7 x^2 -s t x^2 
 + 2 s t x - 2 s x  - 4 t x - 4 t  - 2 s + s t  + 8. \]
 For notational convenience, 
we partition the proof 
according to the parity of $n$. 
Assume that $n$ is even. 
For each $s\in [\frac{n}{2} -2 ,\frac{n}{2} +2]$ and $t=n-s$, 
it is easy to verify that 
$f_{s,t}(\frac{n}{2}) >0$. 
Indeed, for example, if $s= \frac{n}{2}$ and 
$t=\frac{n}{2} $, then 
$ f_{s,t}(x) = 8 + 7 x^2 - 2 x^3 + x^4 - 3 n (1 + x) 
- \frac{1}{4} n^2 (-1 - 2 x + x^2)$. 
So $f_{s,t}(\frac{n}{2}) = \frac{1}{2} n^2 - 3 n +8>0$.  
By Lemma \ref{lem-root}, we get 
$  \lambda (G_1)< \frac{n}{2} < 
\lambda (K_{\frac{n}{2}, \frac{n}{2}}^{+2})$.  
 Assume that $n$ is odd. 
In this case, we have
 $s\in [\frac{n+1}{2} -2,  \frac{n+1}{2} +1]$. 
A similar calculation implies that 
$\lambda (G_1)< \frac{n}{2} < \lambda (K_{\frac{n+1}{2}, \frac{n-1}{2}}^{+2})$. 
\end{proof}

By Lemma \ref{lem-EST2},  
we can show the following 
improvement on Lemma \ref{STlambdarefine}. 

\begin{lemma}\label{ST-refine}
We have
$e(G)\ge \lfloor {n^2}/{4} \rfloor - 2$ 
and 
$e(S,T) \ge \lfloor {n^2}/{4} \rfloor - 4$. 
\end{lemma}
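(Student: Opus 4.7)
\medskip
\noindent
\textbf{Proof proposal for Lemma \ref{ST-refine}.}
The plan is to re-run the inequality from Lemma \ref{thm-BN-CFTZ-NZ} with a sharper bound on $t(G)$ than the one used in Lemma \ref{STlambdarefine}. There, the estimate $t(G) \le 3(n/2 + 14\sqrt{n})$ was based only on $e(S) + e(T) \le 3$; now we have the improvement $e(S) + e(T) = 2$ from Lemma \ref{lem-EST2}, which is what allows the gain of three edges.

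First, I would bound the triangles. Every triangle of $G$ must contain an edge lying entirely inside $S$ or entirely inside $T$, since $G[S,T]$ is bipartite. Each such interior edge $uv$ is contained in at most $\min\{|S|,|T|,\ldots\}\le \max\{|S|,|T|\}$ triangles, the third vertex being a common neighbor on the opposite side. Combining with Lemma \ref{lem-EST2} and the bound $\max\{|S|,|T|\}\le n/2+2$ from Lemma \ref{STlambdarefine}, this yields
\[
  t(G)\;\le\;\bigl(e(S)+e(T)\bigr)\max\{|S|,|T|\}\;\le\;2\left(\frac{n}{2}+2\right)\;=\;n+4.
\]

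Next, I would apply Lemma \ref{thm-BN-CFTZ-NZ} in the form $e(G)\ge \lambda^2 - 3t(G)/\lambda$. By assumption and Lemma \ref{lem-F-G}, $\lambda^2(G)\ge \lambda^2(K_{\lceil n/2\rceil,\lfloor n/2\rfloor}^{+2}) > \lfloor n^2/4\rfloor + 4$, and in particular $\lambda(G)>n/2$. Substituting,
\[
  e(G)\;>\;\left\lfloor\frac{n^2}{4}\right\rfloor+4-\frac{6(n+4)}{n}\;=\;\left\lfloor\frac{n^2}{4}\right\rfloor-2-\frac{24}{n}.
\]
For $n$ as large as in our setting, $24/n<1$, and since $e(G)$ is an integer we conclude $e(G)\ge \lfloor n^2/4\rfloor - 2$. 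Finally, $e(S,T)=e(G)-e(S)-e(T)=e(G)-2\ge \lfloor n^2/4\rfloor-4$, giving the second bound.

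There is essentially no obstacle here: the content is purely a tightened arithmetic pass through the spectral triangle-counting inequality, leveraging the structural refinement $e(S)+e(T)=2$ already proved. The only subtlety is making sure the resulting strict inequality on the real-valued right-hand side is large enough that rounding up to the next integer delivers the claimed $-2$ (rather than $-3$), which is why the preceding lemmas needed to establish $\lambda^2 > \lfloor n^2/4\rfloor + 4$ and $\max\{|S|,|T|\}\le n/2+2$ rather than any weaker approximations.
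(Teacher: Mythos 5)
Your proof is correct and follows essentially the same route as the paper: the paper's proof of this lemma simply invokes $e(S)+e(T)=2$ to bound $t(G)\le 2(n/2+2)$ and then reruns the Lemma~\ref{thm-BN-CFTZ-NZ} computation from Lemma~\ref{STlambdarefine}, which is exactly what you spell out. (The one stray notational slip is the ``$\min\{|S|,|T|,\ldots\}$'' phrase, which should just read that an interior edge in $S$ has its third vertex in $T$ and vice versa, giving the bound $\max\{|S|,|T|\}$ per interior edge.)
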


\begin{proof}
From Lemma \ref{lem-EST2}, 
we obtain $e(S) + e(T)=2$. 
Observe that each triangle in $G$ contains an edge within $S$ or $T$. 
So there are at most $2(\frac{n}{2} + 2)$ triangles in $G$. A similar argument in the proof of Lemma \ref{STlambdarefine} can give the desired bounds. 
\end{proof}

\begin{lemma} \label{SorT=2}
Either $e(S)=2$ or $e(T)=2$. 
\end{lemma}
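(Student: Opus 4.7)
The plan is to argue by contradiction: suppose $e(S)=e(T)=1$, and let $u_1u_2 \in E(G[S])$ and $v_1v_2 \in E(G[T])$ be the unique inner edges. By Lemma~\ref{ST-refine}, the bipartite subgraph $G[S,T]$ is missing at most $|S||T|-e(S,T) \le 4$ edges from $K_{|S|,|T|}$. I first count bowties. Every triangle of $G$ must contain an inner edge, hence it uses either $u_1u_2$ or $v_1v_2$; thus the triangles split into two books of sizes $c := |N_T(u_1)\cap N_T(u_2)| \ge t-4$ and $c' := |N_S(v_1)\cap N_S(v_2)| \ge s-4$. Any vertex outside $\{u_1,u_2,v_1,v_2\}$ lies in at most one of these triangles, so every bowtie must be centered at one of those four vertices. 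Writing $a_i$ for the indicator of $\{u_iv_1, u_iv_2\}\subseteq E(G)$ and $b_j$ for the indicator of $\{v_ju_1, v_ju_2\}\subseteq E(G)$, enumerating bowties by their center gives
\[
\#\{\text{bowties in } G\} \;\ge\; (a_1+a_2)(c-2) \;+\; (b_1+b_2)(c'-2).
\]
Each factor on the right is at least $n/2 - 8$, so the hypothesis $\#\text{bowties} \le \lfloor n/2\rfloor$ combined with $n \ge 8.8\times 10^6$ forces $a_1+a_2+b_1+b_2 \le 1$.

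Next I would classify the possible structures. A short case check of the four indicators shows that $a_1+a_2+b_1+b_2 \le 1$ forces at least two of the cross-edges $\{u_iv_j\}_{i,j\in\{1,2\}}$ to be absent in $G$, because a missing edge $u_iv_j$ kills exactly $a_i$ and $b_j$, so two missing edges are necessary to kill all four indicators (case $=0$), while in the case $=1$ one checks directly that the unique positive indicator forces two missing edges incident to a single vertex. Up to the symmetries $u_1\leftrightarrow u_2$, $v_1\leftrightarrow v_2$ and $S\leftrightarrow T$, the two missing edges fall into one of two configurations: (A) both share a common vertex, say $u_2v_1, u_2v_2 \notin E(G)$; or (B) they form a perfect matching, say $u_1v_1, u_2v_2 \notin E(G)$. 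Since $G[S,T]$ has at most four missing edges in total, $G$ is a spanning subgraph of the graph $H$ obtained from $K_{s,t}$ by adding $\{u_1u_2, v_1v_2\}$ and removing the two prescribed non-edges, so $\lambda(G)\le\lambda(H)$.

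Finally, I would bound $\lambda(H)$ from above via an equitable quotient. In configuration (B), merging $\{u_1,u_2\}$ and $\{v_1,v_2\}$ yields a $4$-class equitable partition; when $s=t$ the graph $H$ is $(n/2)$-regular, so $\lambda(H) = n/2 < \lambda(K_{\lceil n/2\rceil, \lfloor n/2\rfloor}^{+2})$ by Lemma~\ref{lem-F-G}, and the handful of nearly balanced cases $|S|\neq|T|$ are dispatched by evaluating the characteristic polynomial of the $4\times 4$ quotient at $x=n/2$ and applying Lemma~\ref{lem-root}. In configuration (A), the partition
\[
\{u_1\},\ \{u_2\},\ S\setminus\{u_1,u_2\},\ \{v_1,v_2\},\ T\setminus\{v_1,v_2\}
\]
is equitable with five classes; writing out the characteristic polynomial of the resulting $5\times 5$ quotient and checking its sign at $x = n/2$ for each admissible value of $|S| \in \{\lfloor n/2\rfloor-2,\ldots,\lceil n/2\rceil+2\}$ yields $\lambda(H) < \lambda(K_{\lceil n/2\rceil,\lfloor n/2\rfloor}^{+2})$ via Lemmas~\ref{lem-root} and~\ref{lem-roots}. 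This contradicts $\lambda(G) \ge \lambda(K_{\lceil n/2\rceil,\lfloor n/2\rfloor}^{+2})$, completing the argument. The main obstacle is precisely this last verification in configuration (A): the quotient has size five and $H$ fails to be regular when $s\neq t$, so one must verify the sign of a quintic polynomial at $x=n/2$ for five choices of $|S|$. These sign-checks are entirely analogous to (and no more delicate than) the computations already carried out in the proof of Lemma~\ref{lem-plus2} for the graphs $L_1,\ldots,L_{13}$, so no essentially new techniques are required.
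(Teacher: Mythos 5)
Your proposal is correct and follows essentially the same route as the paper: reduce to the case $e(S)=e(T)=1$, observe that all triangles live in two books on $u_1u_2$ and $v_1v_2$, count bowties centered at $\{u_1,u_2,v_1,v_2\}$ to conclude that at least two of the four cross-edges $u_iv_j$ are missing, note that this makes $G$ a spanning subgraph of one of two canonical graphs (your $H_A,H_B$ correspond to the paper's $H_1,H_2$ in Figure~\ref{fig-ST1}), and then beat $\lambda(K_{\lceil n/2\rceil,\lfloor n/2\rfloor}^{+2})$ by evaluating the quotient characteristic polynomial near $x=n/2$. The paper indeed uses a five-class equitable partition for $H_1$ (its $b_{s,t}$ is a quintic) and exploits the $(n/2)$-regularity of $H_2$ when $s=t$, exactly as you suggest.

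Where you differ is only in how the two configurations are isolated. The paper argues by ad hoc cases on adjacency (``$v_1$ adjacent to both $u_1,u_2$'' vs.\ ``$v_1$ adjacent to $u_1$ but not $u_2$'') with the remaining subcases recovered implicitly by relabeling; your indicator variables $a_i,b_j$ and the inequality
\[
\#\{\text{bowties}\}\ \ge\ (a_1+a_2)(c-2)+(b_1+b_2)(c'-2)
\]
give a uniform, symmetry-aware derivation that visibly covers every case (including the one where $v_1$ is adjacent to neither $u_i$, which the paper's case labels do not mention explicitly) and cleanly explains why exactly two missing cross-edges, in either the ``shared vertex'' or ``matching'' pattern, must occur. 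This is a genuine, if modest, improvement in transparency. The trade-off is that you leave the quintic/quartic sign-checks as a sketch, whereas the paper carries them out; but as you note, those verifications are routine and of the same type as the ones already done in Lemma~\ref{lem-plus2}.
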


\begin{proof}
By Lemma \ref{lem-EST2}, we know that 
$e(S) + e(T)=2$. So we need to exclude the case 
$e(S) =1$ and $e(T)=1$. 
Suppose on the contrary that 
$e(S)=e(T)=1$. 
Let $\{u_1,u_2\}$ and $\{v_1,v_2\}$ 
be the two edges in $S$ and $T$, respectively.  
We will present the proof in two cases. 
In each case, we will show that $G$ is a subgraph of a certain graph, which has spectral radius less than 
that of $K_{\lceil \frac{n}{2} \rceil,\lfloor \frac{n}{2} \rfloor}^{+2}$.  So we obtain 
$\lambda (G) < \lambda (K_{\lceil \frac{n}{2} \rceil,\lfloor \frac{n}{2} \rfloor}^{+2})$, which contradicts with our assumption. 

 \begin{figure}[H]
\centering
\includegraphics[scale=0.85]{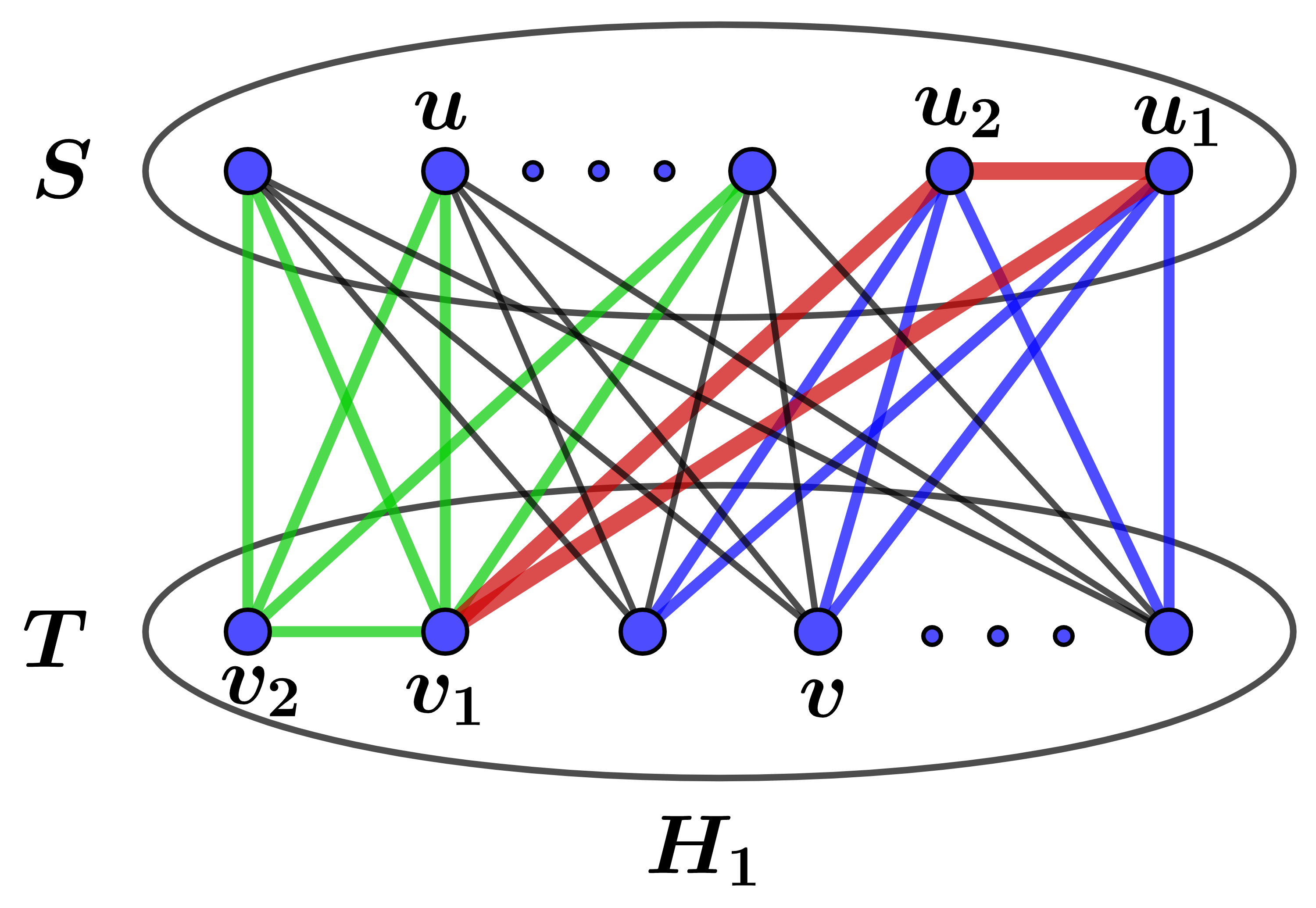} 
\quad \quad 
\includegraphics[scale=0.85]{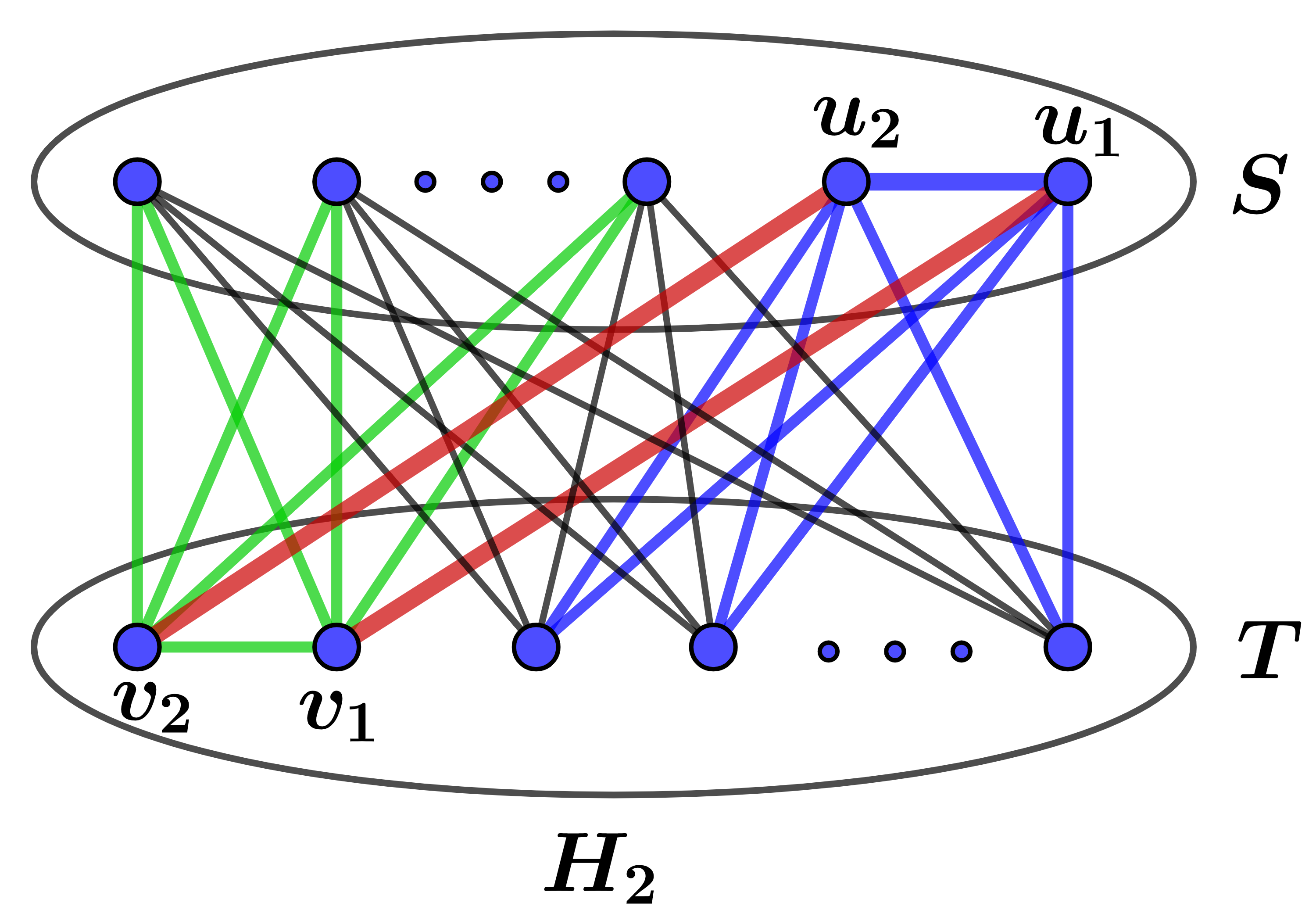}
\caption{The graphs $H_1$ and $H_2$.}
\label{fig-ST1}
\end{figure}

{\bf Case 1.} 
Suppose that 
$v_1$ is adjacent to both $u_1$ and $u_2$. 
Then each green triangle $uv_1v_2$ 
together with the red triangle $u_1u_2v_1$ 
forms a bowtie with the center $v_1$. 
Observe that $|S||T| - e(S,T) 
\le \lfloor n^2/4\rfloor - e(S,T)\le 4$, i.e., 
 at most $4$ edges are missed between 
$S$ and $T$. 
So there are at least $s-6$ green triangles and 
then there are at least $s-6$ bowties with the center 
$v_1$. 
We claim that $v_2u_1\notin E(G)$ and 
$v_2u_2\notin E(G)$. 
Otherwise, if $v_2u_1\in E(G)$, then 
$u_1v_1v_2$ forms a triangle. 
Observe that 
each blue triangle $u_1u_2v$ together with 
$u_1v_1v_2$ forms a bowtie centered at $u_1$. 
So we can find at least $t-6$ new bowties. 
 In total, we can find at least $s+t-12=n-12$ 
 bowties in $G$, a contradiction. So we must have 
 $v_2u_1\notin E(G)$. Similarly, we can get 
 $v_2u_2\notin E(G)$ as well.  
 Therefore, we conclude that 
 $G$ is a subgraph of $H_1$ in Figure \ref{fig-ST1}. 
 To deduce a contradiction, we will show that 
 \[ \lambda (H_1) 
 < \lambda (K_{\lceil \frac{n}{2} \rceil,\lfloor \frac{n}{2} \rfloor}^{+2}). \] 
  Upon computation, 
 we get that $\lambda (H_1)$ is the largest root of
 \begin{eqnarray*}
  b_{s,t}(x)&:=& x^5 - x^4 + x^3 - s t x^3 + 5 x^2 - 2 s x^2 -  2 t x^2 + s t x^2 \\
  & & 
 - 2 s x - 4 t x + 3 s t x -4 + 2 s + 2 t - s t .  
 \end{eqnarray*}
 By a direct calculation, we can verify that for 
 each $s\in [\frac{n}{2} -2, \frac{n}{2} +2]$ and $t=n-s$,  it follows that $b_{s,t}(x) > x^2f(x)$ for every 
 $x\ge 1$, 
 where $f(x)$ is the polynomial 
 in Lemma \ref{lem-F-G}. 
 For example, for even $n$, taking $s=\frac{n}{2}$ and $t=\frac{n}{2}$, we have $b_{s,t}(x)=-4 + 2 n - n^2/4 - 3 n x + (3 n^2 x)/4 + 5 x^2 - 2 n x^2 + (
 n^2 x^2)/4 + x^3 - (n^2 x^3)/4 - x^4 + x^5$. 
Then 
 \[  x^2f(x) - b_{s,t}(x) = 4 - 5 x^2 - x^3 + n (-2 + 3 x) - \tfrac{1}{4} n^2 (-1 + 3 x). \] 
 One can check that $x^2f(x) - b_{s,t}(x) < 0$ 
 for any $x\ge 1$. 
 Recall that in Lemma \ref{lem-F-G} that 
 $\lambda (K_{ \frac{n}{2},\frac{n}{2}}^{+2})$ is the largest root of $f(x)$. 
 So we get $b_{s,t}(x) > 
x^2f(x)\ge 0$ 
 for any $x\ge 1$. 
Lemma \ref{lem-roots} yields $\lambda (H_1) < 
 \lambda (K_{ \frac{n}{2},\frac{n}{2}}^{+2})$. 
 In fact, for the other case, namely, $s=\frac{n}{2}\pm 1$ or $\frac{n}{2}\pm 2$, we can show by Lemma \ref{lem-root}  that $\lambda (H_1)< \frac{n}{2} < \lambda (K_{ \frac{n}{2},\frac{n}{2}}^{+2})$, as needed. 
 
 {\bf Case 2.} 
 Suppose that $v_1$ is adjacent to $u_1$,  
  but $v_1$ is not adjacent to $u_2$. 
  In this case, the vertex $v_2$ can be adjacent to 
  $u_2$.   If $v_2$ is adjacent to $u_1$, then 
  we can turn to the previous case by symmetry. So we may assume that $v_2u_1\notin E(G)$ and 
  then $G$ is a subgraph of $H_2$ in Figure \ref{fig-ST1}. We remark here that the graph $H_2$ contains no copy of the bowtie. By computation, we know that 
  $\lambda (H_2)$ is the largest root of 
\[ d_{s,t}(x) := x^4 - 2 x^3+ 4 x^2 - s t x^2
-2 s x - 2 t x + 2 s t x.  \]  
Similarly, we can show that for every 
$s\in [\frac{n}{2} -2, \frac{n}{2} +2]$, we have 
 \[ \lambda (H_2) 
 < \lambda (K_{\lceil \frac{n}{2} \rceil,\lfloor \frac{n}{2} \rfloor}^{+2}). \] 
Next, we only prove the inequality for even $n$, 
since the case for odd $n$ can be proved similarly. 
 For $s=t=\frac{n}{2}$, we see that 
$H_2$ is a regular graph with degree $\frac{n}{2}$, 
so $\lambda (H_2)= \frac{n}{2} < \lambda (K_{ \frac{n}{2},\frac{n}{2}}^{+2})$. 
  For $s=\frac{n}{2} +1$ and $t=\frac{n}{2} -1$, 
  we can check that $d_{s,t}(\frac{n}{2}) = -n + n^2/4 >0$. 
  So Lemma \ref{lem-root} gives 
   $\lambda (H_2) < \frac{n}{2} < \lambda (K_{ \frac{n}{2},\frac{n}{2}}^{+2})$. 
  By the symmetry, the same result holds for 
  $s=\frac{n}{2}-1$ and $t=\frac{n}{2} +1$. 
  In addition, for $s=\frac{n}{2} \pm 2$ and 
  $t=n-s$, we have $d_{s,t}(\frac{n}{2} ) = -4 n + n^2 >0$. 
  To sum up, for each $s\in [\frac{n}{2}-2, \frac{n}{2} +2]$,   we have $\lambda (H_2) < \frac{n}{2} < \lambda (K_{ \frac{n}{2},\frac{n}{2}}^{+2})$, as desired. 
\end{proof}

\subsection{Completing the proof}

Having finished all the necessary preparations in Lemma \ref{lem-AS} -- 
Lemma \ref{SorT=2}, 
we are now in a position to complete the proof of our main result. 

\begin{proof}[{\bf Proof of Theorem \ref{thm-bowtie}}] 
Recall 
that  $G$ is a graph 
$\lambda (G) \ge \lambda (K_{\lceil \frac{n}{2} \rceil, \lfloor \frac{n}{2} \rfloor}^{+2})$ and $G$ 
has at most $\lfloor \frac{n}{2}\rfloor$ bowties. 
We need to show that $G$ is the extremal graph $K_{\lceil \frac{n}{2} \rceil, \lfloor \frac{n}{2} \rfloor}^{+2}$. 
By Lemmas \ref{lem-one-edge} and \ref{Lempty},  
we  get $e(S) \le 2 $ and $e(T) \le 2$. Moreover $G[S]$ and $G[T]$ 
are $K_{1,2}$-free. 
From Lemma \ref{ST-refine}, 
we have $e(G)\ge \lfloor n^2/4\rfloor -2$. 
 Furthermore, 
we know that $G$ has a vertex partition $V(G)=S\cup T$ 
such that $e(S,T)\ge \lfloor n^2/4\rfloor  -4$ and 
\[  \frac{n}{2}  -2 \le |S|, |T| \le  \frac{n}{2}  +2. \]   
By Lemma \ref{SorT=2}, we have 
$e(S)=2$ or $e(T)=2$. 
By Theorem \ref{thmNZ2021}, we know that $G$ has 
at least $\lfloor n/2\rfloor -1$ triangles.  
In what follows, we shall deduce a contradiction.

\medskip 
{\bf Case 1.} Assume that $n$ is even.

{\bf Subcase 1.1.}  $|S|= \frac{n}{2} $ and $|T|=\frac{n}{2} $.  

By the symmetry, 
we may assume that $e(S)=2$. 
Then $G$ is a subgraph of $K_{\frac{n}{2} , \frac{n}{2}}^{+2}$. 
 Since $\lambda (G) \ge \lambda (K_{\frac{n}{2} , \frac{n}{2}}^{+2})$, we obtain $G=K_{\frac{n}{2} , \frac{n}{2}}^{+2}$, 
 which is the extremal graph.

{\bf Subcase 1.2.}  $|S|= \frac{n}{2} -1$ and $|T|=\frac{n}{2} +1$.

In this case, 
 we have $n^2/4 -4 \le e(S,T) \le n^2/4 -1$. 
So there are at most three missing edges between 
$S$ and $T$. 
If $e(S)=2$, then 
$G$ is a subgraph of $K_{\frac{n}{2} -1, \frac{n}{2} +1}^{+2}$. 
Upon computation, we obtain that 
 $\lambda (K_{\frac{n}{2} -1, \frac{n}{2} +1}^{+2})$ 
is the largest root of 
\[  f_1(x):=-5 - 2 n + n^2/4 + x - (n^2 x)/4 - x^2 + x^3. \]
Recall in Lemma \ref{lem-F-G} that 
$ \lambda (K_{\frac{n}{2} , \frac{n}{2} }^{+2})$ is the largest root of 
\[  f(x)= x^3 - x^2 - (n^2 x)/4 + n^2/4 - 2 n. \]  
Observe that $f_1(x) - f(x)= -5+x >0$ for any $x>5$. 
Then $f_1(x) > f(x) \ge 0$ 
for any $x> 5$. 
Lemma \ref{lem-roots} gives $\lambda (K_{\frac{n}{2} -1, \frac{n}{2} +1}^{+2}) 
< \lambda (K_{\frac{n}{2} , \frac{n}{2} }^{+2})$, a contradiction.

If $e(T)=2$, then $G$ is a subgraph of $K_{\frac{n}{2} +1, \frac{n}{2} -1}^{+2}$. 
By computation, we know that 
$\lambda (K_{\frac{n}{2} +1, \frac{n}{2} -1}^{+2})$ 
is the largest root of 
\[  f_2(x) := 3 - 2 n + n^2/4 + x - (n^2 x)/4 - x^2 + x^3.  \]
It is easy to see that $f_2(x) - f(x)= 3+x >0 $ for any $x>0$. 
Similarly, by Lemma \ref{lem-roots}, 
we can obtain $\lambda (K_{\frac{n}{2} +1, \frac{n}{2} -1}^{+2}) < \lambda (K_{\frac{n}{2} , \frac{n}{2} }^{+2})$, 
 which is a contradiction.

{\bf Subcase 1.3.}  $|S|= \frac{n}{2} -2$ and $|T|=\frac{n}{2} +2$.

In this case, we have $e(S,T)\le |S| |T| \le 
 n^2/4  -4$, and so we must have 
$e(S,T) = n^2/4  -4$. In other words, 
the induced subgraph $G[S,T]$ is a complete bipartite graph. 
 If $e(S)=2$, then $G=K_{\frac{n}{2} -2, \frac{n}{2} +2}^{+2}$ 
 and $G$ has exactly $\frac{n}{2} +2$ bowties, 
a contradiction.  

If $e(T)=2$, then  $G= K_{\frac{n}{2} +2, \frac{n}{2} -2}^{+2}$. 
By calculation, we obtain that $\lambda (K_{\frac{n}{2} +2, \frac{n}{2} -2}^{+2}) $ is the largest root of 
\[   f_3(x) := 4 - 2 n + n^2/4 + 4 x - (n^2 x)/4 - x^2 + x^3.   \]
It is easy to verify that $f_3(\frac{n}{2}) = 4 >0$. 
Using Lemma \ref{lem-root}, 
 we have 
 \[ \lambda (K_{\frac{n}{2} +2, \frac{n}{2} -2}^{+2}) < \frac{n}{2} <
 \lambda (K_{\frac{n}{2},\frac{n}{2}}^{+2}), \]  
 which is a contradiction.

\medskip 
{\bf Case 2.} Suppose that $n$ is odd. 

{\bf Subcase 2.1.}  $|S|= \frac{n-1}{2} $ and $|T|=\frac{n+1}{2}$.

If $e(T)=2$, then $G$ is a subgraph of $K_{\frac{n+1}{2} , \frac{n-1}{2} }^{+2}$. 
Since $\lambda (G) \ge \lambda (K_{\frac{n+1}{2} , \frac{n-1}{2} }^{+2})$, we must have $G=K_{\frac{n+1}{2} , \frac{n-1}{2} }^{+2}$, as desired.   
If $e(S)=2$, then $G$ is a subgraph of $K_{\frac{n-1}{2} , \frac{n+1}{2} }^{+2}$. 
Note that $G$ has at most $\frac{n-1}{2}$ bowties and 
$K_{\frac{n-1}{2} , \frac{n+1}{2} }^{+2}$ has exactly 
$\frac{n+1}{2}$ bowties. 
So $G$ is obtained from $K_{\frac{n-1}{2} , \frac{n+1}{2} }^{+2}$ by deleting some edges 
that destroy at least one bowtie. 
Let $K_{\frac{n-1}{2} , \frac{n+1}{2} }^{+2\,\, |}$ be
the graph obtained 
from $K_{\frac{n-1}{2} , \frac{n+1}{2} }^{+2}$ by deleting 
an edge between $S$ and $T$ which is incident to an edge 
of $G[S]$. 
Furthermore, $G$ is a subgraph of 
$K_{\frac{n-1}{2} , \frac{n+1}{2} }^{+2\,\, |}$. 
It is not hard to deduce that 
$\lambda (K_{\frac{n-1}{2} , \frac{n+1}{2} }^{+2\,\, |})$ 
is the largest root of 
\begin{eqnarray*}
  f_4(x) &: =& x/2 + 4 n x - (n^2 x)/2 - (3 x^2)/2 - n x^2 + (n^2 x^2)/2 - x^3/4 \\ 
  & & -  2 n x^3 + (n^2 x^3)/4 + x^4/4 - (n^2 x^4)/4 - x^5 + x^6.  
 \end{eqnarray*}
By Lemma \ref{lem-F-G}, we know that $\lambda (K_{ \frac{n+1}{2}, \frac{n-1}{2}}^{+2})$ 
 is the largest root of 
 \[  g(x)=x^3 - x^2 + x/4 - (n^2 x)/4 + n^2/4 - 2 n + 7/4. \]    
 By computation, we get 
 \[ f_4(x) - x^3g(x) = - ({x}/{2}) (4x^2 + 
 (2n-n^2+3)x  + n^2 -8n -1). \]   
It is not hard to verify that $f_4(x) - x^3 g(x) >0$ 
for any $x\in (\frac{n}{2}, \frac{n}{2} +1)$. 
By Lemma \ref{lem-roots}, we can obtain 
$\lambda (K_{\frac{n-1}{2} , \frac{n+1}{2} }^{+2\,\, |})< 
\lambda (K_{ \frac{n+1}{2}, \frac{n-1}{2}}^{+2})$, 
a contradiction.

\medskip 
{\bf Subcase 2.2.}  $|S|= \frac{n-3}{2} $ and $|T|=\frac{n+3}{2}$. 

If $e(S)=2$, then $G$ is a subgraph of $K_{\frac{n-3}{2} , \frac{n+3}{2} }^{+2}$. 
Notice that $\lambda(K_{\frac{n-3}{2} , \frac{n+3}{2} }^{+2})$ 
is the largest root of 
\[ f_5(x) := -(33/4) - 2 n + n^2/4 + (9 x)/4 - (n^2 x)/4 - x^2 + x^3.  \]
Moreover, we have $f_5(x) - g(x)= -10 + 2 x >0$ 
for any $x>5$. Then 
$f_5(x) > g(x) \ge 0$ for any 
$x>5$. 
 So Lemma \ref{lem-roots}
 implies $\lambda (K_{\frac{n-3}{2} , \frac{n+3}{2} }^{+2}) 
< \lambda (K_{\frac{n+1}{2} , \frac{n-1}{2} }^{+2})$, 
a contradiction. 

If $e(T)=2$, then $G$ is a subgraph of $K_{\frac{n+3}{2} , \frac{n-3}{2} }^{+2}$. Then we can check that 
$\lambda (K_{\frac{n+3}{2} , \frac{n-3}{2} }^{+2})$ 
is the largest root of 
\[ f_6(x):= 15/4 - 2 n + n^2/4 + (9 x)/4 - (n^2 x)/4 - x^2 + x^3.  \]
It is easy to verify that $f_6(x) - g(x)= 2x+2 >0$ 
for any $x>0$.  
Applying Lemma \ref{lem-roots},
 we get  $ \lambda (K_{\frac{n+3}{2} , \frac{n-3}{2} }^{+2}) <   \lambda (K_{\frac{n+1}{2} , \frac{n-1}{2} }^{+2})$, a contradiction. 
\end{proof}

\section{Concluding remarks}
\label{sec7}

In this paper, we have provided a stability variant of 
a spectral counting result due to Ning and Zhai. 
Moreover, we have proved a spectral counting 
result for the bowtie, which solves an open problem 
proposed by the authors in \cite{2022LLP}.  
 Next,  we are going to conclude some spectral extremal graph problems concerning the number of 
 triangles and bowties. 
 
 \medskip 
 
An old result of Nosal \cite{Nosal1970} 
(see \cite{Niki2002cpc,Ning2017-ars}) 
states that every graph $G$ with
$m$ edges and $\lambda (G) > \sqrt{m}$ 
has a triangle.
In 2023, Ning and Zhai \cite{NZ2021}
proved further that if
$  \lambda (G)\ge \sqrt{m}$,
then $G$ contains at least $\lfloor \frac{\sqrt{m}-1}{2} \rfloor$  triangles, unless $G$ is a complete bipartite graph. 
This result is tight as evidenced by adding an edge to the larger vertex part of $K_{a,b}$, where $a=2(\sqrt{m} +1)$ and $b=\frac{\sqrt{m}-1}{2}$. 
Inspired by this result, 
we conjecture that the number of triangles is close 
to be doubling under the same constraint $\tau_3(G)\ge 2$ as in Theorem \ref{thm-n-2}. 

\begin{conjecture} \label{conj-71}
If $G$ is an $m$-edge graph with $\lambda (G)> 
 \sqrt{m}$, and there is no vertex contained in all triangles of $G$, 
 then $G$ contains at least $\sqrt{m} - O(1)$ triangles\footnote{We write $f(m)=O(1)$ if there are   positive constants $C$ and $m_0$ such that 
 $f(m)\le C$ for all $m\ge m_0$.}. 
\end{conjecture}

\noindent 
{\bf Remark.} 
The bound in Conjecture \ref{conj-71} 
is asymptotically tight. 
Indeed, for each $b\in \mathbb{N}$, we take 
$a=4b+3$ and $m=ab+2$, then we define $K_{a,b}^{+2}$ as the graph obtained from 
a complete bipartite graph $K_{a,b}$ by embedding  
two disjoint edges into the vertex part of size $a$. 
We can check that $K_{a,b}^{+2}$ has spectral radius 
larger than $\sqrt{m}$, and it contains $2b=\frac{1}{4}
(\sqrt{16m-23} -3) \approx \sqrt{m}$ triangles. 
This example shows the tightness of Conjecture \ref{conj-71}.

\medskip 
Theorem \ref{thm-bowtie}  gives a spectral counting result for the bowtie.  
Recall in Theorem \ref{thm-KMP} that 
 for sufficiently large $n$ and $q=o(n^2)$,  the only way to construct a graph 
with $\mathrm{ex}(n,F_2) + q$ edges and as few bowties as possible 
is to add some edges to $T_{n,2}$.   
 Our result could be regarded as a spectral version  of Theorem \ref{thm-KMP} in the case $q=2$. 
It is meaningful to prove a spectral version for a general $q\ge 3$. 
The first barrier we may encounter is to establish 
a spectral condition corresponding to the 
edge condition $e(G)\ge \lfloor n^2/4 \rfloor+ q$. 
Now, we denote by $Y_{n,2,q}$ the graph obtained from 
$T_{n,2}$ by embedding $q$ pairwise disjoint edges into the vertex part of size $\lceil n/2\rceil$. 
We propose the following conjecture.  

\begin{conjecture}
If $q\ge 3$ and $G$ is a graph with large order $n$ and  
\[  \lambda (G) \ge \lambda (Y_{n,2,q}), \]
then $G$ has at least ${q\choose 2} \lfloor \frac{n}{2}\rfloor$ 
bowties, and $Y_{n,2,q}$ is the unique spectral extremal graph. 
\end{conjecture}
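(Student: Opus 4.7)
The plan is to adapt the supersaturation-stability framework developed for the case $q=2$ in the proof of Theorem~\ref{thm-bowtie}. A direct count shows that $Y_{n,2,q}$ has exactly $\binom{q}{2}\lfloor n/2\rfloor$ bowties: every such bowtie has its centre in the smaller part, and arises by pairing two of the $q$ triangles through a common centre vertex. So suppose, for contradiction, that $G$ is an $n$-vertex graph with $\lambda(G) \ge \lambda(Y_{n,2,q})$ and fewer than $\binom{q}{2}\lfloor n/2\rfloor$ bowties; among such $G$ choose one with maximum spectral radius.

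The first phase is to obtain an approximate bipartition of $V(G)$. For each $v\in V(G)$, any two disjoint edges in $N(v)$ produce a bowtie centred at $v$, so writing $\nu_v$ for the matching number of $G[N(v)]$, we have $\sum_v \binom{\nu_v}{2} \le \binom{q}{2}\lfloor n/2\rfloor$. Cauchy--Schwarz then yields $\sum_v \nu_v = O(qn)$, and the Erd\H{o}s--Gallai edge bound $e(N(v)) \le \nu_v \cdot d(v)$ gives $t(G) = \tfrac{1}{3}\sum_v e(N(v)) = O(qn^2)$. A routine quotient-matrix calculation shows $\lambda(Y_{n,2,q}) > n/2$, so Lemma~\ref{thm-BN-CFTZ-NZ} forces $e(G) \ge \lfloor n^2/4\rfloor - O(qn)$. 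Theorem~\ref{thm-far-bipartite} then produces a vertex partition $V(G) = S \cup T$ with $e(S) + e(T) = O(qn)$, $e(S,T) = \lfloor n^2/4\rfloor - O(qn)$, and $|S|, |T| = n/2 \pm O(\sqrt{qn})$.

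The second phase refines the structure, following Lemmas~\ref{Lupper}--\ref{SorT=2}. I would introduce the low-degree set $L$ and the irregular set $W$ relative to the bipartition with the analogous thresholds (now scaled by $q$), prove $W \subseteq L$ via a two-triangle-in-a-common-neighbourhood argument, and use the maximum-eigenvector-entry swap at the vertex $z$ with $x_z = 1$ to force $L = \varnothing$. A generalisation of Lemma~\ref{lem-one-edge} then shows that $G[S]$ and $G[T]$ contain no $K_{1,2}$ and no matching of size $q+1$, hence are each a disjoint union of at most $q$ edges. A second application of Lemma~\ref{thm-BN-CFTZ-NZ}, now exploiting the sharper bound $t(G) \le (e(S)+e(T))(n/2 + O(\sqrt q))$, tightens the bipartition to $e(S,T) \ge \lfloor n^2/4\rfloor - O(q)$ and $|S|, |T| = n/2 \pm O(\sqrt q)$, and pins down $e(S) + e(T) = q$ exactly (otherwise $\lambda(G) < \lambda(Y_{n,2,q})$ by a direct spectral comparison as in Lemma~\ref{lem-EST2}).

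The hardest step will be the concluding case analysis. Modelled on Lemma~\ref{SorT=2} and the enumeration completing the proof of Theorem~\ref{thm-bowtie}, one must rule out every configuration in which the $q$ disjoint edges are distributed across $S$ and $T$ other than all in the larger part, and every pair $(|S|,|T|)$ within the allowed window other than $(\lceil n/2\rceil, \lfloor n/2\rfloor)$. The main obstacle is that the number of competing equitable-partition quotient matrices grows polynomially in $q$, producing characteristic polynomials of increasing complexity; however, the leading $n$-terms of each comparison polynomial should favour $Y_{n,2,q}$, so Lemmas~\ref{lem-root} and~\ref{lem-roots} should reduce the finitely many needed inequalities to ones verifiable for all $n$ sufficiently large as a function of $q$. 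A secondary difficulty is that, for $q \ge 3$, there is no clean analogue of the Alon--Shikhelman bound (Lemma~\ref{lem-AS}); the matching-based triangle estimate is substantially cruder than in the $q=2$ case, so it may be necessary to iterate the supersaturation-stability step once more to obtain an approximate bipartition sharp enough to drive the structural phase through.
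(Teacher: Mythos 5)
The statement you were asked to prove is \emph{Conjecture}~7.2 of the paper: the authors do not prove it, and in fact a footnote only states that they have ``a premonition'' that the supersaturation--stability method might handle small~$q$. So there is no paper proof to compare against, and your submission should be judged as a proof attempt of an open problem.

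As a proof, what you have written does not close the argument; it is a programme that mirrors the paper's $q=2$ proof of Theorem~\ref{thm-bowtie}, and you yourself flag the places where it is incomplete. The most serious gap is the terminal case analysis. For $q=2$ the paper finishes by literally enumerating every competitor graph (Lemmas~\ref{lem-plus2} and~\ref{lem-plus1}, and the case split in Section~6.4), checking each characteristic polynomial by hand. For general $q$ the set of competitors --- each choice of $|S|\in[n/2-O(\sqrt q),\,n/2+O(\sqrt q)]$ together with each way to distribute $q$ disjoint class-edges across $S$ and $T$ and each pattern of missing cross-edges --- is unbounded in $q$, and you give no uniform mechanism to settle all these spectral comparisons; asserting that ``the leading $n$-terms should favour $Y_{n,2,q}$'' is a hope, not an argument. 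You would at minimum need a single parametrised quotient-matrix comparison (in $s,q_1,q_2$, with $q_1+q_2=q$) proving the relevant root inequalities for all admissible parameters, and you have not produced or even outlined it. A second, smaller gap is that you must also show that any graph surviving the structural phase but \emph{not} equal to $Y_{n,2,q}$ actually has $\ge \binom{q}{2}\lfloor n/2\rfloor$ bowties --- in the $q=2$ proof this emerges from the explicit case check, but you never address it for general $q$.

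A point where you overstate the difficulty: your worry that ``there is no clean analogue of the Alon--Shikhelman bound'' is unfounded. If $G$ has fewer than $\binom{q}{2}\lfloor n/2\rfloor$ bowties and one takes $k$ with $\binom{k}{2}>\binom{q}{2}\lfloor n/2\rfloor$, i.e.\ $k=\Theta(q\sqrt n)$, then $G$ is $F_k$-free (any copy of $F_k$ supplies $\binom{k}{2}$ bowties), and Lemma~\ref{lem-AS} directly yields $t(G)=O(q^2n^2)$, exactly the scaling you want. So the first phase does go through; it is the finishing enumeration, not the initial triangle bound, that is the genuine obstacle. Your computation that $Y_{n,2,q}$ itself has exactly $\binom{q}{2}\lfloor n/2\rfloor$ bowties is correct and worth keeping, but on its own it only verifies tightness of the conjectured bound, not the bound itself.
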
 

\noindent 
{\bf Remark.}  
We have a premonition that,  
with carefully structural analysis of the extremal graph, the supersaturation-stability method employed in this article could potentially be applicable to investigating this counting problem,  
particularly for a small integer $q$.  

\medskip 
We end this paper with a problem involving books.  
The booksize $bk(G)$ of a graph $G$ is defined as the largest number of triangles 
that share an edge.  An old conjecture of Erd\H{o}s, 
confirmed by Edwards (unpublished)
and independently by 
Khad\v{z}iivanov and Nikiforov (unavailable), 
asserts that every graph $G$ of order $n$ with more than $n^2/4$ edges 
has $bk(G)> n/6$. 
 We refer the readers to \cite[Lemma 4]{EFR1992} for the history 
 and  \cite{BN2005,LFP2024-triangular} for alternative proofs.   
In 2023, 
Zhai and Lin \cite{ZL2022jgt} proved that whenever $\lambda (G)\ge \lambda (T_{n,2})$, we have $bk(G)> 2n/13$, unless $G=T_{n,2}$. Furthermore, they proposed the following problem. 

\begin{problem}[Zhai--Lin, 2023] \label{prob-ZL}
For an arbitrary positive integer $n$, if $G$ is a graph of order $n$ with $\lambda (G)> \lambda (T_{n,2})$, 
is it true that $bk(G)> {n}/{6}$?  
\end{problem}

\noindent 
{\bf Remark.} 
Very recently, the present authors \cite{LFP2024-triangular} provided a short  proof of the Erd\H{o}s conjecture 
by using Theorem \ref{thm-far-bipartite}. 
In the concluding remarks of \cite{LFP2024-triangular}, 
we pointed out that a possible way to solve Problem \ref{prob-ZL} is to establish 
a spectral version of Theorem \ref{thm-far-bipartite}. 

\medskip 
In the other direction, Nikiforov \cite{Niki2021} 
showed that an $m$-edge graph $G$ with $\lambda (G)\ge \sqrt{m}$ 
has $bk(G)> \frac{1}{12}\sqrt[4]{m}$, unless $G$ is a  complete bipartite graph. 
Recently, Ning and Zhai \cite{NZ2021b} 
studied the spectral supersaturation for $4$-cycles 
by showing that $\# C_4 \ge \frac{1}{2000}m^2$ is provided by $\lambda (G)> \sqrt{m}$. 
Furthermore, they proposed a problem on the precise counting of $4$-cycles.   
In a forthcoming paper, the first author together with Hong Liu and Shengtong Zhang will improve Nikiforov's result and 
resolve Ning--Zhai's problem. Moreover, we 
show some related spectral results 
on counting other substructures. $\heartsuit$

\section*{Acknowledgements} 
The authors  are grateful to Xiaocong He and Loujun Yu for proofreading the early version of this paper. The authors also would like to express their sincere gratitude to the anonymous referees for their careful reading and invaluable suggestions. 
This paper is equally contributed. 
Yongtao Li was supported by the Postdoctoral Fellowship Program of CPSF (No. GZC20233196), 
 Lihua Feng was  the NSFC grant (Nos. 12271527 and 12471022), 
 and Yuejian Peng was supported by the NSF of Hunan Province (No. 2025JJ30003) and 
the NSFC grant (Nos. 11931002 and 12371327).  
This work was also partially supported by the NSF of Hunan Province  (2023JJ30180) and  the NSFC grant (No. 12201202).

\frenchspacing

\end{document}